\definecolor{blue}{rgb}{0,0,0.7}
\definecolor{red}{rgb}{0.75, 0, 0}
\newtheorem{theorem}{Theorem}[section]
\newtheorem{theorem-definition}[theorem]{Theorem-Definition}
\newtheorem{theorem-construction}[theorem]{Theorem-Construction}
\newtheorem{lemma-definition}[theorem]{Lemma--Definition}
\newtheorem{lemma-construction}[theorem]{Lemma--Construction}
\newtheorem{lemma}[theorem]{Lemma}
\newtheorem{proposition}[theorem]{Proposition}
\newtheorem{corollary}[theorem]{Corollary}
\newtheorem{conjecture}[theorem]{Conjecture}
\newtheorem{definition}[theorem]{Definition}
\newcommand{\old}[1]{}
\newcommand{\Z}{{\mathbb Z}}
\newcommand{\R}{{\mathbb R}}
\newcommand{\Q}{{\mathbb Q}}
\newcommand{\C}{{\mathbb C}}
\newcommand{\G}{{\rm G}}
\newcommand{\lms}{\longmapsto}
\newcommand{\lra}{\longrightarrow}
\newcommand{\hra}{\hookrightarrow}
\newcommand{\be}{\begin{equation}}
\newcommand{\ee}{\end{equation}}
\newcommand{\bt}{\begin{theorem}}
\newcommand{\et}{\end{theorem}}
\newcommand{\bd}{\begin{definition}}
\newcommand{\ed}{\end{definition}}
\newcommand{\bp}{\begin{proposition}}
\newcommand{\ep}{\end{proposition}}
\newcommand{\bl}{\begin{lemma}}
\newcommand{\el}{\end{lemma}}
\newcommand{\bc}{\begin{corollary}}
\newcommand{\ec}{\end{corollary}}
\newcommand{\bcon}{\begin{conjecture}}
\newcommand{\econ}{\end{conjecture}}
\newcommand{\la}{\label}
\begin{document}
\date{\it To Vadim Schechtman on the occasion of his 60th birthday}

\title{Exponential complexes, period morphisms,  and  characteristic classes }

\author{A. B.  Goncharov}
\maketitle

\tableofcontents

\begin{abstract}
We introduce a {\it weight $n$ exponential complex}  
of sheaves $\Q^\bullet_{\bf E}(n)$ on a manifold $X$: 
\begin{equation} \label{ana1eqa}
{\cal O} (n-1) \lra {\cal O}^*  \otimes {\cal O} (n-2) \lra ... \lra  
\otimes^{n-1}{\cal O} ^*\otimes {\cal O}  
    \lra \otimes^{n }{\cal O}^*.
\end{equation} 
It is a resolution 
 of the constant sheaf 
$\Q(n)$, generalising the 
classical  exponential sequence:
$$
\Z(1)  \lra {\cal O}  \stackrel{\rm exp}{\lra} {\cal O}^*,
~~~~ \Z(1):= 2\pi i\Z.
$$

There is a canonical map from the complex $  \Q^\bullet_{\bf E}(n)$ to the de Rham complex 
$\Omega^\bullet$ of $X$. Using it, we define 
a {\it weight $n$ exponential Deligne  complex}, calculating rational Deligne cohomology:
$$
\Gamma_{\cal D}(X; n):= {\rm Cone}\Bigl(
\Q^\bullet_{\bf E}(n) \oplus F^{\geq n}\Omega^\bullet \lra \Omega^\bullet\Bigr)[-1]. 
$$
Its main advantage is that, at least at the generic point  
${\cal X}$ of a complex variety $X$, it allows to define Beilinson's 
regulator map to the rational Deligne cohomology on the level of complexes. 
(A regulator map to real Deligne complexes for any regular complex variety 
 is known  \cite{G95}).

Namely, we define a weight $n$ {\it period morphism}. 
We use it to define 
a map of complexes
\be \la{4.22.15.2a}
\mbox {\it a weight $n$ motivic complex of ${\cal X}$} \lra  \mbox{\it the weight $n$ exponential complex 
of ${\cal X}$}. 
\ee
 We show that it gives rise to a map of complexes
\be \la{4.22.15.2a1}
\mbox {\it a weight $n$ motivic complex of ${\cal X}$} \lra   
\mbox {\it the weight $n$ exponential Deligne  complex of ${\cal X}$}.
\ee
It induces Beilinson's regulator map  on the cohomology.

\vskip 2mm
Combining  the map (\ref{4.22.15.2a1}) with the construction of Chern classes 
with coefficients in the bigrassmannian complexes \cite{G93}, we get 
a local explicit formula for the $n$-th Chern class in the rational Deligne cohomology via polylogarithms, 
at least 
for $n \leq 4$. 
Equivalently, we get an explicit construction for the 
universal Chern class in the rational  Deligne cohomology
$$
c_n^{\cal D}\in H^{2n}(BGL_N(\C), \Gamma_{\cal D}(n)), ~~~~n\leq 4.
$$
In particular, this gives 
explicit formulas for  Cech cocycles for the topological Chern classes.

\end{abstract}
\section{Introduction, main definitions, and examples}

\subsection{A motivation: local construction of Chern classes} 

Topological invariants can often be localised by introducing  additional 
structures of local nature. 

For example, the topological Chern classes 
of a vector bundle $E$ on a manifold can be localised by introducing a connection $\nabla$ on $E$:  
the differential form $(2\pi i)^{-n}{\rm Tr}(F_\nabla^n)$, where $F_\nabla$ is the curvature of 
 $\nabla$, is a de Rham representative
 of the Chern class $c_n(E)$.

In this paper we address the problem of a local construction of explicit Cech cocycles 
representing the Chern classes. 
A construction 
of Chern classes with values in the bigrassmannian complex was given in \cite{G93}. 
To get from there a local formula for  topological Chern classes, or    
Chern classes in the rational  
Deligne cohomology,  
one needs a transcendental construction relying on polylogarithms. 
It should handle the complicated multivalued nature of  polylogarithms. 

We develop such a   construction. 
We define a weight $n$ exponential complex, 
which is a resolution of the constant sheaf $  \Q(n)$ 
on a manifold. Using it, we define a new complex calculating the rational Deligne cohomology, 
and construct a period morphism, which gives rise to 
a regulator map on the level of complexes at the generic point of a complex algebraic variety. 
Yet, more work needs to be done to find  
a local construction of the Chern classes $c_n(E)$ when $n>4$. 

Let us now look at the problem in detail in the simplest possible case. 

\paragraph{1. The first Chern class.} Let $E$ be a complex line bundle on a real manifold $X$. 
Here is  a classical construction of a Cech cocycle representing the first Chern class 
$$
c_1(E) \in H^2(X, \Z(1)).
$$ Take a cover of $X$ by  
open subsets $U_i$ such that all intersections $U_{i_0} \cap ... \cap U_{i_k}$ are empty or contractible. The restriction of $E$ to $U_i$ is trivial, so we may choose a nonvanishing section $s_i$. The ratio $s_i/s_j$ is an invertible function on $U_i \cap U_j$.  
Choose a branch of $\log (s_i/s_j)$. Then there 
is a $2$-cocycle in the  Cech  complex of the cover, whose cohomology class is $c_1(E)$:
$$
U_i \cap U_j \cap U_k \lms \log (s_i/s_j) - \log (s_j/s_k) + \log (s_k/s_i) \in 2 \pi i\Z.
$$
Equivalently, take the short exact exponential sequence of sheaves on 
$X$, where ${\cal O}$ is the structure sheaf of continuous complex valued functions:
$$
\Z(1) \lra {\cal O} \stackrel{\rm exp}{\lra} 
{\cal O}^*.
$$
Then the above construction just means  the following: 

1. We assign to a complex line bundle $E$ on  $X$ a Cech cocycle 
representing its class 
$$
{\rm cl}(E) \in H^1(X, {\cal O}^*).
$$

2. We calculate  the coboundary map in the exponential complex:
$$
\delta: H^1(X, {\cal O}^*) \lra H^2(X, \Z(1)). 
$$
Then $$
\delta({\rm cl}(E)) = c_1(E)\in H^2(X, \Z(1)).
$$

For an arbitrary vector bundle $E$, 
$c_1(E) := c_1({\rm det}(E))$. 
The construction works the same way for complex  manifolds.

The first step is algebraic: 
the class ${\rm cl}(E) \in H^1(X, {\cal O}^*)$  makes 
sense in Zariski topology. 

The second step is transcendental. 
The very existence of the integral class $c_1(E)$ reflects the failure 
of the complex logarithm  $\log (z)$ to satisfy the functional 
equation. And yet  the functional equation  $\log (xy) = \log x + \log y$ 
is satisfied on the real positive axis, and determines the logarithm uniquely.  

Our starting point  was the following problem: 
\vskip 2mm
{\it Find similar in spirit "local  formulas" for all Chern classes  
 of  a vector bundle on $X$}.
\vskip 2mm

\paragraph{2. The second Chern class.} The next is a local formula for the second Chern class. 
It is much deeper. We  discuss in Section \ref{sec1.2} the case when 
the vector bundle is two-dimensional - the case of an arbitrary vector bundle 
requires additional ideas, and postponed till Section 4. 

Here we see a similar phenomenon: the  local  formula  for the 
second Chern class in $H^4(X, \Z(2))$ requires the dilogarithm function, 
and  reflects all its beautiful properties at once:

\begin{itemize}

\item The monodromy of the dilogarithm.

\item The differential equation of the dilogarithm.

\item Abel's five term relation, or better the failure of the 
\underline{complex} dilogarithm  to satisfy it. Yet, the five term relation 
on the \underline{real} positive locus is clean, and 
determines the  dilogarithm. 

\end{itemize}

The relevance of the real dilogarithm for the first Pontryagin class  was discovered 
 by Gabrielov, Gelfand and Losik \cite{GGL}. Few years later,  
the relevance of the complex dilogarithm for the 
codimension two algebraic cycles  and regulators was discovered by Spencer 
Bloch \cite{B77}, \cite{B78}. 

Our formula for the second Chern class of a two-dimensional vector bundle 
is in the middle. 

\vskip 3mm

The construction of the universal second motivic Chern class from \cite{G93} had several 
applications in low dimensional geometry and mathematical physics, 
e.g. \cite{FG1}. It provides  a motivic point of view on the Chern-Simons theory. 
It is of cluster nature, 
and can be quantised using the quantum dilogarithm \cite{FG2}. 
The present paper just clarifies its Hodge-theoretic aspect. 

The local formula for the third motivic Chern class has the same 
level of precision. In particular it is of cluster nature. 
However, strangely enough, it did not have any application in geometry yet. 
Its quantisation is a tantalising open problem.


\subsection{Exponential complexes}

\bd The weight $n$  exponential complex $  \Q_{\bf E}^{\bullet}(n)$  is the following complex 
    of sheaves  
on a manifold $X$, concentrated in degrees $[0,n]$:
 \begin{equation} \label{ana1e}
{\cal O} (n-1) \lra {\cal O}^*  \otimes {\cal O} (n-2) \lra ... \lra  
\otimes^{n-1}{\cal O} ^*\otimes {\cal O}  
    \lra \otimes^{n }{\cal O}^*.
\end{equation} 
The differential is  
\begin{equation} \label{ana1*}
d:   \underbrace {{\cal O}^*\otimes \ldots \otimes{\cal O}^*}_{\text {k-1 times}}  \otimes 
{\cal O} \otimes 
\underbrace{2\pi i \otimes \ldots \otimes 2\pi i}_{\text {n-k times}}\lra 
 \underbrace {{\cal O}^*\otimes \ldots \otimes{\cal O}^*}_{\text {k times}} \otimes {\cal O} \otimes 
\underbrace{2\pi i \otimes \ldots \otimes 2\pi i}_{\text {n-k-1 times}}, 
\ee
\be
\begin{split}
&a_1 \otimes ... \otimes a_{k-1} \otimes b \otimes \underbrace{2\pi i \otimes \ldots \otimes 2\pi i}_{\text {n-k times}}   
\lms \\
&a_1 \otimes ... \otimes  a_{k-1} \otimes  {\rm exp} (b) 
\otimes    2\pi i\otimes\underbrace{2\pi i \otimes \ldots \otimes 2\pi i}_{\text {n-k-1 times}}. 
\end{split}
\ee
\ed

To check that we get a complex, observe that each map $d^2$ involves a factor ${\rm exp}(2\pi i) =1$: 
$$
\ldots  \otimes b \otimes  2\pi i \otimes 2\pi i \ldots \stackrel{d}{\lra} 
\ldots \otimes {\rm exp}(b) \otimes 2\pi i \otimes 2\pi i \ldots \stackrel{d}{\lra} 
\ldots {\rm exp}(b) \otimes {\rm exp}(2\pi i) \otimes 2\pi i   \ldots.
$$

 For example,  $  \Z_{\bf E}^{\bullet}(1)$ 
is the classical exponential resolution of $  \Z(1)$.

The 
complex $  \Q_{\bf E}^{\bullet}(2)$ looks as follows:
$$
{\cal O}(1) \lra  {\cal O}^* \otimes  {\cal O}
    \stackrel{}{\lra}  {\cal O}^* \otimes  {\cal O}^*.
$$
$$
b \otimes 2\pi i \lms {\rm exp}(b) \otimes 2\pi i, ~~~~a \otimes b \lms a \otimes {\rm exp}(b).
$$

The map $  \Q(n) \hra {\cal O}(n-1)$ gives rise to a map of complexes 
$  \Q(n) \lra   \Q_{\bf E}^{\bullet}(n)$. The cone of this map is acyclic. 
So the exponential complex is a resolution of the constant sheaf $  \Q(n)$. 

The holomorphic de Rham complex  on a complex manifold $X$ 
is a resolution of the constant sheaf ${ \C}$:
\be \la{DCOMP}
\Omega^\bullet:= ~~\Omega^0   \stackrel{d}{\lra}   \Omega^1  
  \stackrel{d}{\lra}  \Omega^{2} \stackrel{d}{\lra} ...  
\ee
Let $X$ be a regular complex algebraic variety.  
Take  a compactification $\overline X$ of $X$ such that ${\rm D}:= \overline X-X$ 
is a normal crossing divisor. The 
de Rham complex $\Omega_{\log}^\bullet$ of forms with logarithmic singularities at infinity 
is a complex of sheaves in the classical topology on $X$, given by 
the forms with logarithmic singularities at ${\rm D}$.

The canonical embedding 
$ {\Q}(n) \hra  {\C}$  
gives rise to a canonical morphism of the resolutions 
\be \la{OMEGA}
\Omega_n^{(\bullet)}: {  \Q}^\bullet_{\bf E}(n) ~~\lra ~~\Omega_{\rm log}^\bullet
\ee
defined in the next Lemma. 

\bl 
There is 
a canonical morphism of  complexes of sheaves on $X$: 
$$
\begin{array}{ccccccc}
 {\cal O}(n-1) &    \lra & {\cal O}^*\otimes {\cal O}(n-2) &    \lra ... \lra & \otimes^{n-1}   {\cal O}^* \otimes {\cal O} & \lra&\otimes^{n }   {\cal O}^*\\
&&&&&& \\
\downarrow \Omega^{(0)}_n  &&\downarrow \Omega^{(1)}_n  
&  ... &\downarrow \Omega^{(n-1)}_n&& \downarrow \Omega^{(n )}_n\\
&&&&&& \\
 \Omega^0  & \stackrel{d}{\lra} &\Omega_{\rm log}^1  &    
 \stackrel{d}{\lra} ... \stackrel{d}{\lra} &\Omega_{\rm log}^{n-1}  & \lra &  \Omega^{n}_{{\rm log},  cl}
\end{array}
$$ 
Here 
$$
\Omega^{(m)}_n( (2\pi i)^{n-m-1} \cdot f_1 \otimes ... \otimes f_m\otimes g) := 
$$
$$
(2\pi i)^{n-m-1} 
  (-1)^mg \cdot d \log f_1 \wedge ... \wedge d \log f_m,   \qquad  m<n,
$$ 
$$
 \Omega^{(n )}_n(f_1 \otimes 
... \otimes f_n) := (-1)^nd \log  f_1  \wedge ... \wedge d \log f_n.
$$
\el

\subsection{Exponential Deligne complexes.} 

Let $X$ be a complex manifold. 
Consider a subcomplex  of the holomorphic de Rham complex:
\be \la{HF}
F^n\Omega^{\bullet} := \Omega^n \to \Omega^{n+1} \to 
\ldots \subset \Omega^{\bullet}.
\ee
 The  weight $n$ rational Deligne complex  on $X$ is defined as a complex of sheaves
\be \la{DCOMPas}
{  \Q}_{{\cal D}}(n): = {\rm Cone}\Bigl({  \Q}(n) \oplus F^n\Omega^{\bullet} \lra  
\Omega^{\bullet}\Bigr)[-1]. 
\ee 
Complex (\ref{DCOMPas})   is quasiisomorphic to 
\be \la{DCOMP21}
{  \Q}(n) \hookrightarrow  \Omega^0   \stackrel{d}{\lra}   \Omega^1  
  \stackrel{d}{\lra} ... \stackrel{d}{\lra}  \Omega^{n-1}.
\ee

Let $X$ be a regular complex algebraic variety.  
The Beilinson-Deligne complex ${  \Q}_{{\cal D}}(n)$ \cite{B84} is
a complex of sheaves in the classical topology on $X$ given 
by the total complex 
of the bicomplex
$$
\begin{array}{ccccccccccccccc}
&&&&{  \Q}(n)&&&&&&\Omega^n_{\log}&\stackrel{d}{\lra}& \Omega^{n+1}_{\log}&\stackrel{d}{\lra}&\ldots\\
{  \Q}_{{\cal D}}(n):=&&&&\downarrow &&&&&&\downarrow =&&\downarrow =&&\\
&&&&\Omega_{\log}^0& \stackrel{d}{\lra}&\Omega_{\log}^1& \stackrel{d}{\lra}&\ldots&\stackrel{d}{\lra} &\Omega_{\log}^n& \stackrel{d}{\lra}&\Omega_{\log}^{n+1}&\stackrel{d}{\lra}&\ldots
\end{array}
$$

\bd The weight $n$ exponential Deligne complex is a complex 
\be \la{DCOMPas1}
{  \Gamma}_{{\cal D}}(n): = 
{\rm Cone}\Bigl({  \Q}_{\bf E}^{\bullet}(n) \oplus F^n\Omega^{\bullet}_X \lra  \Omega^{\bullet}_X\Bigr)[-1] 
\ee 
obtained by replacing ${  \Q}(n)$ in (\ref{DCOMPas}) by  its exponential resolution 
${  \Q}_{\bf E}^{\bullet}(n)$, and using the map  (\ref{OMEGA}).
\ed

For example, when $n=2$ we get the total complex of the following bicomplex: 
 $$
\begin{array}{ccccccccccccc}
&&{\cal O}(1)&\lra &{\cal O}^*\otimes {\cal O}&\lra 
&{\cal O}^*\otimes {\cal O}^*&\bigoplus& \Omega^2_{\log}&\stackrel{d}{\lra}& 
\Omega^{3}_{\log}&\stackrel{d}{\lra}&\ldots\\
&&&&&&&&&&&\\
  \Gamma_{\cal D}(2):=&&\downarrow \Omega^{(0)}_2 &&\downarrow \Omega^{(1)}_2&&~~~~~~~~\Omega^{(2)}_2\searrow &&\swarrow = ~ ~~~~~~~
&&\downarrow =&&\\
&&&&&&&&&&&&\\
&&\Omega^0& \stackrel{d}{\lra}&\Omega_{\log}^1& \stackrel{d}{\lra}&
&\Omega_{\log}^2&&\stackrel{d}{\lra} &\Omega_{\log}^{3}&\stackrel{d}{\lra} &\ldots
\end{array}
$$ 
The quotient of  complex (\ref{DCOMPas1}) by the acyclic subcomplex 
$
{\rm Cone}\Bigl(F^n\Omega^{\bullet}_X \lra  F^n\Omega^{\bullet}_X\Bigr)[-1] 
$ 
is a quasiisomorphic complex

$$
\begin{array}{cccccccc}
&{\cal O}(n-1)&\lra &{\cal O}^*\otimes {\cal O}(n-2)&\lra \ldots \lra 
&{\otimes^{n-1}}{{\cal O}^*}
\otimes {\cal O}& \lra &{\otimes^n}{{\cal O}^*}\\
&\downarrow &&\downarrow &&\downarrow &&  \\
&\Omega^0& \stackrel{d}{\lra}&\Omega_{\log}^1& \stackrel{d}{\lra} \ldots 
\stackrel{d}{\lra}&\Omega_{\log}^{n-1}&& 
\end{array}
$$

\subsection{Period morphisms} 

Recall the basic fact, reviewed in Section 2.1,  that the 
equivalence classes of variations of framed mixed $\Q$-Hodge-Tate structures on a complex manifold $X$ 
give rise to a sheaf of graded commutative Hopf algebras over $\Q$: 
$$
  {\cal H}_\ast = \bigoplus_{n=0}^\infty{\cal H}_n. 
$$
One has ${\cal H}_0=\Q, ~{\cal H}_1={\cal O}^*_\Q:= {\cal O}^*\otimes \Q$. 
The reduced coproduct $\Delta':   {\cal H}_{>0} \lra \otimes^2  {\cal H}_{>0}$
 give rise to the reduced cobar complex, graded by the weight:
$$
  {\cal H}_{>0}   \stackrel{\Delta'}{\lra} 
  {\cal H}_{>0} \otimes   {\cal H}_{>0}    \stackrel{\Delta'}{\lra} ... 
\stackrel{\Delta'}{\lra}  \otimes^n   {\cal H}_{>0}. 
$$
In Section 3 we present our main  construction,  valid in the category of complex manifolds:  
\begin{theorem} \label{periods1A}
There is a canonical map of complexes of sheaves, called the \underline{period morphism}:
\be \la{4.22.15.1A}
\mbox {\it the weight $n$ part of cobar complex 
of $\underline {\cal H}_\ast$} \lra \mbox{\it the weight $n$ exponential complex 
$  \Q_{\bf E}^{\bullet}(n)$}. 
\ee
In a more elaborate form, it looks as follows: 
\be \la{REDVEDC}
\begin{array}{ccccccc}
&&{  {\cal H}}_n  & \stackrel{\Delta'}{\lra} & 
({  {\cal H}_{>0} } \otimes {  {\cal H}_{>0} } )_n  & \stackrel{\Delta'}{\lra} ... 
\stackrel{\Delta'}{\lra} & \otimes^n {  {\cal H}}_1 \\
&&&&&&\\
&&\downarrow P_n^{1} &&\downarrow P_n^{2}   &&=\downarrow P_n^{n} \\
&&&&&&\\
 {\cal O}(n-1) &\lra &{\cal O}^* \otimes {\cal O}(n-2) & \lra &   \otimes^2  {\cal O}^* \otimes {\cal O}(n-3) 
& \lra... \lra & \otimes^n  {\cal O}^*_\Q\\
\end{array}
\ee
The map (\ref{REDVEDC}) has the following properties: 
\begin{enumerate}

\item After the identification  
${  {\cal H}}_1 = {\cal O}^*_\Q $ the map $P_n^{n}$ is the identity map.

\item The map $P_n^{1}$ is the { big period map} from \cite{G96}. 
 
\item The composition   $\Omega_n^{k}\circ  P_n^{k}$ is zero unless $k=n$, i.e. 
everywhere except on the very right. 
\end{enumerate} 
\end{theorem}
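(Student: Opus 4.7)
The plan is to define each $P_n^k$ via an explicit formula involving the big period map $\beta_w \colon {\cal H}_w \to {\cal O}(w-1)$ from \cite{G96} together with the weight-$1$ projection $\pi_1 \colon {\cal H}_{>0} \to {\cal H}_1 = {\cal O}^*_\Q$, and then to verify the three listed properties as well as chain-map compatibility with the differentials.

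Recall that for a framed weight-$w$ mixed $\Q$-Hodge--Tate structure $V$ on an open $U \subset X$, the big period $\beta_w(V) \in {\cal O}(w-1)$ is obtained locally from a holomorphic splitting of the Hodge filtration by reading off the matrix coefficient between the two framing vectors across the Hodge-anti-diagonal; its key feature is a coproduct-compatibility identity describing its behavior under changes of splitting in terms of $\beta$ applied to the pieces of $\Delta' V$. For a simple tensor $h_1 \otimes \cdots \otimes h_k \in ({\cal H}_{>0}^{\otimes k})_n$ of weights $w_1, \ldots, w_k$ summing to $n$, I would define
\[
P_n^k(h_1 \otimes \cdots \otimes h_k) := \pi_1(h_1) \otimes \cdots \otimes \pi_1(h_{k-1}) \otimes (2\pi i)^{n-k-(w_k - 1)} \beta_{w_k}(h_k),
\]
with the convention that at $k = n$ the formula collapses (all $w_i = 1$ are forced) into the identity map to $\otimes^n {\cal O}^*_\Q$, rather than applying $\beta_1$. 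Since $\pi_1$ annihilates ${\cal H}_w$ for $w \geq 2$, only the weight decomposition $(1, \ldots, 1, n-k+1)$ contributes, and the exponent $n-k-(w_k - 1) = 0$ is automatic for surviving terms.

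Properties 1 and 2 are then built into the definition. Property 3 holds for $k < n$ because the last slot of $P_n^k$ lies in ${\cal O}(n-k)$ with $n - k \geq 1$, so its image under $\Omega_n^{(k)}$ contains a factor $d\log(2\pi i) = 0$; equivalently, since $w_k \geq 2$, the differential form extracted is $d\log$ of a constant and vanishes. The main verification is the commutativity
\[
d \circ P_n^k \;=\; P_n^{k+1} \circ \Delta' \qquad \text{for } k < n-1,
\]
and the matching boundary identity at the last step, linking $d \circ P_n^{n-1}$ with $P_n^n \circ \Delta'$ through exponentiation. Both reduce to the coproduct identity for the big period from \cite{G96}: exponentiating $\beta_{w_k}(h_k)$ produces an element of ${\cal O}^*$ whose logarithmic monodromy is described by $\beta_{w_k - 1}$ applied to the last piece of $\Delta' h_k$, multiplied by the $\pi_1$-projection of the first piece. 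The main obstacle is the careful bookkeeping of signs, powers of $2\pi i$, and the combinatorial coefficients of the coproduct across all positions $k$, especially at the boundary $k = n-1 \to n$, where the exponential complex jumps dimension from $\otimes^{n-1} {\cal O}^* \otimes {\cal O}$ to $\otimes^n {\cal O}^*$ and must be reconciled with the fact that $P_n^n$ is just the identity.
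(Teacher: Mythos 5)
Your proposal rests on a misrecollection of the big period map, and this leads to a formula for $P_n^k$ that cannot be a chain map once $n \geq 3$. The big period $P_w$ from \cite{G96} is \emph{not} a map ${\cal H}_w \to {\cal O}(w-1)$ "reading off the matrix coefficient between the two framing vectors." That matrix coefficient is the \emph{period}, and it depends on the choice of rational splitting of the weight filtration, so it does not descend to ${\cal H}_w$. The whole point of the big period is to build something splitting-independent, and it lands in a two-factor tensor:
$$
P'_w : {\cal H}_w \lra \C\otimes_\Q\C, \qquad P'_w(f^w, H, v_0) = \sum_l \langle f^w\,|\,{\cal M}\,|\,v_l\rangle \otimes \langle f^l\,|\,{\cal M}^{-1}\,|\,v_0\rangle,
$$
which after exponentiating the first factor gives $P_w : {\cal H}_w \to {\cal O}^*\otimes {\cal O}(w-2)$ — note the extra ${\cal O}^*$-factor, and note the essential presence of ${\cal M}^{-1}$ in the second slot.

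Because of this, your candidate formula
$$
P_n^k(h_1\otimes\cdots\otimes h_k) = \pi_1(h_1)\otimes\cdots\otimes\pi_1(h_{k-1})\otimes(\text{period of }h_k)
$$
has the wrong structure: it is concentrated on tensors with weight profile $(1,\dots,1,n{-}k{+}1)$ and treats $h_k$ via a single matrix coefficient (or, if one charitably substitutes the genuine big period, a single two-tensor) in the last slot only. The paper's construction is entirely different: it applies the big period $P'$ to \emph{every} $h_i$ (giving a 2-tensor each) and then contracts adjacent factors via the nonassociative $*$-product, so that $P_n^k$ is nonzero on \emph{all} weight profiles. The chain-map identity $d\circ P_n^k = - P_n^{k+1}\circ\Delta'$ is then proved by cancellation between terms coming from ${\cal M}$ and ${\cal M}^{-1}$: exactly the "cases (i), (ii), (iii)" in Step~2 of Section~\ref{secmorper}, where the surviving term has a constant $2\pi i$ in the last ${\cal O}$-slot. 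Your formula cannot reproduce this: already for $n=3$, the expression $d\circ P_3^1(H)$ has the \emph{constant} $2\pi i$ in its ${\cal O}(n-3)$-slot, whereas $(\pi_1\otimes P_2)\circ\Delta'_{2,1}(H)$ has the \emph{nonconstant} function $2\pi i\langle f^j\,|\,{\cal M}^{-1}\,|\,v_0\rangle$ there; these do not agree. (The coincidence you checked implicitly at $n=2$ is special: in weight 2 there is only one intermediate basis level, $\langle f^1|{\cal M}^{-1}|v_0\rangle = -\langle f^1|{\cal M}|v_0\rangle$, and the sign flips through in $\Lambda^2{\cal O}^*$.) Finally, the "coproduct identity for the big period" that you invoke as the key lemma is not a statement in \cite{G96}; the true compatibility is the one established via the $*$-product and the ${\cal M}{\cal M}^{-1}=I$ cancellations, and it has a materially different form from what you write.
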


Condition 3) just means that the following composition is zero:
$$
\begin{array}{ccccc}
{  {\cal H}}_n  & \stackrel{\Delta'}{\lra} & 
({  {\cal H}}_{>0} \otimes {  {\cal H}}_{>0})_n  & \stackrel{\Delta'}{\lra} ... 
\stackrel{\Delta'}{\lra} & (\otimes^{n-1} {  {\cal H}}_{>0})_n \\
\downarrow  &&\downarrow    &&\downarrow   \\
 {\cal O}^* \otimes {\cal O}(n-2) & \lra &      {\cal O}^*\otimes   {\cal O}^* \otimes  {\cal O}(n-3)& \lra ... \lra &  
\otimes^{n-1}  {\cal O}^* \otimes{\cal O}\\
\downarrow   &&\downarrow    &&\downarrow   \\
\Omega^1 & \stackrel{d}{\lra} & \Omega^2 &  \stackrel{d}{\lra} ... \stackrel{d}{\lra} &  \Omega^{n-1}
\end{array}
$$

\paragraph{Example: $n=2$.} Then we have a map
$$
\begin{array}{ccccccc}
&&{  {\cal H} }_2 & \lra &  \otimes^2   {  {\cal H} }_1\\
&&\downarrow &&\downarrow =\\
{\cal O}(1)& \lra &  {\cal O}^*\otimes    {\cal O}&  \lra & \otimes^2{\cal O}_\Q^*\\
\end{array}
$$

\paragraph{The period morphism to the exponential Deligne complex.} 
Let us use  period morphism (\ref{REDVEDC}) and its properties 
provided by Theorem \ref{periods1A} to define  
a map of complexes of sheaves
\be \la{CHRMDC1}
\begin{split}
&\mbox{\it the weight $n$ part of the reduced cobar complex of $  {\cal H}_\bullet$} \lra \\
&\mbox{\it the weight $n$ exponential Deligne complex $\Gamma_{\cal D}(X; n)$}. 
\end{split}
\ee
Let us recall that 
\be \la{DCOMPas1a1}
{  \Gamma}_{{\cal D}}(X; n) = 
{\rm Cone}\Bigl({  \Q}_{\bf E}^{\bullet}(X; n) 
\oplus F^n\Omega^{\bullet}_X \lra  \Omega^{\bullet}_X\Bigr)[-1]. 
\ee 
Therefore a map to the complex (\ref{DCOMPas1a1})  has three components: 

\begin{enumerate}

\item The exponential complex 
${  \Q}_{\bf E}^\bullet(X, n)$ component;

\item The Hodge filtration $F^{n}\Omega^{\bullet}$ component;

\item The de Rham complex $\Omega_X^\bullet$ component.
\end{enumerate}

We define these components as follows. 

\begin{enumerate}

\item 
The  
${  \Q}_{\bf E}^\bullet(X, n)$-component is just  
the period morphism: 
$$
\begin{array}{ccccccc}
&&{  {\cal H}}_n  & \stackrel{\Delta'}{\lra} & 
({  {\cal H}_{>0}} \otimes {  {\cal H}_{>0}} )_n  & \stackrel{\Delta'}{\lra} ... 
\stackrel{\Delta'}{\lra} & \otimes^n {  {\cal H}}_1 \\
&&&&&&\\
&&\downarrow P_n^{1} &&\downarrow P_n^{2}   &&=\downarrow P_n^{n} \\
&&&&&&\\
 {\cal O}(n-1) &\lra &{\cal O}^* \otimes {\cal O}(n-2) & \lra &   \otimes^2  {\cal O}^* \otimes {\cal O}(n-3) 
& \lra... \lra & \otimes^n  {\cal O}^*_\Q\\
\end{array}
$$

\item 
The $F^{n}\Omega^{\bullet}$-component 
is given by  the map 
\be \la{2mapcompl}
-\otimes^nd\log: \otimes^n  {\cal H}_1  \lra \Omega_X^n, 
~~~~(f_1, ..., f_n)\lms -d\log f_1 \wedge \ldots \wedge d\log f_n. 
\ee

\item 
The de Rham complex component is  zero. \end{enumerate}

Here is how the map (\ref{CHRMDC1}) looks in the weight two. 
The top row is the weight 2 reduced cobar complex. The second and third rows 
provide us a bicomplex whose total is the  weight two exponential Deligne complex. 
The map is given by the first row of 
vertical arrows:
 $$
\begin{array}{ccccccccccccccc}
&&&&&&{  {\cal H}}_2&\stackrel{\Delta'}{\lra}  & 
&{  {\cal H}}_1 \otimes {  {\cal H}}_1&&&&\\
&&&&&&&&&&&&&\\
&&&&&&&&&&&&&\\
&&&& &&\downarrow P_2^{1} &&~~~~~\swarrow =&&\searrow -\otimes^2 d\log &&&\\
&&&&&&&&&&&&&\\
&&&&&&&&&&&&&\\
&&&&{\cal O}(1)&\lra &{\cal O}^*\otimes {\cal O}&\lra 
&{\cal O}^*\otimes {\cal O}^*&\bigoplus& \Omega_{\rm log}^2&\stackrel{d}{\lra}& 
\Omega_{\rm log}^{3}&\stackrel{d}{\lra}&\ldots\\
&&&&&&&&&&&&&\\
&&&&\downarrow = &&\downarrow \Omega_2^{(1)}&&~~~~~~~~\Omega_2^{(2)}\searrow &&\swarrow =~ ~~~~~~~
&&\downarrow =&&\\
&&&&&&&&&&&&&&\\
&&&&\Omega^0& \stackrel{d}{\lra}&\Omega_{\rm log}^1& \stackrel{d}{\lra}&
&\Omega_{\rm log}^2&&\stackrel{d}{\lra} &\Omega_{\rm log}^{3}&\stackrel{d}{\lra} &\ldots
\end{array}
$$ 
\bt
The map defined by the components 1)-3) is a homomorphism of complexes. 
\et

\begin{proof}
By Theorem \ref{periods1A}, the component 1) is a homomorphism of complexes. 
The component 2) is also a homomorphism of complexes. 
Indeed, the forms in the image of the map (\ref{2mapcompl}) are evidently closed. 
So the statement reduces to the claim that the following composition is zero: 
$$
\Bigl(\otimes^{n-1}  {\cal H}_{>0}\Bigr)_n \stackrel{\Delta'}{\lra} \otimes^n  {\cal H}_1  \lra \Omega^n.
$$
This follows from the $n=2$ case, telling that (see  Theorem 2.11) 
thanks to the Griffith transversality, 
the following composition is zero: 
$$
  {\cal H}_2 \stackrel{\Delta'}{\lra}
   {\cal H}_1\otimes  {\cal H}_1  \lra \Omega^1.
$$
After that the Theorem reduces to properties 1) and 3) of 
the period map in Theorem \ref{periods1A}.
\end{proof}

Alternatively, using the reduced model (\ref{REDVEDC}) for
 the exponential Deligne complex, the homomorphism (\ref{CHRMDC1}) is given  
\underline{just} by the period morphism: 
$$
\begin{array}{ccccccc}
  {\cal H}_n& \stackrel{\Delta'}{\lra}& 
\Bigr(  {\cal H}_{>0} \otimes  {\cal H}_{>0}\Bigl)_n
&\stackrel{\Delta'}{\lra}\ldots \stackrel{\Delta'}{\lra}&
\Bigl(\otimes^{n-1}  {\cal H}_{>0} \Bigr)_n&
\stackrel{\Delta'}{\lra}&\otimes^n   {\cal H}_1\\
&&&&&&\\
\downarrow P_n^{(1)}&&\downarrow P_n^{(2)}&&\downarrow P_n^{(n-1)}&&\downarrow P_n^{(n)}\\
&&&&&&\\
{\cal O}(n-1)&\to &{\cal O}^*\otimes {\cal O}(n-2)&\to \ldots \to 
&{\otimes^{n-1}}{\cal O}^*
\otimes {\cal O}& \to &{\otimes^n}{\cal O}^*_\Q\\
\downarrow &&\downarrow &&\downarrow && \\
\Omega^0& \stackrel{d}{\lra}&\Omega_{\rm log}^1& \stackrel{d}{\lra} \ldots 
\stackrel{d}{\lra}&\Omega_{\rm log}^{n-1}&& 
\end{array}
$$ 

\subsection{A map: Bloch complex $\to$ weight two  
 exponential Deligne complex}

\paragraph{\it The Bloch complex as a ``resolution'' of Milnor's $K_2$.} 
Given a field $F$, the Milnor group $K_2(F)$ is 
the quotient of the group $F^* \otimes F^*$ by a subgroup generated by 
{\it Steinberg relations} $(1-x)\otimes x$ where $x \in F^*-\{1\}$ \cite{Mi71}. 
Since $x\otimes y + y \otimes x$ is a sum of Steinberg relations, 
\be \la{kk}
K_2(F) = \frac{\wedge^2F^*}{\mbox{subgroup generated by Steinberg relations}}.
\ee
In other words, the group $K_2(F)$ is the cokernel of the map
$$
\delta: \Z[F^*-\{1\}] \lra {\wedge}^2F^*, \qquad \{x\} \lms (1-x) \wedge x.
$$
where $\{x\}$ is the generator of $\Z[F^* - \{1\}]$ corresponding to an $x \in F^*- \{1\}$.

Recall the cross-ration of four points on the projective line: 
\be \la{item24p1}
r(s_1, s_2, s_3, s_4) := \frac{(s_1- s_4) (s_2- s_3) }{(s_1- s_3) (s_2- s_4) }.
\ee Let $R_2(F)$ be the 
subgroup of $\Z[F^*-\{1\}]$ generated by the ``five term relations''
\be \la{item4}
\sum_{i=1}^5 (-1)^i\{r(s_1, ..., \widehat s_i, ..., s_5)\}, \qquad s_i \in {\rm P}^1(F), 
\quad s_i \not = s_j. 
\ee
It is well known that $\delta (R_2(F)) =0$ (see 
Lemma 1.8).  
Let us set
$$
B_2(F):= \frac{\Z[F^*-\{1\}]}{R_2(F)}.
$$
Then the map $\delta$ gives rise to a homomorphism
\be \la{BLCOM}
\delta: B_2(F) \lra {\wedge}^2F^*.
\ee
Let $\{x\}_2\in B_2(F)$ be  the 
image of $\{x\}$.   
We add   $\{0\}_2 =  \{1\}_2 =  \{\infty\}_2 =0$, annihilated by $\delta$. 
We view (\ref{BLCOM}) as a complex, called the {\it Bloch complex} \cite{B78}, \cite{Su82}, \cite{DS82}, 
placed
 in degrees $[1,2]$.

Consider a twin of the weight two exponential complex, which we call  
the weight two {\rm Lie}-exponential complex,\footnote{The prefix {\rm Lie} refers to the fact that the period map in this case is a map from the standard Chevalley-Eilenberg complex of the Lie coalgebra ${\cal L}_\ast$ 
associated with the Hopf algebra ${\cal H}_\ast$. See Section 2.4 for the definition of {\rm Lie}-exponential complexes and discussion of the {\rm Lie}-period maps for them.} which   
 is a complex of sheaves  on $X$ 
in degrees $[0,2]$:
\be \la{EXPC}
  \Q_{\cal E}^\bullet(2):= ~~~~  {\cal O}(1)\lra  \Lambda^2 {\cal O} \stackrel{\wedge^2{\rm exp}}{\lra} \Lambda^2 {\cal O}^*.
\ee 
The differentials are given as follows:
\be \la{DEXPC}
2\pi i\otimes a \lms 2\pi i \wedge a, ~~~~
a \wedge b \lms {\rm exp}(a) \wedge {\rm exp}(b). 
\ee
 There is a canonical map of complexes: 
$$
 \Q(2) \lra   \Q_{\cal E}^\bullet(2), ~~~~
(2\pi i)^2 \lms 2\pi i \otimes 2\pi i.
$$ 
Therefore one can easily see  that $ \Q_{\cal E}^\bullet(2)$ is a resolution of $ \Q(2)$.

Let us sheafify the Bloch complex to a complex of sheaves on $X$: 
\be \la{SBC}
  {\rm B}^\bullet(2):= ~~~~  B_2({\cal O}) \lra \Lambda^2 {\cal O}^*.
\ee
Let us define a map of complexes 
\be \la{4.8.15.1aaa}
\begin{array}{cccccccc}
  &&&{  B}_2({\cal O}) & \lra & \Lambda^2 {\cal O}^*\\
&&&&&\\
&&&\downarrow p_2&& \downarrow =\\
 &&&&&\\
 &{\cal O}(1)&\lra  &\Lambda^2 {\cal O}& 
\stackrel{\wedge^2{\rm exp}}{\lra} &\Lambda^2 {\cal O}^*
\end{array}
\ee
 To define the  homomorphism $p_2$, we  set
\be \la{4.10.15.1}
{\rm Li}_2(x):= \int_0^x\frac{dt}{1-t}\circ \frac{dt}{t}, ~~~~-\log (1-x)= \int_0^x\frac{dt}{1-t},
~~~~ \log x:= \int_0^x \frac{dt}{t}.
\ee
Here all integrals are along the same path from $0$ to $x$. The last one is regularised  
using the tangential base point  at $0$ dual to $dt$.  
When $|x| <1$, we have standard power series expansions
$$
-\log (1-x) = \sum_{n=1}^\infty \frac{x^n}{n}, ~~~~{\rm Li}_2(x) = \sum_{n=1}^\infty \frac{x^n}{n^2}.
$$
Then  we set, modifying slightly the original construction of Spencer Bloch \cite{B78},  
$$
{\rm L}_2(x) :=  {\rm Li}_2(x) + \frac{1}{2}\cdot  \log (1-x) \log x +\frac{(2\pi i)^2}{24},  
$$
$$
p_2(\{x\}_2):= \frac{1}{2}\cdot \log(1-x) \wedge \log x + 2\pi i \wedge 
\frac{1}{2\pi i}{\rm L}_2(x).
$$
Notice that  $2\pi i\wedge \frac{2\pi i}{24}=0$ in $\Lambda^2\C$. Indeed, for any integer $N$ we have 
$
2\pi i\wedge \frac{2\pi i}{N}= 
-N \cdot \frac{2\pi i}{N}\wedge \frac{2\pi i}{N} = 0.$ 
Yet it is handy to keep the summand $\frac{(2\pi i)^2}{24}$ in ${\rm L}_2(x)$, although 
it does not change $2\pi i \wedge 
\frac{1}{2\pi i}{\rm L}_2(x)$. 

\bl
i) The map $p_2$ is well defined on $\Z[\C^*-\{1\}]$, i.e. does not depend on the monodromy of 
the logarithms and the dilogarithm along the path $\gamma$ in (\ref{4.10.15.1}).  

ii) The map $p_2$ sends the five term relations to zero. 
\el

\begin{proof} The part i) is easy to check using well known monodromy properties of the dilogarithm.

Let us prove the five term relation. Recall   
the map 
$$
\delta_2: \Z[\C(t)^*-\{1\}] \lra \C(t)^* \wedge \C(t)^*, ~~~~\{x\} \lms (1-x) \wedge x.
$$ 
Then we have a commutative diagram: 
$$
\begin{array}{ccccccc}
&&{\rm Ker}~\delta_2&\lra &\Z[\C(t)^*-\{1\}]]&\stackrel{\delta_2}{\lra}&\C(t)^* \wedge \C(t)^*\\
&&&&&&\\
&&\downarrow &&\downarrow p_2&&\downarrow =\\
&&&&&&\\
&&\C(t)(1)&\lra&\C(t) \wedge \C(t)& \stackrel{\rm exp}{\lra} & \C(t)^* \wedge \C(t)^*.
\end{array}
$$
It implies that $p_2({\rm Ker}~\delta_2) \subset 2\pi i \wedge \C(t)$. 
Next, let us consider a map
\be \la{mapw}
\omega: \Lambda^2 {\cal O}\lra {\Omega}^1, ~~~~f\wedge g \lms \frac{1}{2}(fdg-gdf).
\ee
The differential equation for the dilogarithm function is 
\be \la{deqdil}
d{\rm L}_2(x) = \frac{1}{2}\cdot \Bigl(- \log (1-x) ~d\log x  +  \log x ~d\log (1-x) \Bigr).
\ee
It just means that the following composition  is zero:
\be \la{deqdilq}
\Z[\C(t)^*-\{1\}]] \stackrel{p_2}{\lra}
\Lambda^2 \C(t) \stackrel{\omega}{\lra} {\Omega}_{t}^1.
\ee
The kernel of the map $\omega: 2\pi i \wedge \C(t) \lra \Omega^1$ is $2\pi i \wedge \C$. 
This implies that 
$$
p_2({\rm Ker}~\delta_2) \in 2\pi i \wedge \C. 
$$
Given a configuration of five distinct 
points $(x_1, ..., x_5)$ on $\C{\Bbb P}^1$, 
denote by $R_2(x_1, ..., x_5)\in \Z[\C]$ the corresponding five-term relation element (\ref{item4}). 
Since it lies in the kernel of the map $\delta_2$, 
applying the map $p_2$ to it we get a constant: 
$$
c(x_1, ..., x_5) := p_2\Bigl(R_2(x_1, ..., x_5)\Bigl) \in 2\pi i \wedge \C.
$$ 
Let us calculate this constant. Similar argument shows that we have constants
$$
b(x) := p_2(\{x\}_2 + \{1-x\}_2) \in  2\pi i \wedge \C, ~~~~
 c(x) := p_2(\{x\}_2 + \{x^{-1}\}_2) \in  2\pi i \wedge \C. 
$$
One has $b(x) = c(x)$. Indeed, they tautologically 
coincide if $x$ solves the equation 
$1-x=x^{-1}$. Thus they must coincide for any $x \in \C^*-1$.
On the other hand, 
switching the last two points in the cross-ratio 
we get $r(x_1, x_2, x_3, x_4) = r(x_1, x_2, x_4, x_3)^{-1}$. Therefore   
$$
c(x_1, x_2, x_3, x_4, x_5) + c(x_1, x_2, x_3, x_5, x_4) = c(x). 
$$
Finally, $b(x) =0$ for $x \in (0,1)$. Indeed, 
$\log (1-x) \wedge \log x + \log x \wedge \log (1-x) =0$, 
and each term of ${\rm L}_2(x)$ is well defined if $x \in (0,1)$. So  it 
is sufficient to 
show that ${\rm L}_2(x) +  {\rm L}_2(1-x) =0$. One has $d({\rm L}_2(x) +  {\rm L}_2(1-x))=0$. 
The limit of ${\rm L}_2(x) +  {\rm L}_2(1-x)$ as $x\to 1$ is 
$0$ due to ${\rm Li}_2(1) = \pi^2/6$.
\end{proof}

Recall that the weight two rational Deligne complex $  \Q^\bullet_{\cal D}(2)$  
  is a complex of sheaves on $X$ in degrees $[0,2]$:
$$
\begin{array}{ccccccc}
&&&&  \Q(2)& &\\
 \Q_{\cal D}^\bullet(2):=&&&&\downarrow &&  \\
&&&&{\cal O}& \stackrel{d}{\lra}&\Omega^1
\end{array}
$$ 

Consider the following version of the exponential Deligne complex, which we call 
the weight two {{\rm Lie}-exponential Deligne complex}, 
and abusing notation denote also by  $  \Gamma_{\cal D}(2)$, obtained  
by replacing the constant sheaf $ \Q(2)$ 
by its {\rm Lie}-exponential resolution $ \Q_{\cal E}^\bullet(2)$. It is 
a complex of sheaves in the classical topology on  $X$ associated with the following bicomplex: 
$$
\begin{array}{ccccccccc}
&&&&{\cal O}(1)&\lra &{\cal O}\wedge {\cal O}&\lra 
&{\cal O}^*\wedge {\cal O}^*\\
 \Gamma_{\cal D}(2):=&&&&\downarrow =&&\downarrow \omega&& \\
&&&&{\cal O}& \stackrel{d}{\lra}&\Omega^1& &
\end{array}
$$ 

\bp \la{16.10.15.1}
There is a canonical morphism of 
complexes of sheaves 
\be \la{RD}
r_{\cal D}:   {\rm B}^\bullet(2) \lra   \Gamma_{\cal D}(2). 
\ee
\ep

\begin{proof} Let us define the  map (\ref{RD}) as a morphism of complexes:
$$
\begin{array}{cccccc}
  &&&{  B}_2({\cal O}) &\lra  &\Lambda^2 {\cal O}^*\\
&&&&&\\
&&&\downarrow p_2&&\downarrow {\rm Id}\\
&&&&&\\
 &{\cal O}(1)&\lra  &\Lambda^2 {\cal O}&\lra  &\Lambda^2 {\cal O}^*\\
&\downarrow =  &&\downarrow \omega &&\\
&{\cal O} &\lra &\Omega^1&&\\
\end{array}
$$ 
Here the top raw is the sheafified Bloch complex, and the bottom two raws 
describe the weight two {\rm Lie}-exponential Deligne complex. 
The morphism of the first raw 
to the second is given by the maps $(p_2, {\rm Id})$. The other components of 
the morphism are zero. 

To show that this is a map of complexes we use two facts:

1. The top right square is commutative by the definition of the map $p_2$.

2. The composition ${  B}_2({\cal O})  \stackrel{p_2}{\lra}
\Lambda^2 {\cal O} \stackrel{\omega}{\lra} {\Omega}^1$ is zero by the 
differential equation for the dilogarithm. 

\end{proof}

\paragraph{Applications to regulators.} Let us look at the dilogarithm 
regulator map  for  ${\rm Spec}(\C)$: 
\be \la{4.8.15.1a}
\begin{array}{cccccccc}
  &&&&&{B}_2({\C}) & \lra & \Lambda^2 {\C}^*\\
&&&&&\downarrow p_2&& \downarrow =\\
& & & {\C}^*(1)&\lra  &\Lambda^2 {\C}& 
\stackrel{\wedge^2{\rm exp}}{\lra} &\Lambda^2 {\C}^*
\end{array}
\ee
It implies that there is a canonical map 
$$
{\rm Ker}\Bigl({B}_2({\C}) \lra \Lambda^2 {\C}^*\Bigr) \lra \C^*(1) = {\rm Ker}\Bigl(\Lambda^2 {\C} \lra \Lambda^2 {\C^*}\Bigr).  
$$
According to a theorem of Suslin \cite{Su84}, one has 
$$
{\rm Ker}\Bigl({B}_2({\C}) \lra \Lambda^2 {\C}^*\Bigr) \otimes \Q = K_3^{\rm ind}(\C) \otimes \Q. 
$$
So we get an explicit construction of Beilinson's regulator map 
$$
K_3^{\rm ind}(\C) \lra \C^*(1). 
$$


\subsection{Regulator maps:  motivic complexes  $\to$  exponential Deligne complexes}

\paragraph{Motivic complexes and regulators.} According to Beilinson \cite{B87}, 
for any scheme $X$ over $\Q$, and for each  integer 
$n \geq 0$,  
one should have  a complex of sheaves $ \Z_{\cal M}(X; n)$ in the Zariski topology on $X$, 
called the {\it weight $n$ 
motivic complex of sheaves on $X$}, 
well defined  in the derived category. 
For example, $ \Z_{\cal M}(X; 0)= \Z$, and $ \Z_{\cal M}(X; 1)={\cal O}^*_X[-1]$. 
Beilinson's formula relates its cohomology 
to the weight $n$ pieces for the Adams filtration on  Quillen's K-groups of $X$: 
\be \la{BF}
H^i( \Z_{\cal M}(X; n)\otimes \Q)  
\stackrel{?}{=} {\rm gr}^n_{\gamma}K_{2n-i}(X)\otimes \Q. 
\ee
Beilinson defined higher regulator maps, with the source understood by (\ref{BF}): 
$$
H_{\rm Zar}^i( \Z_{\cal M}(X; n)\otimes \Q) \lra H^i(X,  \Q_{\cal D}(n)).
$$

Let $X$ be a regular complex algebraic variety. 
We want to have  higher regulator maps on the level of complexes.  
Motivic complexes are complexes of sheaves in the Zarisky topology 
on $X$, while the Beilinson-Deligne complexes are complexes of sheaves 
in the classical topology on $X$. To relate them, let us consider a map of sites
$$
\pi: \mbox{\rm Classical site} \lra \mbox{\rm Zariski site}. 
$$
Then the problem 
is interpreted as a problem of of construction of a map of complexes
\be \la{CLZAR}
{  \Z}_{\cal M}(X;n) \lra 
{\rm R}\pi_* {  \Q}_{\cal D}(X;n).
\ee


We address this problem at the generic point ${\cal X}$ of $X$ -- 
this is sufficient for local explicit formulas for the Chern classes. 
Notice that the ${\rm R}\pi_*$  is highly non-trivial 
since the constant sheaf $  \Q_{\cal X}$ has complicated cohomology at the generic point. 

It  is unlikely 
that one can construct a map just to the  Beilinson-Deligne complex 
 on ${\cal X}$. 

Our point is that replacing the constant sheaf $  \Q_{\cal X}$ by its exponential resolution 
and considering the exponential Deligne complex $\Gamma_{\cal D}({\cal X}; n)$, one should be able to define a map of complexes 
\be \la{111222333}
{  \Z}_{\cal M}({\cal X};n) \lra 
\pi_* {  \Gamma}_{\cal D}({\cal X};n). 
\ee
Combining it with the map  
$
\pi_* {  \Gamma}_{\cal D}({\cal X};n) \to
R\pi_* {  \Gamma}_{\cal D}({\cal X};n) 
$ 
we get a regulator map  
(\ref{CLZAR}) for ${\cal X}$.

Here is our strategy to define a map (\ref{111222333}). 
We make the following  assumption:
\vskip 2mm
{\it  The 
 motivic complex $  \Q_{\cal M}({\cal X}; n)$  
can be constructed as the weight $n$ part of the cobar complex 
of a graded commutative Hopf algebra ${\cal A}_\ast({\cal X})$, the motivic Tate Hopf algebra, 
graded by $\Z_{\geq 0}$.} 
\vskip 2mm

Then the Hodge realisation  provides a map of Hopf algebras 
$$
{\cal A}_\ast({\cal X}) \lra {\cal H}_\ast({\cal X}). 
$$ 
It induces a map of their cobar complexes:
\be \la{4.22.15.3a}
\begin{split}
&\mbox {\it the weight $n$ part of the cobar complex of ${\cal A}_\ast({\cal X})$} \lra  \\
&\mbox{\it the weight $n$ part of the cobar complex of ${\cal H}_\ast({\cal X})$}. 
\end{split}
\ee

Composing (\ref{4.22.15.1A})  and (\ref{CHRMDC1}) we arrive at a map of complexes
\be \la{4.22.15.2}
\begin{split}
{\cal P}:   \Q_{\cal M}({\cal X}; n)\lra  
\mbox{\it the weight $n$ exponential Deligne complex $ {\Gamma}_{\cal D}({\cal X}; n)$}. 
\end{split}
\ee

The induced map on the 
cohomology provides  higher regulators.

Using the polylogarithmic complexes, we can avoid 
assumptions about the existence of the motivic Hopf algebra when $n \leq 3$. 
So in this case the construction goes through unconditionally. 
In general there is the Bloch-Kriz construction of the 
motivic Tate Hopf algebra \cite{BK}. However their Hodge realisation map
 deserves a more explicit construction. 

\subsection{A local formula for the second Chern class of a two-dimensional vector bundle.} \la{sec1.2}
We consider complex vector bundles on real manifolds, and  produce a 
local formula for a Cech cocycle representing the topological second  Chern class, as well as 
the second  Chern class in the integral Deligne cohomology. 
All constructions can be applied to 
vector bundles over complex  manifolds. 
The algebraic part of the construction makes sense in Zariski topology. 

\vskip 3mm
Given a two-dimensional vector bundle $E$ on a manifold $X$, pick a cover $\{U_i\}$ of $X$ by 
open sets such that all intersections $U_{i_0 \ldots i_k}:= U_{i_0} \cap ... \cap U_{i_k}$ are empty or contractible. 
Choose a non-zero regular section $s_i$ on each open set $U_i$. 
Then, 

\begin{itemize}

\item For a three open sets $U_1, U_2, U_3$ 
there are three sections $s_1, s_2, s_3$ over  $U_{123}$. They provide  a section
$$
l_2(s_1, s_2, s_3) \in {\cal O}^*_{U_{123}} \otimes_\Z {\cal O}^*_{U_{123}}.
$$
Namely, pick a volume form $\omega \in {\rm det}(E^\vee_{U_{123}})$ 
on the restriction of $E$ to  $U_{123}$. Set
$$
\Delta(s_i, s_j):= \langle \omega, s_i\wedge s_j\rangle, 
$$
\be \la{item1}
l_2(s_1, s_2, s_3):= \Delta(s_1, s_2) \wedge  \Delta(s_2, s_3) + \Delta(s_2, s_3) \wedge \Delta(s_1, s_3) 
+ \Delta(s_1, s_3) \wedge \Delta(s_1, s_2).
\ee
This expression does not depend on the choice of the volume form $\omega$. 

\item For any four open sets $U_1, U_2, U_3, U_4$ take
 the cross-ratio of the restriction of the four sections to  $U_{1234}$:
\be \la{item2}
r(s_1, s_2, s_3, s_4) := \frac{\Delta(s_1, s_4) \Delta(s_2, s_3) }{\Delta(s_1, s_3) \Delta(s_2, s_4) }\in 
{\cal O}_{U_{1234}}^*.
\ee
The Pl\"ucker identity implies that it satisfies the crucial relation
\be \la{item3}
(1 - r(s_1, s_2, s_3, s_4)) \wedge r(s_1, s_2, s_3, s_4) = 
\ee
$$
l_2(s_2, s_3, s_4) - l_2(s_1, s_3, s_4) + l_2(s_1, s_2, s_4) - l_2(s_1, s_2, s_3). 
$$
\end{itemize}

Recall the map 
$\delta: \Z[F^*-\{1\}] \lra {\wedge}^2F^*$, given by $ \{x\} \lms (1-x) \wedge x.$ 

Recall  the 
subgroup $R_2(F) \subset \Z[F^*-\{1\}]$ generated by the ``five term relations'' 
(\ref{item4}). 

\bl
One has $\delta (R_2(F)) =0$.
\el  

\begin{proof} 
Denote by $C_n(k)$ the free abelian group generated by the configurations of 
$n$ vectors in generic position in a $k$-dimensional vector space over a field $F$. 
It follows from (\ref{item3}) that  there is a map of complexes 
\be \la{BCMAP}
\begin{array}{ccccc}
C_5(2) & \stackrel{d}{\lra} &C_4(2) &\stackrel{d}{\lra}&C_3(2)  \\
\downarrow l_0&&\downarrow l_1&&\downarrow l_2\\
R_2(F)&\hra&\Z[F^*-\{1\}]&\stackrel{\delta}{\lra}&\Lambda^2F^*
\end{array}
\ee
Here the map $l_2$ is given by (\ref{item1}), the map $l_1$ is given by 
$(s_1, ..., s_4) \lms \{r(s_1, ..., s_4)\}$, and the map 
$l_0$ assigns to a configuration of five generic vectors $(s_1, ..., s_5)$ 
the configuration of the corresponding five points on ${\rm P}^1$.  
\end{proof} 

Setting 
$B_2(F):= \Z[F^*-\{1\}]/R_2(F)$ we get the Bloch complex 
$\delta: B_2(F) \lra \Lambda^2 {F}^*$.

Our construction delivers a Cech cochain $C_\bullet$ 
for the covering $\{U_i\}$ of total degree four with values 
in the sheafified Bloch complex
\be \la{SBC}
  {\rm B}^\bullet(2)= ~~~  B_2({\cal O}) \lra \Lambda^2 {\cal O}^*.
\ee 
It 
 has two components given by (\ref{item1}) and (\ref{item2}): 
$$
C_3(U_i, U_j, U_k) \in \Lambda^2 {\cal O}_{U_{ijk}}^*~~~\mbox{and}~~~ 
C_4(U_i, U_j, U_k, U_l) \in B_2({\cal O}_{U_{ijkl}}).
$$
Condition (\ref{item3}) plus the five term relations (\ref{item4}) 
just mean that it is a cocycle. It represents the {\it second motivic Chern class} of the vector bundle $E$: 
\be \la{8.9.15.1}
c^M_2(E) \in H^4(X,   {\rm B}^\bullet(2)). 
\ee

\paragraph{Remark.} The   name  refers to a construction of the second 
universal motivic Chern class of Milnor's simplicial model $BGL_{2\bullet}$ of the classifying space $BGL_{2}$:
$$
c^{\cal M}_2\in H^4(BGL_{2\bullet},   \Z_{\cal M}(2)).
$$
Here $  \Z_{\cal M}(2)$ is the weight two motivic complex, which is a complex of sheaves 
in Zariski topology  on the simplicial scheme 
 $BGL_{2\bullet}$. It is defined by applying the Gersten resolution to the Bloch complex at 
the generic point. A complex two dimensional vector bundle on a manifold $X$ 
equipped with a Cech cover can be described as the pull back of the universal bundle over $BGL_{2\bullet}$. 
Then the class $c^{\cal M}_2$ pulls back to the class (\ref{8.9.15.1}).
\vskip 3mm

We use the classical topology,  aiming 
at a local formula for the topological Chern class
$$
c_2(E) \in H^4(X, \Z(2)).
$$
To get it from the motivic one (\ref{8.9.15.1}) is a non-trivial problem. 
Although it is asking for  the dilogarithm, 
we have  to deal with its complicated multivalued nature. 
We employ the weight two {\rm Lie}-exponential complex 
to handle this problem, and construct a cocycle 
representing the second Chern class  in the weight two {\rm Lie}-exponential Deligne complex
\be \la{CHECHCC}
c^{\cal D}_2(E) \in H^4(X,   \Gamma_{\cal D}(2)).
\ee
\vskip 3mm

Recall the  weight two {\rm Lie}-exponential complex
of sheaves:
\be \la{EXPCf}
  \Q_{\cal E}^\bullet(2):= ~~~~  {\cal O}(1)\lra  \Lambda^2 {\cal O} \stackrel{\wedge^2{\rm exp}}{\lra} \Lambda^2 {\cal O}^*.
\ee 
\be \la{DEXPCf}
2\pi i\otimes a \lms 2\pi i \wedge a, ~~~~
a \wedge b \lms {\rm exp}(a) \wedge {\rm exp}(b). 
\ee
 It is a resolution of $ \Q(2)$. 
Recall a map of complexes (\ref{4.8.15.1aaa}): 
\be \la{4.8.15.1aaa1}
\begin{array}{cccccccc}
  &&&{  B}_2({\cal O}) & \lra & \Lambda^2 {\cal O}^*\\
&&&\downarrow p_2&& \downarrow =\\
 &{\cal O}(1)&\lra  &\Lambda^2 {\cal O}& 
\stackrel{\wedge^2{\rm exp}}{\lra} &\Lambda^2 {\cal O}^*
\end{array}
\ee
 Here 
$$
{\rm L}_2(x) :=  {\rm Li}_2(x) + \frac{1}{2}\cdot  \log (1-x) \log x +\frac{(2\pi i)^2}{24},  
$$
$$
p_2(\{x\}_2):= \frac{1}{2}\cdot \log(1-x) \wedge \log x + 2\pi i \wedge 
\frac{1}{2\pi i}{\rm L}_2(x).
$$

Recall the weight two {{\rm Lie}-exponential Deligne complex} $  \Gamma_{\cal D}(2)$: 
$$
\begin{array}{ccccccccc}
&&&&{\cal O}(1)&\lra &{\cal O}\wedge {\cal O}&\lra 
&{\cal O}^*\wedge {\cal O}^*\\
 \Gamma_{\cal D}(2):=&&&&\downarrow =&&\downarrow \omega&&\downarrow  \\
&&&&{\cal O}& \stackrel{d}{\lra}&\Omega^1& \stackrel{}{\lra}&0
\end{array}
$$ 

By Proposition \ref{16.10.15.1}, 
there is a canonical morphism of 
complexes of sheaves 
\be \la{RDf}
r_{\cal D}:   {\rm B}^\bullet(2) \lra   \Gamma_{\cal D}(2). 
\ee

The Cech cocycle 
$(C_3, C_4)$ representing a class in $H^4(X,   {\rm B}^\bullet(2))$, combined with a morphism of complexes 
 (\ref{RDf}),  delivers 
a  Cech cocycle representing the second Chern class $c_2^{\cal D}(E)$ in 
(\ref{CHECHCC}). 

Namely, we start with the Cech cocycle 
$(C_3, C_4)$ with values in the Bloch complex: 
$$
C_4 \in {  B}_2({\cal O})  \stackrel{\delta}{\lra} C_3 \in \Lambda^2 {\cal O}^*.
$$
Let us define a Cech cochain $(C_3, \widetilde C_3, \widetilde C_4, \widetilde C_5)$ 
with values in the weight two {\rm Lie}-exponential complex, 
organised as follows:
\be \la{4.8.15.1}
\begin{array}{cccccccc}
& \widetilde C_5 \in \Z(2)&\stackrel{d}{\lra} &  \widetilde C_4 \in {\cal O}(1)&\stackrel{d}{\lra} & \widetilde C_3 \in \Lambda^2 {\cal O}& 
\stackrel{\wedge^2{\rm exp}}{\lra} & C_3\in \Lambda^2 {\cal O}^*
\end{array}
\ee
\begin{itemize}

\item For any three open sets $U_1, U_2, U_3$, let us define $$
\widetilde C_3(U_1, U_2, U_3)\in \Lambda^2{\cal O}_{U_{123}}. 
$$
Namely, we choose a branch of each $\log \Delta(s_i, s_j)$  
on $U_{123}$, and  set 
\be \la{item1a}
\widetilde C_3(U_1, U_2, U_3):= 
\ee
\be \la{item1ab}
\log \Delta(s_1, s_2) \wedge \log \Delta(s_2, s_3) + \log \Delta(s_2, s_3) \wedge \log \Delta(s_1, s_3) 
+ \log \Delta(s_1, s_3) \wedge \log \Delta(s_1, s_2).
\ee

\item We assign to any four open sets $U_1, U_2, U_3, U_4$ an element
$$
\widetilde C_4(U_1, U_2, U_3, U_4) \in {\cal O}_{U_{1234}}(1).
$$
To define it, we use an isomorphism, see (\ref{EXPC}) - (\ref{DEXPC}):
$$
{\cal O}_U(1)/\Z(2) ~\stackrel{\sim}{=}~ \Z(1) \wedge  {\cal O}_U(1) ~=~ 
 {\rm Ker}\Bigl(\Lambda^2{\cal O}_U \stackrel{\wedge^2 {\rm exp}}{\lra} \Lambda^2{\cal O}^*_U \Bigr).
$$
So we exhibit an element in $\Lambda^2{\cal O}_{U_{1234}}$ which is in 
the kernel of the $\wedge^2 {\rm exp}$ map:
$$
\widetilde C_4(U_1, U_2, U_3, U_4) := 
(\delta_{\rm Cech}\circ \widetilde C_3)(U_1, U_2, U_3, U_4) - 
$$
\be \la{item2a}
2 \pi i \wedge \frac{1}{2\pi i}{\rm L}_2(r(s_1, s_2, s_3, s_4)) + \log (1 - r(s_1, s_2, s_3, s_4)) \wedge 
\log r(s_1, s_2, s_3, s_4).
\ee

To find 
$\widetilde C_4(U_1, U_2, U_3, U_4)$ explicitly 
we start with an equality in $\wedge^2{\cal O}_{U_{1234}}^*$:
\be \la{EQTME}
(\delta_{\rm Cech}\circ C_3)(U_1, U_2, U_3, U_4) + (\delta_{\rm Bloch}\circ C_4)(U_1, U_2, U_3, U_4) = 0, 
\ee
 which
 is just equivalent to (\ref{item3}). It follows that 
$$
(\delta_{\rm Cech}\circ \widetilde C_3)(U_1, U_2, U_3, U_4) + 
(p_2\circ C_4)(U_1, U_2, U_3, U_4)  
$$
$$
= 2 \pi i \wedge \frac{1}{2\pi i}{\rm L}_2(r(s_1, s_2, s_3, s_4)) + 
2\pi i \wedge \log~F.
$$ 
So we set, ``dropping'' $2\pi i \wedge $ in the last formula: 
$$
\widetilde C_4(U_1, U_2, U_3, U_4):= \frac{1}{2\pi i}{\rm L}_2(r(s_1, s_2, s_3, s_4)) + \log~F. 
$$

The ``correction term'' $2\pi i \wedge \log~F$  shows up as follows. 
Since 
$\log (fg) - \log (f) - \log (g) $ is a locally constant function with 
values in $2\pi i \Z$, an equality 
$\sum_i f_i \wedge g_i =0$, which in our case is just the equality 
(\ref{EQTME}),  implies only that, after we choose  branches of 
$\log (f_i)$ and $\log (g_i)$ on a contractible set, $\sum_i \log (f_i) \wedge \log (g_i) = 2\pi i \wedge \log ~F$. Notice that in our case the choices of the branches of $\log$ consist of  the choices 
made in (\ref{item1ab}) and (\ref{item2a}) 

\item Finally, to any five open sets $U_1, U_2, U_3, U_4, U_5$ we assign an element
$$
\widetilde C_5(U_1, U_2, U_3, U_4, U_5) := \sum_{i=1}^5 (-1)^i \widetilde C_4(U_1, \ldots , 
\widehat U_i, \ldots , U_5) \in (2\pi i)^2\Q.
$$
A priory this sum lives in ${\cal O}_{U_{12345}}$. We claim that it is annihilated by the 
differential in the exponential complex. Indeed, the Cech coboundary of the first line (\ref{item2a}) 
is zero due to  the five term relation for the $\Lambda^2\C$-valued dilogarithm, 
that is since the map $p_2$ sends the five term relation to zero. For the second line this is just 
$\delta_{\rm Cech}^2=0$. Therefore 
$C_5^{\cal D} \in (2\pi i)^2\Q$.

\end{itemize}

We get a cocycle in the  
Cech complex with coefficients in the {\rm Lie}-exponential Deligne complex. It   
 represents the second Chern class $c_2^{\cal D}(E)$, 
and hence the usual Chern class.

\subsection{Explicit formulas for the universal Chern classes.} 

Let us formulate 
our approach to local formulas for the Chern classes. Denote by $BGL^*_{N\bullet}$ the classifying space for $GL_N$. The $\ast$ stands for an open ``generic'' part $BGL^*_{\bullet}$ of Milnor's $BGL_{\bullet}$, which is 
a model of the classifying space.  
One should construct
universal  Chern classes  of $BGL^*_{N\bullet}$ with values in the exponential Deligne complex: 
\be \la{cnan}
c^{\cal D}_n \in H^{2n}(BGL^*_{N\bullet},  {\Gamma}_{\cal D}(n)).
\ee
They  induce  explicit cocycles for the 
Chern classes in a given Cech cover. 

We define the universal Chern classes  in three steps. 

\begin{enumerate}

\item An explicit formula for the Chern classes
with values in the 
  {\it bigrassmannian complexes} 
${\rm BC}(n)$ \cite{G93}. 

\item A map from the bigrassmannian complex to a motivic complex. 

There are several flavors of the problem, depending on our choice  
of the motivic complex. 

When $n=1,2,3$ there is a map of the bigrassmannian complex  
to the polylogarithmic motivic complexes ${\rm B}^\bullet(n)$. The latter reflects
 the motivic nature of the classical 
polylogarithms. For example, ${\rm B}^\bullet(2)$ is the Bloch complex. 
For $n=4$ there is also an explicit map to the motivic complex. 
So for $n \leq 4$ there is a satisfactory 
construction. 

So we should get  the universal motivic Chern class\footnote{Here we do not need to restrict to 
$BGL_N^*$.}
\be \la{cnmot}
c^{\cal M}_n \in H_{\rm Zar}^{2n}(BGL_{N\bullet},  {\Z}_{\cal M}(n)).
\ee

\item A map from the motivic complex to the weight $n$  exponential complex
 $  \Q_{\cal E}^\bullet(n)$, which allows to promote 
the class $c_n^{\cal M}$ (\ref{cnmot}) to the universal Chern class (\ref{cnan}). 
\end{enumerate}


One could probably combine Steps 2 and 3, to define an explicit map 
from the bigrassmannian complex to the weight $n$  exponential complex. Its most non-trivial part 
follows from the motivic construction of the Grassmannian $n$-logarithm in \cite{G11}. 
However the  problem  is open for  $n>4$.

\vskip 3mm

In contrast with this, the problem of explicit construction of Chern classes with values in the 
real Deligne cohomology is solved for all weights $n$: one combines the Step 1 with the  construction 
of a map from bigrassmannian complex to the real Deligne complex given 
in \cite{G95}. 

\vskip 3mm
An approach to construction of Grassmannian polylogarithms was developed by Hanamura and MacPherson 
\cite{HM90}.

\paragraph{Organisation of the paper.}

In Section 2 we recall the definition of the fundamental Hodge-Tate 
Hopf algebra ${\cal H}_\ast$, and then  construct the period morphism.

In Section 3 we calculate the period morphism from the 
polylogarithmic motivic complexes of weights $\leq 4$ to the {\rm Lie}-exponential complexes. 

Section 4 mostly borrowed from \cite{G93}. 
We recall the construction of characteristic classes using the bigrassmannian complex, 
articulating the role of the hypersimplices, and then recall 
the map from the bigrassmannian complex to the motivic
 complexes of weights $\leq 4$. 
Combining with the construction of the period morphisms from Section 2 we get an 
explicit construction of the universal Chern classes of weights $\leq 4$. 

Section 5 is a continuation of Section 2: we show that 
the $\C/\R(n)$-part of the canonical 
map $$
\omega^\bullet_n: \text{ the {\rm Lie}-exponential complex} \lra \text{the de Rham complex 
$\Omega^\bullet$}
$$ is 
homotopic to zero, and construct the homotopy, getting a regulator 
map to the real Deligne complex. 

\paragraph{Acknowledgments.} 
This work was supported by the  NSF grant   DMS-1301776.
A part of the paper was written at the IHES (Bures sur Yvette) during the Summer of 2015.  
I am  grateful to the NSF and IHES for the support.

\section{Period morphisms} 

\subsection{The $\Q$-Hodge-Tate Hopf algebra, and the period morphisms}
 
\paragraph{The algebra background.} Consider  a graded commutative Hopf algebra  over $\Q$ with a unit:
\be \la{theghaA}
A_{\ast} = \oplus_{k=0}^\infty A_k. 
\ee 
Let 
$\Delta: A_{\ast} \lra A_{\ast} \otimes A_{\ast}$  be the coproduct. 
The quotient 
$$
{\rm CoLie}(A_{\ast}):= \frac{A_{\ast}}{A_{>0}\cdot A_{>0}}
$$
is a  graded Lie coalgebra 
with the cobracket $\delta$ induced by the 
coproduct $\Delta$ on $A_{\ast}$. 
Let 
${\rm Lie}(A_{\ast})$ be  its graded dual. Then the 
universal enveloping algebra of the Lie algebra ${\rm Lie}(A_{\ast})$ 
is the graded dual to the Hopf algebra $A_{\ast}$, assuming that all 
graded components are finite dimensional. 

Consider the reduced coproduct
 $$
\Delta':= \Delta - ({\rm Id} \otimes 1 + 1 \otimes {\rm Id}): A_{\ast} \lra A_{>0} \otimes A_{>0} 
$$ 
The reduced cobar complex of the Hopf algebra $A_{\ast}$ is the following complex 
 starting in degree $1$:   
$$
A_{\ast}  \stackrel{\Delta'}{\lra}   
A_{\ast}  \otimes A_{\ast}    \stackrel{\Delta'}{\lra} ... 
\stackrel{\Delta'}{\lra}   \otimes^n A_{\ast} \stackrel{\Delta'}{\lra} \ldots . 
$$ 
$$
\Delta'(a_1\otimes \ldots \otimes a_n) := \sum_{k=1}^n(-1)^ka_1 \otimes \ldots \otimes \Delta'(a_k) \otimes \ldots \otimes a_n.
$$

The standard cochain complex of the Lie coalgebra ${\rm CoLie}(A_{\ast})$ is given by 
$$
{\rm CoLie}(A_{\ast}) \stackrel{\delta}{\lra}  \Lambda^2{\rm CoLie}(A_{\ast}) \stackrel{\delta}{\lra}  
\Lambda^3{\rm CoLie}(A_{\ast}) \stackrel{\delta}{\lra}  \ldots .
$$

These two complexes are canonically quasiisomorphic. The degree $n>0$ part of either of them 
calculates ${\rm RHom}_{A_{\ast}}(\Q(0), \Q(n))$ in the category of graded $A_{\ast}$-comodules, 
or, what is the same, graded ${\rm CoLie}(A_{\ast})$-comodules, 
 where $\Q(n)$ is the trivial one dimensional comodule 
in degree $-n$. 

 \paragraph{The fundamental Hopf algebra of the category of mixed Hodge-Tate structures.} 
For the convenience of the reader I recall some definitions  
from [BGSV]. See details in \cite[Section 4]{G96}. 

A a mixed $\Q$-Hodge structure $H$ is {\it Hodge-Tate} if its weight factors are isomorphic to 
$\oplus \Q(k)$.  A {\it $n$-framing} on  $H$ is a choice of a 
nonzero maps 
$v_0: \Q(0) \to gr^W_{0}H$ and $f^n: gr^W_{-2n}H \to \Q(n)$.  Consider the equivalence
relation $\sim$ on the set of all $n$-framed Hodge-Tate structures induced by the following: 
if there is a map $H_1 \to H_2$ compatible with frames, then 
 $H_1 \sim H_2$.  In particular,  any $n$-framed
Hodge-Tate structure is equivalent to a one $H$ with $W_{-2n-2}H = 0$, $W_{0}H=H$.
Let ${\cal H}_{n}$ be the set of equivalence classes. 
We define on ${\cal H}_{n}$ an abelian group structure as follows:  
$$
(f^n, H, v_0) + (\widetilde f^n, \widetilde H, \widetilde v_0):= 
(f^n+ \widetilde f^n, H\oplus \widetilde H, v_0 + \widetilde v_0); 
$$
$$
-(f^n, H, v_0) := ( f^n, H, -v_0). 
$$

The tensor product of mixed Hodge structures induces the commutative multiplication
$$
\mu: {\cal H}_{k} \otimes {\cal H}_{\ell}\to {\cal H}_{k+ \ell}.
$$

Let us define a coproduct 
 \begin{equation} \label{GH}
\Delta = \bigoplus_{k+\ell=n} \Delta_{k \ell}: {\cal H}_{n}\to \bigoplus_{k + \ell = n} 
{\cal H}_{k}\otimes
{\cal H}_{\ell}.
 \end{equation}
Let $(f^n, H, v_0)\in {\cal H}_{n}$.  Choose a basis $\{v^{(i)}_k\}$ in ${\rm Hom}(\Q(k), gr_{-2k}^W H)$ 
and the dual basis $\{f_{(i)}^k\}$ in 
${\rm Hom}(gr_{-2k}^W H, \Q(k))$.    Then
$$
\Delta_{k,n-k}(f^n, H, v_0) := \sum_i (f^n, H, v^{(i)}_k) \otimes (f_{(i)}^k, H, v_0). 
$$
The graded $\Q$-vector space
$$
{\cal H}_\ast:= \oplus_{n=0}^\infty {\cal H}_n, 
$$
 has a natural structure of a graded Hopf algebra over $\Q$ with the commutative multiplication
$\mu$ and the comultiplication $\Delta$.

\begin{theorem} \label{Theorem 4.1}
 The category of mixed $\Q$-Hodge-Tate structures is canonically equivalent to the category of finite-dimensional graded ${\cal H}_{\ast}$-comodules.
\end{theorem}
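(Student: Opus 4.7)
The plan is a Tannakian reconstruction argument, in which the combinatorial Hopf algebra ${\cal H}_\ast$ built from framed objects is identified with the ring of matrix coefficients of the Tannakian fundamental group of the category ${\rm MHTS}_\Q$ of mixed $\Q$-Hodge--Tate structures. First I would observe that ${\rm MHTS}_\Q$ is a neutral Tannakian category: it is abelian, $\Q$-linear, rigid, with tensor product, and admits the fiber functor
$$
\omega \colon H \longmapsto \bigoplus_{k} {\rm Hom}_{\rm MHS}(\Q(k),\, gr^W_{-2k}H)
$$
to graded $\Q$-vector spaces (the grading coming from the weight filtration, which splits canonically on each object in the Hodge--Tate case). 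Abstract Tannakian duality then produces a graded Hopf algebra $A_\ast$ and an equivalence between ${\rm MHTS}_\Q$ and finite-dimensional graded $A_\ast$-comodules, so the substance of the theorem is the canonical identification $A_\ast \cong {\cal H}_\ast$.

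To define the functor $F\colon {\rm MHTS}_\Q \to {\cal H}_\ast\text{-comod}$, put $F(H) = \omega(H)$ and define the coaction $\rho_H\colon \omega(H) \to \omega(H)\otimes{\cal H}_\ast$ by reading it directly off the coproduct formula (\ref{GH}): for a representative $(H,v_0)$ with $W_0H=H$, $W_{-2n-2}H=0$, and dual bases $\{v_k^{(i)}\},\{f^k_{(i)}\}$ of the weight-graded pieces, set
$$
\rho_H(v_0) \;:=\; \sum_{k,i} v_k^{(i)} \otimes (f^k_{(i)},\, H,\, v_0).
$$
Independence of basis choice is immediate from the definition of ${\cal H}_\ast$, coassociativity reduces to the coproduct formula $(\ref{GH})$ applied twice, counitality is tautological, and tensor-compatibility follows from the compatibility of the multiplication $\mu$ on ${\cal H}_\ast$ with the tensor product of framed objects. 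This makes $F$ a $\Q$-linear tensor functor.

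For the quasi-inverse $G$, I proceed by induction on the length of the weight filtration. Given a finite-dimensional graded ${\cal H}_\ast$-comodule $V=\bigoplus V_k$, set $gr^W_{-2k}H := V_k\otimes\Q(k)$ as a pure Hodge structure, and use the matrix coefficients of the coaction $\rho$ to extract, weight by weight, extension classes in ${\rm Ext}^1_{{\rm MHTS}_\Q}(\Q(k),\Q(\ell))$. The key input is Beilinson's computation
$$
{\rm Ext}^1_{{\rm MHTS}_\Q}(\Q(0),\,\Q(n)) \;=\; \C/\Q(n), \qquad {\rm Ext}^{\geq 2}_{{\rm MHTS}_\Q} \;=\; 0,
$$
which shows on one hand that the lowest-weight matrix coefficient of a $1$-framed length-two Hodge--Tate structure recovers its class in ${\cal H}_n$, and on the other that no higher obstructions arise when assembling the weight-graded data into an actual MHTS. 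The natural isomorphisms $F\circ G \simeq {\rm id}$ and $G\circ F \simeq {\rm id}$ are then verified on generators (framed objects, respectively elements of a basis of the comodule).

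The main obstacle is the essential surjectivity/reconstruction step: while Tannakian duality guarantees an equivalence with comodules over \emph{some} Hopf algebra, one must show that the explicit framed construction captures precisely the Hopf algebra of matrix coefficients, and that an arbitrary coaction integrates to a genuine MHTS with the correct Hodge filtration on $H_\C$. Concretely this amounts to lifting the Hodge filtration weight by weight using vanishing of ${\rm Ext}^{\geq 2}$ in ${\rm MHTS}_\Q$, and to checking that the framed-object description exhausts all possible extension classes. Once this bookkeeping is arranged functorially, the equivalence of categories follows formally.
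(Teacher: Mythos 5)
Your Tannakian approach is the right one, and is almost certainly the route taken in \cite[Section~4]{G96}, which the paper cites in lieu of giving a proof. The fiber functor $\omega = gr^W_\bullet$ to graded $\Q$-vector spaces is correct (exactness and faithfulness both follow from strictness of morphisms with respect to $W$), and the substantive content is indeed the identification of ${\cal H}_\ast$ with the Hopf algebra of matrix coefficients of $\omega$. That said, there are genuine gaps in the execution that deserve attention.

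First, a misconception to flag: you write that the weight filtration ``splits canonically on each object in the Hodge--Tate case.'' This is false over $\Q$ --- if $W_\bullet$ split $\Q$-rationally, every Hodge--Tate structure would be a direct sum of Tate twists, the category would be semisimple, and ${\cal H}_\ast$ would be trivial in positive degree. The filtration splits canonically only over $\C$, via the bigrading $H_\C = \bigoplus_p F^pH_\C \cap W_{2p}H_\C$ used in Section~2.2, and the discrepancy between the rational lattice and this complex splitting is exactly what the period matrix (and hence ${\cal H}_\ast$) records. The grading on $\omega(H)$ comes from it being an associated graded, not from a splitting of $H$.

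Second, your coaction formula defines $\rho_H$ only on the degree-zero vector $v_0$. A comodule structure requires $\rho_H$ on all of $\omega(H)$; for $v_\ell \in \omega(H)_\ell$ one must set $\rho_H(v_\ell) = \sum_{k\geq\ell,i} v_k^{(i)}\otimes [f^k_{(i)},H,v_\ell]$, where the framed class in ${\cal H}_{k-\ell}$ is obtained by Tate-twisting. This is routine, but as written your $F$ is not yet a functor to comodules.

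Third, and most substantively, the quasi-inverse $G$ is where the real work sits, and the inductive $\mathrm{Ext}$-argument you sketch is not obviously the cleanest route, nor is it complete. Knowing $\mathrm{Ext}^1_{\rm MHTS}(\Q(0),\Q(n)) = \C/\Q(n)$ and $\mathrm{Ext}^{\geq 2}=0$ lets you build an iterated extension with the prescribed weight-graded pieces and extension data, but you still must check that the resulting object has $F(GV)\cong V$ as a comodule --- i.e., that the full coaction, not just its ``two-step'' matrix coefficients, is recovered --- and this requires tracking the compatibility between successive extension choices. The cleaner path is to identify ${\cal H}_\ast$ directly with the coend $\int^H \omega(H)^\vee\otimes\omega(H)$ of the fiber functor (the framed-object equivalence relation is precisely the coend relation, and the $\mathbb{G}_m$-equivariance from the grading cuts out the pro-unipotent part), and then invoke the general Tannakian reconstruction theorem, in which essential surjectivity comes for free. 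Your closing paragraph correctly identifies this as the obstacle; the proposal as written does not yet close it.
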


Let $\Delta_n' $ be the restriction of the restricted  coproduct $\Delta'$ to $ {\cal H}_n$. Then 
for $n>0$ we have 
$$
 {\rm Ker} (\Delta'_n)    =  
\quad 
 \frac{\C}{(2 \pi i)^{n}\Q}  =  {\rm Ext}_{MHS/\Q}^1(\Q(0), \Q(n)).
$$  

In \cite{G96} we constructed a canonical homomorphism, called the {\it big period map}
\begin{equation} \label{h1}
P_n: {\cal H}_n \quad \lra \quad \C^* \otimes_{\Q} \C(n-2).
\end{equation}
 The restriction of $P_n$ to the subgroup $ {\rm Ker} (\Delta'_n )$ 
provides an isomorphism  
$$
 \frac{\C}{(2 \pi i)^{n}\Q}  =  {\rm Ker} (\Delta'_n)  \quad \lra   \quad  \C^*\otimes (2 \pi i)^{n-1}.
$$

\paragraph{Period morphisms.}

The same construction as above for the category of variations of framed 
mixed Hodge-Tate structures over a manifold $X$ delivers a sheaf ${  {\cal H}}_{\ast}$ 
of graded Hopf algebras in the 
{\it classical} topology on $X $.   
Consider a  complex of  sheaves ${  {\cal H} }^{\bullet}(n) $ given by the weight $n$ part of 
the reduced cobar complex 
  of ${  {\cal H}}_{\ast}$, placed in degrees $[1,n]$:   
$$
{  {\cal H}}_n   \stackrel{\Delta'}{\lra}   
({  {\cal H}} \otimes {  {\cal H}})_n    \stackrel{\Delta'}{\lra} ... 
\stackrel{\Delta'}{\lra}   \otimes^n {  {\cal H}}_1. 
$$
  For $n>0$ one has a quasiisomorphism of complexes of sheaves in the classical topology on $X$: 
$$
  {\rm RHom}_{{\rm MHS}_X}(\Q(0)_X, \Q(n)_X) = {  {\cal H}}_n  \stackrel{\Delta'}{\lra} 
({  {\cal H}} \otimes {  {\cal H}} )_n  \stackrel{\Delta'}{\lra} ... 
\stackrel{\Delta'}{\lra}  \otimes^n {  {\cal H}}_1. 
$$
We can state now precisely Theorem \ref{periods1A}. 
 \begin{theorem} \label{periods1}
There exists a canonical morphism of complexes of sheaves 
$$
 P^{\bullet}_n: {  {\cal H}  }^{\bullet}(n)  
\lra     \Q_{\bf E}^{\bullet}(n),
$$ 
called the \underline{period morphism}, 
which satisfies the properties 1)-3) in Theorem \ref{periods1A}. 
\end{theorem}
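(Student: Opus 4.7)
My plan is to define $P^\bullet_n$ concretely using the \emph{Deligne splitting} of variations of mixed Hodge-Tate structures, which canonically splits the weight filtration over $\mathcal{O}$. For a framed weight-$n$ Hodge-Tate variation $(f^n,H,v_0)$ on $X$, let $s_{\rm D}$ denote the Deligne splitting and $s_{\rm B}$ any local Betti splitting over $\mathcal{O}_\Q$ on a contractible open. The transition matrix $M=s_{\rm D}^{-1}\,s_{\rm B}$ is block-unipotent, with the $(p,q)$-entry (for $p\le q$) carrying the Tate twist $\mathcal{O}(q-p)$. Read against the framings $v_0$ and $f^n$, these entries are the raw material from which $P_n^k$ is built.

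I would first define $P_n^1(f^n,H,v_0):=f^n\circ M\circ v_0\in\mathcal{O}(n-1)$, which is the big period map of \cite{G96}, establishing property~(2). For $2\le k\le n$, I would define $P_n^k$ on an elementary tensor $h_1\otimes\cdots\otimes h_k\in(\otimes^k\mathcal{H}_{>0})_n$, with weights $n_1,\ldots,n_k$ summing to $n$, by extracting from each of the first $k-1$ factors only the multiplicative diagonal (``$\exp$'') part of its period, valued in $\mathcal{O}^*$, while the $k$-th factor contributes its full additive $\log$-valued period in $\mathcal{O}(n_k-1)$. The resulting element lives in $\otimes^{k-1}\mathcal{O}^*\otimes\mathcal{O}(n-k)$, and property~(1), that $P_n^n$ is the identity on $\otimes^n\mathcal{H}_1=\otimes^n\mathcal{O}^*_\Q$, is then tautological from the identification ${\mathcal H}_1={\mathcal O}^*_\Q$.

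The first verification is that $P^\bullet_n$ is a chain map. Using the explicit formula for the coproduct recalled in the excerpt, $\Delta_{k,n-k}(f^n,H,v_0)=\sum_i(f^n,H,v^{(i)}_k)\otimes(f^k_{(i)},H,v_0)$, compatibility with the exponential differential $b\otimes 2\pi i\mapsto \exp(b)\otimes 2\pi i$ reduces to the multiplicativity of the Deligne splitting under tensor products: the entries of $M$ for a tensor product are obtained by unipotent matrix composition, and unipotent composition of the log-entries is turned into exponentiation by the differential. The identity $\exp(2\pi i)=1$, which forces $d^2=0$ in $\Q^\bullet_{\bf E}(n)$, matches coassociativity $\Delta'^2=0$ on the cobar side.

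The main obstacle is property~(3), that $\Omega_n^k\circ P_n^k=0$ for $k<n$. This asserts the vanishing of the $k$-form $\pm\, g\,d\log f_1\wedge\cdots\wedge d\log f_{k-1}$ assembled from the off-diagonal entries of $M$. The mechanism is Griffiths transversality: the Gauss-Manin connection on a Hodge-Tate variation acts on the Deligne splitting by a one-form $\eta\in\mathrm{End}(H)\otimes\Omega^1$ lowering the $(p,p)$-index by one, and the components of $\eta$ are precisely the $d\log$ of the adjacent off-diagonal entries of $M$. Iterating, $\Omega_n^k\circ P_n^k$ becomes a telescoping expression in entries of $\eta$ that collapses for $k<n$, surviving only at $k=n$ where it delivers the top $d\log$ symbol. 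I expect to prove this cleanly by induction on $k$ using the Maurer-Cartan equation $d\eta+\tfrac12[\eta,\eta]=0$ satisfied by the connection form $\eta$ in the Deligne splitting.
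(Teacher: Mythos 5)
Your proposal correctly identifies the two structural pillars — the period matrix $\mathcal{M}$ (your $M=s_{\mathrm{D}}^{-1}s_{\mathrm{B}}$, i.e.\ the paper's $S_{HT}^{-1}\circ S_W$) and Griffiths transversality for property~(3) — but the core of the construction is missing, and one key ingredient is misidentified.

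The most serious gap: you write $P_n^1(f^n,H,v_0):=f^n\circ M\circ v_0\in\mathcal{O}(n-1)$ and call it "the big period map of \cite{G96}." This is the \emph{scalar} period $\langle f^n\,|\,\mathcal{M}\,|\,v_0\rangle$, a single matrix coefficient, landing in degree~$0$ of $\Q^\bullet_{\bf E}(n)$. But the cobar complex sits in degrees $[1,n]$, so $P_n^1$ must target the degree-$1$ term $\mathcal{O}^*\otimes\mathcal{O}(n-2)$, and the big period map of \cite{G96} is not a scalar: it is
$$
P'_n(f^n,H,v_0;s)=\langle f^n\,|\,\mathcal{M}\otimes_\Q\mathcal{M}^{-1}\,|\,v_0\rangle
=\sum_k \langle f^n|\mathcal{M}|v_k\rangle\otimes\langle f^k|\mathcal{M}^{-1}|v_0\rangle\in\C\otimes_\Q\C,
$$
a sum of \emph{tensors} of matrix entries summed over intermediate basis vectors, which after twisting by $\exp(2\pi i\,\cdot)\otimes 2\pi i\cdot(\cdot)$ lands in $\mathcal{O}^*\otimes\mathcal{O}(n-2)$. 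Without this tensor (and the independence of the rational splitting, proved by conjugating $\mathcal{M}$ by a unipotent rational matrix), your $P_n^1$ is not even degree-preserving and cannot be part of a chain map.

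The formula for $P_n^k$ with $k\ge 2$ also does not match. The paper assembles $P_n^k(H_1\otimes\cdots\otimes H_k)$ by taking the big periods $P'_n(H_i)\in\mathcal{O}\otimes\mathcal{O}$ of each factor and joining them with a concatenation product $\ast:(\otimes^{a+1}\mathcal{O})\times(\otimes^{b+1}\mathcal{O})\to\otimes^{a+b+1}\mathcal{O}$ that \emph{multiplies at the junction}, then exponentiating the first $k$ slots and Tate-twisting the last. Every interior slot of the output therefore mixes contributions from two consecutive $H_i$'s. Your description — the first $k-1$ factors contribute only their exponential diagonal, the last contributes only its additive period — omits this joining mechanism, gives the wrong number of $\mathcal{O}^*$ factors ($k-1$ rather than $k$), and would not match the cobar differential. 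Relatedly, the chain-map verification hinges not on "multiplicativity of the Deligne splitting under tensor products" (that multiplicativity is used elsewhere, to prove that $P'$ is an algebra homomorphism on $\mathcal{H}_\ast$) but on a telescoping identity coming from $\mathcal{M}\mathcal{M}^{-1}=I$, combined with the fact that $\exp(2\pi i)=1$ annihilates the boundary terms that the identity matrix produces. Property~(3) is indeed a Griffiths-transversality statement — the matrix $\mathcal{M}^{-1}d\mathcal{M}$ has only amplitude-one entries — and the paper applies this directly term by term rather than through a Maurer--Cartan iteration, though your route could plausibly be made to work. The decisive missing idea is the tensor-valued big period and the $\ast$-product assembly.
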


A proof of Theorem \ref{periods1} is given in Section \ref{secmorper}.

\subsection{The period homomorphism of algebras ${\rm P}': {\cal H}_\ast \lra \C \otimes \C$}

This Section is an elaborate exposition of   Section 4 of \cite{G96}.

  \paragraph{1. The period operator and the period matrix.} Let $H$ be a mixed Hodge-Tate 
structure over $\Q$.  
Then there is an isomorphism 
 \begin{equation}\label{0.2} 
 H_{\C} = \oplus_{  p  } F^{p}H_{\C} \cap W_{2p}H_{\C}.  
 \end{equation} 
 Furthermore, the following canonical map is an isomorphism:  
 \begin{equation} \label{0.1}
 F^{p}H_{\C} \cap W_{2p}H_{\C} \stackrel{\sim}{\lra} gr^{W}_{2p}H_{\Q} \otimes_{\Q} \C.
 \end{equation}
 Using 
 isomorphisms 
 (\ref{0.1}) and (\ref{0.2}) we get  
  a  canonical morphism
 $$
 S_{HT}: \oplus_{  p  }  gr^{W}_{2p}H_{\Q} \lra H_{\C}.
 $$
  
 On the other hand a splitting of the weight filtration on $H_{\Q}$ also  provides us 
 a morphism 
 $$
 S_W: \oplus_{  p  }  gr^{W}_{2p}H_{\Q} \lra H_{\C}.
 $$
 Both maps became  isomorphisms when    extended   to  
 $\oplus_{  p  }  gr^{W}_{2p}H_{\C}$. Therefore a splitting 
 of the weight filtration on $H_{\Q}$ provides a   
 map, called the {\it period operator}: 
 $$
 S_{HT}^{-1} \circ S_W: \oplus_{  p  }  gr^{W}_{2p}H_{\C} \lra 
 \oplus_{  p  }  gr^{W}_{2p}H_{\C}.  
 $$

 Let $(f_{ n}, H, v_0) $ be a  Hodge-Tate structure over $\Q$,    
 framed by $\Q(0)$ and $\Q(n)$. 
Choose a splitting $s$ over $\Q$ of the weight filtration on $H_{\Q}$.
We define   the  period   of the splitted framed Hodge-Tate structure  
  $(f_{ n}, H, v_0; s)$     as the matrix coefficient of the {period operator}:
$$
 p(f_{ n}, H, v_0; s):= <v_0| S_{HT}^{-1} \circ S_W |f^n\rangle.
$$

  Choose  a basis  in each $\Q$-vector space $gr^{W}_{2p}H_{\Q}$, 
  providing a basis in their direct sum. 
  The {\it period matrix} is the matrix of the period operator  
    in this basis. 
   One can   define a mixed Hodge-Tate structure by exhibiting its period matrix. 
See an example below. 
  
  We define an equivalence relation on the set of all splitted  framed 
   $\Q$-Hodge-Tate structures  as the finest 
   equivalence relation for which  
   any morphism of mixed $\Q$-Hodge  structure $H \to H'$   respecting the 
   splittings and the frames is an equivalence.
   
   Let $\widetilde {\cal H}_n$ be the set of  equivalence classes of splitted $n$-framed 
   Hodge-Tate structures. Then $\widetilde {\cal H}_{\ast}:= 
   \oplus_n \widetilde {\cal H}_n$ is equipped in the usual way 
   with a structure of 
   a graded Hopf algebra. For instance $\widetilde {\cal H}_1 = \C \otimes \Q$.
    In particular  there is a coproduct map 
   $\Delta: \widetilde {\cal H}_{\ast} \to   \widetilde {\cal H}_{\ast}
    \otimes \widetilde {\cal H}_{\ast}$. 
   
   Let $H \to H'$ be a morphism 
   of   Hodge-Tate structures  respecting the frames and splittings. 
Then the periods of $H$ and $H'$ are the same, so  
 we get the period homomorphism 
$$
\widetilde p_n: \widetilde {\cal H}_{n} \to \C. 
 $$

\paragraph{2. The big period map.} Let $A$ and $B$ be operators in a $\Q$-vector space $V$. 
 Let $\{v_k\}$ be a $\Q$-basis in $V$, and $\{f^k\}$ be the dual basis.  
Define  
$$
\langle f^{n}| B \otimes_{\Q} A|v_0\rangle: = \sum_{v_k} \langle f^{n}| B |v_k\rangle \otimes_{\Q} 
\langle f^{k}| A |v_0\rangle  ~ \in  ~ \C \otimes_{\Q} \C, 
$$
where the sum is over all  basis vectors $v_k$.   
 It 
is well defined.

\bd Let $(f^n, H, v_0; s)$ be 
a splitted framed $\Q$-Hodge-Tate structure, and ${\cal M}$ the period operator on  
$\oplus_k gr^W_{-2k}H_{\Q}$. Then we set 
    \begin{equation}  \label{CCP}
   {\rm P}_n'(f^n, H, v_0; s):= \langle f^{ n}|   {\cal M} \otimes_{\Q}    {\cal M}^{-1}  |v_0\rangle \in \C\otimes \C.
\ee
\ed

 \begin{lemma} 
The element (\ref{CCP}) does not depend  on the choice of splitting. 
\end{lemma}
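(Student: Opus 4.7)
The plan is to track exactly how the period operator $\mathcal{M}$ transforms under a change of splitting, and then to use that the tensor product in (\ref{CCP}) is taken over $\Q$.

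First I would note that the map $S_{HT}$ is intrinsic to the mixed Hodge-Tate structure: it uses only the Hodge filtration together with the canonical isomorphisms (\ref{0.1}), and in particular does not involve the choice of splitting $s$. All the splitting-dependence of $\mathcal{M} = S_{HT}^{-1}\circ S_W$ is therefore concentrated in the factor $S_W$. If $s$ and $s'$ are two $\Q$-splittings of the weight filtration on $H_\Q$, then $U := S_W^{-1}\circ S_{W'}$ is a $\Q$-linear automorphism of $\bigoplus_p gr^W_{-2p}H_\Q$ whose associated graded is the identity; equivalently, $U = 1 + N$ with $N$ a rational, strictly weight-decreasing nilpotent operator. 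Consequently
$$
\mathcal{M}' = \mathcal{M}\circ U, \qquad (\mathcal{M}')^{-1} = U^{-1}\circ \mathcal{M}^{-1}.
$$

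Next I would plug this into the definition and expand in the chosen $\Q$-basis $\{v_k\}$ with dual basis $\{f^k\}$. Writing $U_{jk} = \langle f^j | U | v_k\rangle \in \Q$, one gets
$$
\langle f^n | \mathcal{M}'\otimes_{\Q}(\mathcal{M}')^{-1} | v_0\rangle
= \sum_{j,k,l} \langle f^n | \mathcal{M} | v_j\rangle\, U_{jk}\,\otimes_{\Q}\,(U^{-1})_{kl}\,\langle f^l | \mathcal{M}^{-1} | v_0\rangle.
$$
Because $U_{jk}$ and $(U^{-1})_{kl}$ are \emph{rational} scalars, they may be moved across the $\Q$-tensor product symbol and recombined inside a single factor. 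The inner sum collapses via $\sum_k U_{jk}(U^{-1})_{kl}=\delta_{jl}$, leaving
$$
\sum_{j} \langle f^n | \mathcal{M} | v_j\rangle \,\otimes_{\Q}\, \langle f^j | \mathcal{M}^{-1} | v_0\rangle
= \langle f^n | \mathcal{M}\otimes_{\Q}\mathcal{M}^{-1} | v_0\rangle,
$$
which is exactly ${\rm P}_n'(f^n, H, v_0; s)$.

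The only real point is the use of rationality of $U$; this is the whole reason the definition is framed over $\Q$ rather than $\C$. If we had worked with $\otimes_{\C}$, the statement would be trivial (both factors live in $\C$); the content is that the same cancellation works \emph{before} multiplying out, so we land in the smaller group $\C\otimes_\Q\C$. Once the rationality of the change-of-splitting matrix $U$ is in hand, the computation above is mechanical; the main conceptual obstacle -- identifying $S_{HT}$ as splitting-independent and $U$ as a $\Q$-matrix that is unipotent on the associated graded -- is exactly what carries the proof.
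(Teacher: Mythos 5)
Your argument is correct and is essentially the paper's own proof, unpacked in more detail: the paper states directly that the normalised period matrix for a different splitting is $\mathcal{M}\mathrm{N}$ with $\mathrm{N}$ a rational unipotent matrix and that $\langle f^n|\,\mathcal{M}\mathrm{N}\otimes_\Q(\mathcal{M}\mathrm{N})^{-1}\,|v_0\rangle = \langle f^n|\,\mathcal{M}\otimes_\Q\mathcal{M}^{-1}\,|v_0\rangle$, and your derivation of the rational change-of-splitting matrix from the splitting-independence of $S_{HT}$ together with the explicit basis expansion fills in exactly the computation that the paper leaves as an assertion.
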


\begin{proof} The normalised period matrix corresponding to a different splitting is given by 
${\cal M}{\rm N}$, where ${\rm N}$ is a rational unipotent 
upper triangular matrix. One has
$$
\langle f^{ n}| {\cal M}{\rm N}  \otimes_{\Q}  ({\cal M}{\rm N})^{-1} |v_0\rangle \quad = \quad 
\langle f^{ n}| {\cal M} \otimes_{\Q}    {\cal M}^{-1}|v_0\rangle. 
$$
\end{proof}

Notice that $\C\otimes_\Z\C$ is an algebra: $(a\otimes b)\cdot (a'\otimes b') = aa'\otimes bb'$. 

Lemma \ref{multbpm} tells that the big period map ${\rm P}_n'$ is multiplicative: 
it takes the tensor product of the splitted framed Hodge -Tate structures into the product in $\C\otimes_\Z\C$. 

 \begin{lemma} \la{multbpm}
Let ${\rm M}$ and ${\rm M}'$ be  splitted 
framed Hodge-Tate structures of weights $m$ and $m'$. Then 
$$
{\rm P}_{m+m'}'(f^m \otimes {f'}^{m'}, {\rm M} \otimes {\rm M}', v_0\otimes v_0'; s\otimes s') = 
{\rm P}_{m}'(f^m, {\rm M}, v_0; s) \cdot {\rm P}_{m'}'({f'}^{m'}, {\rm M'} , v_0'; s'). 
$$
\end{lemma}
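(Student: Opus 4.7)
My plan is to prove the identity by a direct expansion of the definition, exploiting the compatibility of the period operator with tensor products together with the product structure on $\C \otimes_\Q \C$.

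First I would observe that if ${\cal M}$ and ${\cal M}'$ are the period operators associated with the splittings $s$ and $s'$ on the graded pieces of ${\rm M}$ and ${\rm M}'$ respectively, then the splitting $s \otimes s'$ on the graded pieces of ${\rm M} \otimes {\rm M}'$ produces the period operator ${\cal M} \otimes {\cal M}'$. This is because both maps $S_{HT}$ and $S_W$ are tensor-compatible: the Hodge filtration, the weight filtration, and the rational splittings all behave multiplicatively under tensor product, so $S_{HT}({\rm M} \otimes {\rm M}') = S_{HT}({\rm M}) \otimes S_{HT}({\rm M}')$ and similarly for $S_W$. Consequently the inverse also factorizes: $({\cal M} \otimes {\cal M}')^{-1} = {\cal M}^{-1} \otimes {\cal M}'^{-1}$.

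Next I would pick bases $\{v_k\}$ in $\oplus_k gr^W_{-2k}{\rm M}_\Q$ and $\{v'_{k'}\}$ in $\oplus_{k'} gr^W_{-2k'}{\rm M}'_\Q$, and use the tensor product basis $\{v_k \otimes v'_{k'}\}$ for the graded pieces of ${\rm M} \otimes {\rm M}'$. Plugging this into the definition (\ref{CCP}), the left-hand side unfolds as
$$
\sum_{k,k'} \langle f^m \otimes {f'}^{m'} \mid {\cal M} \otimes {\cal M}' \mid v_k \otimes v'_{k'}\rangle \otimes_\Q \langle f^k \otimes {f'}^{k'} \mid {\cal M}^{-1} \otimes {\cal M}'^{-1} \mid v_0 \otimes v'_0\rangle.
$$
Since matrix coefficients of tensor products of operators factor into products of individual matrix coefficients, each summand is a product
$$
\bigl(\langle f^m\mid {\cal M}\mid v_k\rangle\, \langle {f'}^{m'}\mid {\cal M}'\mid v'_{k'}\rangle\bigr)\otimes_\Q \bigl(\langle f^k\mid {\cal M}^{-1}\mid v_0\rangle\, \langle {f'}^{k'}\mid {\cal M}'^{-1}\mid v'_0\rangle\bigr).
$$
Finally I would invoke the multiplication rule $(a \otimes b)(a' \otimes b') = aa' \otimes bb'$ in the algebra $\C \otimes_\Q \C$ to regroup each summand as the product of a factor depending only on $(k, {\rm M})$ and a factor depending only on $(k', {\rm M}')$. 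The double sum then factors as $\bigl(\sum_k \cdot\bigr) \cdot \bigl(\sum_{k'} \cdot\bigr)$, which by the definition (\ref{CCP}) is exactly ${\rm P}_m'(f^m, {\rm M}, v_0; s) \cdot {\rm P}_{m'}'({f'}^{m'}, {\rm M}', v'_0; s')$.

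The proof is essentially a bookkeeping exercise, so there is no serious obstacle; the only conceptual point that needs to be verified carefully is the tensor factorization of the period operator itself, which reduces to the fact that both the Hodge-Tate splitting and the rational splitting of the weight filtration are compatible with $\otimes$.
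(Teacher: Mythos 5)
Your proof is correct and follows the same route as the paper: identify the normalised period operator for ${\rm M}\otimes{\rm M}'$ with ${\cal M}\otimes{\cal M}'$, factor the matrix coefficients, and regroup using the algebra structure on $\C\otimes_\Q\C$. The paper states this more tersely (asserting the tensor factorization and calling the rest immediate), whereas you spell out the double sum over the tensor-product basis, but the underlying argument is the same.
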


\begin{proof} Let ${\cal M}$ (respectively ${\cal M}'$) be the normalised period matrix for the splitted framed Hodge-Tate structure
 ${\rm M}$ (respectively ${\rm M}'$). Then the normalised period matrix describing ${\rm M}\otimes {\rm M}'$ 
is just the tensor product ${\cal M}\otimes {\cal M}'$ of the normalised period matrices ${\cal M}$ 
and ${\cal M}'$. 
Evidently, 
$$
\langle f^p\otimes {f'}^q~|~ {\cal M}\otimes {\cal M}'~|~ e_0\otimes e_0'\rangle = 
\langle f^p|~ {\cal M}~|~ e_0\rangle 
\langle {f'}^q~|~ {\cal M}'~|~ e_0'\rangle. 
$$
The claim follows immediately from this remark.
\end{proof}

\bd The big period map ${\rm P}_n$ is the  composition of the map ${\rm P}'_n$ with the map 
\be \la{IMPPM}
\C \otimes_{\Q} \C \lra \C^* \otimes_{\Q} \C(n-2), \qquad a \otimes b  \lms 
{\rm exp} (2 \pi i \cdot a) \otimes 2 \pi i \cdot b \otimes (2 \pi i)^{n-2}. 
\end{equation}
\ed

Let $U$ be a complex domain. There is a  map
$$
 \omega: {\cal O}_U \otimes_{\Q} {\cal O}_U \lra \Omega^1_U, 
 \quad f \otimes g  \lms (d f)  g.
$$
\begin{theorem} \label{periods}
a) The map $ {\rm P}'_n$  
  is a  homomorphism of abelian groups ${\cal H}_n \lra \C \otimes_{\Q}\C$.

Given  
${\rm H}_m \in {\cal H}_m$ and ${\rm H}_n \in {\cal H}_n$ one has  
$$
{\rm P}'_{n+m}({\rm H}_m \otimes {\rm H}_n) = {\rm P}'_{m}({\rm H}_m) \cdot {\rm P}'_{n}( {\rm H}_n). 
$$
So the collection of the maps $ \{{\rm P}'_n\}$  gives rise to an algebra homomorphism
$$
{\rm P}': {\cal H}_\ast \lra \C \otimes_{\Q}\C. 
$$

b) The restriction   of the map ${\rm P}_n$ to  
 ${\rm Ker}( \Delta'_n)$ coincides with  the natural isomorphism
 \begin{equation}  \label{MAP}
\frac{\C}{(2 \pi i)^n\Q}\quad = \quad  \C_{\Q}^*\otimes (2 \pi i)^{n-1}.
 \end{equation}  

c) Let $H_U$ be a variation of framed mixed Hodge-Tate  
structures over a domain $U$. Then there is a section
$ 
  {\rm P}'(H_U ) \in   {\cal O}_U \otimes_{\Q} {\cal O}_U $,   
and the following composition is zero:
 $$ 
H_U  \stackrel{ {\rm P}'}{\lra} {\cal O}_U \otimes {\cal O}_U 
\stackrel{\omega}{\lra} \Omega^1_U, ~~~~\omega\circ {\rm P}'=0.
 $$ 
\end{theorem}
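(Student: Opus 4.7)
The plan is to establish the three parts by reducing each to a direct matrix computation with the normalized period matrix ${\cal M}$, together with its transformation under direct sum, tensor product, and the Gauss--Manin connection.

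For part (a), well-definedness under change of splitting is the lemma immediately preceding the theorem. Additivity reduces to the observation that for a direct sum $(f^n + \tilde f^n, H \oplus \tilde H, v_0 + \tilde v_0)$ one can take the direct-sum splitting, so the period matrix is block-diagonal ${\cal M}_H \oplus {\cal M}_{\tilde H}$; all cross-block contributions to (\ref{CCP}) vanish by duality and one obtains $P'_n(H) + P'_n(\tilde H)$. Multiplicativity is exactly Lemma \ref{multbpm}. Together these give the algebra homomorphism ${\rm P}': {\cal H}_\ast \to \C \otimes_\Q \C$.

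For part (b), I would reduce to an explicit $2 \times 2$ computation. An element of $\ker \Delta_n'$ is represented by a Hodge--Tate structure whose weight filtration has only the two graded pieces $gr^W_0 = \Q(0)$ and $gr^W_{-2n} = \Q(n)$, so in the basis $\{v_0, v_n\}$ the normalized period matrix is ${\cal M}(v_0) = v_0 + c\, v_n$, ${\cal M}(v_n) = v_n$, with $c \in \C/(2\pi i)^n \Q$ the extension class. A direct expansion of (\ref{CCP}) gives $P'_n = c \otimes 1 - 1 \otimes c$, and the map (\ref{IMPPM}) sends the first summand to $\exp(2\pi i c) \otimes (2\pi i)^{n-1}$ and kills the second, since $\exp(2\pi i) = 1$ is the identity in $\C^*_\Q$. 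This is, up to the fixed Tate twist, precisely the canonical isomorphism (\ref{MAP}).

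Part (c) is the main obstacle and is where the variational input enters. The first step is the identity
\[
\omega(P'_n(H_U)) = \sum_k d\langle f^n | {\cal M} | v_k \rangle \cdot \langle f^k | {\cal M}^{-1} | v_0\rangle = (d{\cal M} \cdot {\cal M}^{-1})_{n,0},
\]
which follows immediately from $\omega(f \otimes g) = (df)\,g$. The second step is to choose the rational splitting $s$ of $W$ to be \emph{flat}; this is possible because on a variation of mixed Hodge--Tate structures the Gauss--Manin connection preserves $W$ and the graded pieces $gr^W_{-2k}H$ are constant local systems $\Q(k)_U$. With this choice one checks that $-d{\cal M}\cdot {\cal M}^{-1}$ is the connection matrix of the Gauss--Manin connection written in the Hodge splitting basis $\{S_{HT}(v_k)\}$. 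The key geometric input is then Griffiths transversality combined with $W$-preservation: in the Hodge--Tate setting $F^p H_\C$ is spanned by those $S_{HT}(v_k)$ with $k \leq -p$ while $W_{-2k}H_\C$ is spanned by those $S_{HT}(v_\ell)$ with $\ell \geq k$, so the two conditions $\nabla F^p \subset F^{p-1}\otimes \Omega^1$ and $\nabla W_{-2k} \subset W_{-2k}\otimes \Omega^1$ together force the entry $A_{\ell, k}$ to vanish unless $\ell \in \{k, k+1\}$. In particular the $(n,0)$-entry vanishes for $n \geq 2$, giving $\omega \circ P'_n = 0$ as required.
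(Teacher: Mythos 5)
Your proposal is correct. For part (c) the mechanism is essentially the paper's: you identify $\omega({\rm P}'_n)$ with the $(n,0)$-entry of $d{\cal M}\cdot{\cal M}^{-1}$, recognize $-d{\cal M}\cdot{\cal M}^{-1}$ as the Gauss--Manin connection form in the Hodge splitting basis with respect to a flat rational splitting, and then use Griffiths transversality together with preservation of $W$ to kill every amplitude-$>1$ entry of that form. This is precisely the content of the paper's Theorem \ref{GRIFF}, parts (i)--(ii), and its use in Step 3 of Section \ref{secmorper}. One small notational remark: Theorem \ref{GRIFF} writes the connection form as $-{\cal M}^{-1}d{\cal M}$, whereas your derivation (which I have checked and concur with) gives $-d{\cal M}\cdot{\cal M}^{-1}$; these differ by conjugation by the unipotent ${\cal M}$, so they do not have the same amplitude-$>1$ entries in general, but the discrepancy is harmless here because your computation shows that $\omega({\rm P}'_n)$ is an entry of the latter matrix, and it is the latter matrix that your identification makes Griffiths transversality control. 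For part (a) you take a genuinely different route: additivity is verified directly from the block-diagonal form of the normalized period matrix of a direct sum, with multiplicativity cited from Lemma \ref{multbpm}, whereas the paper instead derives additivity from the Hopf-algebraic reformulation of ${\rm P}'_n$ via $m+\widetilde m$ (Lemma \ref{qwq}). Your route is more elementary and makes the group homomorphism property transparent; the paper's is more structural and packages the expansion (\ref{&&**})--(\ref{&&*}) in a single algebraic formula. For part (b), your explicit $2\times 2$ computation is a concrete and correct verification of what the paper dismisses as ``clear from definitions''.
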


  \begin{proof} To prove the part a) of Theorem \ref{periods} we rewrite 
the map ${\rm P}_n'$ in terms of  the Hopf algebra  ${\cal H}_{\ast}$. 
This is done in the Appendix. The second statement follows then from Lemma \ref{multbpm}. 

  b) Clear from the definitions. 

c) We will prove it  in paragraph 5 below.  
\end{proof}

 \paragraph{3. Explicit formulas.} Given a variation of splitted framed $\Q$-Hodge-Tate structure $H$, 
choose a basis $\{v_i\}$ over $\Q$ 
in each fiber of a variation. 
We assume that basis vectors $v_i$ are of pure weight ${\rm wt}(v_i)$. Denote by $\{f^i\}$ the dual basis. 
We use  notation $\langle  f| {\cal M} | v \rangle $ for $p(f, H, v;s)$. 

We usually assume that the framing is given by basis vectors $(v_0, f^n)$.

Set ${\cal M}= 1 + {\cal M}_0$. Since ${\cal M}_0$ is nilpotent, expanding 
$(1 + {\cal M}_0)^{-1} =  \sum_{k\geq 0}(-1)^k{\cal M}_0^k$ we get 
\be   \la{CCPV}
\langle f^{ n}|   {\cal M}  \otimes_{\Q}   {\cal M}^{-1}  |v_0\rangle  ~=~ \sum_{k\geq 0} (-1)^k\langle f^{n}| 
      {\cal M} \otimes_{\Q}   {\cal M}_0^k  | v_0\rangle.
   \end{equation}

By (\ref{CCPV}), 
  the big period of a 
  splitted framed Hodge-Tate structure $(H;  f^n, v_0; s)$ is
    $$
{\rm P}'(f^n, H, v_0; s) \in \C \otimes \C.
$$
\begin{equation}  \label{&&**}
      {\rm P}'(f^n, H, v_0; s) = \langle f^{ n}  | {\cal M} |  v_0\rangle \otimes 1 
            +   
    \end{equation}
 $$ 
  \sum_{k \geq 2}\sum_{0 < i_1 < ... < i_{k-1} < n}(-1)^{k-1}       
  \langle f^{ n}| {\cal M} |  v_{ i_{k-1}}\rangle     
\otimes    \langle f^{ i_{k-1}} | {\cal M} | v_{ i_{k-2}}  \rangle  \cdot 
  ... \cdot  \langle f^{ i_1} | {\cal M} |  v_0 \rangle 
  +
  $$ 
  \begin{equation}  \label{&&*}  
   \sum_{k \geq 1}\sum_{0 < i_1 < ... < i_{k-1} < n}(-1)^{k }  \cdot      
1  \otimes  
\langle f^{ i_{n}} | {\cal M} |  v_{ i_{k-1}} \rangle   \langle  f^{ i_{k-1}} | {\cal M} | v_{i_{k-2}}\rangle    \cdot 
  ... \cdot   \langle f^{ i_1} | {\cal M} |  v_0\rangle.  
  \end{equation} 
  The sum is over all nonempty chains of basis vectors $v_{ i} \in   gr^W_{-2i}H_{\Q}$,
   $ 0 < i < n$.

Since the term (\ref{&&*}) disappears after the 
projection  $\C \otimes_{\Q} \C \lra  \C^* \otimes_{\Q} \C(n-2)$, we have 
$$
(2\pi i)^{-n+2}{\rm P}(f^n, H, v_0; s) \in \C^*\otimes \C. 
$$
\begin{equation}  \label{&&**}
      (2\pi i)^{-n+2}{\rm P}(f^n, H, v_0; s) = {\rm exp}(2\pi i \cdot \langle f^{ n}| {\cal M} |  v_0 \rangle)  
\otimes 2\pi i +   
    \end{equation}
 $$ 
  \sum_{k \geq 2}(-1)^{k-1}\sum_{0 < i_1 < ... < i_{k-1} < n}        
  {\rm exp}(2\pi i \cdot \langle f^{ i_{n}} | {\cal M} |  
v_{ i_{k-1}}\rangle )\otimes  2\pi i \cdot \langle  f^{ i_{k-1}} | {\cal M} | v_{i_{k-2}}\rangle  \cdot 
  ... \cdot  \langle f^{ i_1} | {\cal M} |  v_0\rangle.
  $$

\paragraph{4. Examples.} 1. Let us define a Hodge-Tate structure  
$M$ using a normalised period matrix: 
$$  
{\cal M}:= \quad \left (\begin{array}{cccc}
1&&& \\
 x_1 & 1 &&\\ 
x_2 & y_1 &
     1&\\
x_3  & y_2 
     &   z_1 
       &1 \\\end{array}\right ),~~~~~~~~x_i, y_j, z_1\in \C. 
$$
 Let  
${\cal I}$  be the matrix of the operator acting by 
$(2 \pi i)^{-k}$ on $gr^W_{2k}H_{\Q}$. 
Then the period matrix is  
$$  
\widetilde {\cal M}:=   {\cal M}{\cal I}=
\quad \left( \begin{array}{cccc}
1& & & \\
2 \pi i \cdot x_1  & 2 \pi i &&  \\    
(2 \pi i)^2 \cdot x_2   & (2 \pi i)^2 \cdot  y_1  &
     (2 \pi i)^2 & \\
(2 \pi i)^3 \cdot x_3 & (2 \pi i)^3  \cdot  y_2 
  &    (2 \pi i)^3 \cdot  z_1
   &(2 \pi i)^3 \\ \end{array}\right ). 
$$ 
\paragraph{Remark.} The matrix $\widetilde {\cal M}$ is the period matrix which appear naturally in algebraic geometry. 
The normalized period matrix ${\cal M}$ is more convenient when we 
work with the big period. 

Precisely, if  $M$ is the Hodge 
 realization of a mixed Tate motive, 
   the entries of the canonical period matrix are  periods of rational algebraic differential 
forms over relative cycles. The $\widetilde {\cal M}$ is the matrix of 
the comparison isomorphism 
$M_{DR}\otimes \C \lra M_{Betti} \otimes \C$ in the natural $\Q$-bases in $M_{DR}$ and $M_{Betti}$. 

\vskip 3mm

Let $C_i$ be the $i$-th column of the matrix $\widetilde {\cal M}$. Let $e_{-j}$ be the 
column whose only non zero entry is $1$ on $j$-th place. 
We define the weight filtration $W_\bullet$ and the Hodge filtration $F^\bullet$  by
$$
W_{-6}M = \langle C_3\rangle_{\Q}, ~~ W_{-4}M = \langle C_2, C_3\rangle_{\Q}, ~~ W_{-2}M =
 \langle C_1, C_2, C_3\rangle_{\Q}, ~~
W_{0}M = \langle C_0, C_1, C_2, C_3\rangle_{\Q}. 
$$
$$
F^{0}M = \langle e_0\rangle, ~~ F^{-1}M = \langle e_0, e_{-1}\rangle, ~~ F^{-2}M 
= \langle e_0, e_{-1}, e_{-2}\rangle, ~~
F^{-3}M = \langle e_0, e_{-1}, e_{-2}, e_{-3}\rangle. $$

The   splitted Hodge-Tate structure $M$ has a framing given 
by $e_0$ and $(2 \pi i)^{-3} f^{3}$. Its period is $ x_3$. 
 The big period is
 \begin{equation}  \label{PP'} 
{\rm P}'_3(M) =       x_3   \otimes   1
 +     y_2  \otimes   (-x_1) 
 \end{equation}   
$$
 +     z_1  \otimes  ( - x_2 +  x_1  y_1)
        + 
1\otimes  ( - x_3 + x_1  y_2  + 
  x_2  z_1 -  x_1  y_1  z_1)  \in \C\otimes_{\Q} \C.
$$
   
The period ${\rm P}_3$ is given by 
$$
{\rm P}'_3(M) =        {\rm exp}(2 \pi i \cdot  x_3)   \otimes   2 \pi i 
 +   {\rm exp}(2 \pi i \cdot y_2)  \otimes  {\rm exp}(2 \pi i \cdot (-x_1))  
$$
$$
+   {\rm exp}(2 \pi i \cdot   z_1 ) \otimes   {\rm exp}(2 \pi i \cdot ( - x_2 +  x_1  y_1)).
$$

2. Here is a classical example of the period matrix for the  variation of Hodge-Tate structures related to the dilogarithm (due to Deligne):
$$
\widetilde {\cal L}_2 := \qquad \left( \begin{array}{ccc}
1&0 &  \\   
-{\rm Li}_1(z) & 2 \pi i &  \\     
 -{\rm Li}_2(z) & 2 \pi i\cdot \log z  &
     (2 \pi i)^2 \\ \end{array}\right ). 
$$

Then 
$$
{\rm P}'_2({\cal L}_2 ) = 
-\frac{{\rm Li}_2(z)}{(2 \pi i)^2}  \otimes 1 + \frac{\log z}{2\pi i}\otimes   \frac{\log(1-z)}{2\pi i} + 
1 \otimes \frac{{\rm Li}_2(z) - \log z \log(1-z)}{(2 \pi i)^2}\in \C \otimes \C. 
$$
$$
{\rm P}_2({\cal L}_2 ) = 
{\rm exp}(\frac{-{\rm Li}_2(z)}{2 \pi i})  \otimes 2 \pi i + z\otimes   \log(1-z)\in \C^* \otimes \C. 
$$
The invariant ${\rm P}_2({\cal L}_2 )$ was first written by S. Bloch [Bl3].  Generalizing it,  
 $Sym_{\Q}^{n-1} \C \otimes \C^*$-valued invariants of the Hodge-Tate structures related to classical $n$-logarithms where constructed in [BD]  and [Bl4]. However the approach 
of these papers is different from ours; it uses the  specific structure of the Hodge-Tate structures related to classical polylogarithms,   which can not be generalized to other mixed Tate motives.

\paragraph{5. Differential equations on periods and the Griffiths transversality condition.} A variation of mixed Hodge structures  satisfies the 
Griffiths transversality condition. We say that a partial period 
$\langle  f^{l}| {\cal M} |  v_{k}\rangle$, where $v_k \in gr^W_{-2k}H$ and 
$f^l \in (gr^W_{-2l}H)^*$, has {\it amplitude} $l-k$.  
 
\bt \label{GRIFF} Let ${\cal M }$ 
be a normalised period matrix of a variation of
splitted framed Hodge-Tate structures $(H_U; v_0, f^n; s)$. Then 
 
i) The connection $\nabla$ on the variation is given by 
\be \la{12.4.15.1}
\nabla(v_k) = -\sum_{\{v_{k+1}\}}\langle f^{ k+1}|  d  {\cal M} |v_k\rangle\cdot v_{k+1}.
\ee
The sum is over basis vectors $\{v_{k+1}\}$  of weight $-2(k+1)$. 

ii) The Griffiths transversality condition is equivalent to the following 
 differential equations on the entries of the normalised period matrix 
${\cal M }$:     
 \be \la{12.4.15.2}
 \langle f^{ k+s}|  {\cal M}^{-1} d  {\cal M}\Bigr|v_k\rangle ~ = ~ 0 ~~~~\forall s>1.
 \ee

iii) The period $\langle v_{0} | {\cal M} |  f^{n}\rangle$ 
satisfies a differential equation
\be \la{DE1}
d \langle f^{n}| {\cal M} |  v_{0} \rangle  = \sum_{\{v_{n-1}\}} \langle f^{n}| {\cal M} |  v_{n-1} \rangle
d\langle f^{n-1}| {\cal M} |  v_{0} \rangle. 
\ee

iv) The Griffiths transversality is equivalent to  differential  
  equations (\ref{DE1}) for all partial periods of amplitudes $\geq 2$. 
\et 

\begin{proof} The 
vectors $\sum_j\langle f^j | {\cal M} | v_i\rangle v_j$ are flat sections of the 
connection $\nabla$ on then variation:
$$
0 = \nabla \Bigl(\sum_j\langle f^j| {\cal M} | v_i \rangle v_j \Bigr) = 
\sum_j\langle f^j| {\cal M} | v_i \rangle \cdot \nabla (v_j) 
+ \sum_jd \langle f^j | {\cal M} | v_i\rangle \cdot v_j.
$$
Therefore
$$
\nabla(v_i) = -\sum_j\langle f^{ j}|  {\cal M}^{-1} d  {\cal M} |v_i\rangle\cdot v_j.
$$

i) To check (\ref{12.4.15.1}) notice that 
the only way ${\cal M}^{-1} d  {\cal M} $ can have a non-zero matrix coefficient 
of amplitude $1$ is that it is the matrix coefficient of amplitude $1$ of $d  {\cal M}$.

ii) The Griffiths transversality just means that all matrix coefficients of 
${\cal M}^{-1} d  {\cal M} $ of amplitude bigger then $1$ are zero, which is just what 
(\ref{12.4.15.2}) says. 

iii) - iv). Let us write, using (\ref{12.4.15.2}) and assuming $n>1$, 
$$
0 = \langle f^{n}| {\cal M}^{-1} d {\cal M} |  v_{0} \rangle = 
$$
$$
\langle f^{n}| d {\cal M} |  v_{0} \rangle  - \sum_{\{v_{n-1}\}} d\langle f^{n} | {\cal M} | v_{n-1} \rangle 
\langle  f^{n-1} | {\cal M} | v_{0}\rangle 
 + \sum_{k\leq n-2}\langle f^{n} |  {\cal M}^{-1} d  {\cal M} |v_k\rangle
\langle f^{k} | {\cal M} |  v_{0}\rangle. 
$$
The last summand is zero by (\ref{12.4.15.2}). So we get differential equation (\ref{DE1}), and the Claim iv). 
\end{proof}

 \paragraph{Remark.} Define a homomorphism $\Omega_n: \widetilde {  {\cal H}}_{n} \lra \Omega^1_U$ as the composition
$$
 \widetilde {  {\cal H}}_{n} \stackrel{\Delta_{n-1,1}}{\lra} 
\widetilde {  {\cal H}}_{n-1}\otimes 
 \widetilde {  {\cal H}}_1 \stackrel{p   d p}{\lra}\Omega^1_U.
$$ 
 The differential equation (\ref{DE1}) for the period $\langle f^{n} | {\cal M} | v_{0} \rangle$ 
can be rewritten as 
  \begin{equation} \label{DE11}
 d \langle f^{n}| {\cal M} |  v_{0} \rangle  =  \Omega_n(H_U;  f^n, v_0; s).
 \end{equation} 

\paragraph{6. The  big period map via the Hopf algebra  ${\cal H}_{\ast}$.} 
We use notation as in Section 2.1.
Projecting  to $ \otimes^pA_{>0}$ the coproduct and the  iterated coproduct, we get their 
  reduced versions: 
$$
\Delta':  A_{\ast} \lra A_{>0} \otimes A_{>0}, \quad {\Delta'}^{(p)}:  A \lra \otimes^p A_{>0}.
$$

For $n \geq 2$, let us consider  an algebra map 
$$
m_n: A  \lra A_{>0} \otimes A_{>0}
$$
 given by the $n$-iterated reduced coproduct followed by the 
product of the first $n-1$ factors: 
$$
A_{\ast}  \stackrel{
{\Delta'}^{(n)}}{\lra }   \otimes^{n-1} A_{>0}\otimes A_{>0}
  \stackrel{\mu^{(n-1)} \otimes id}{\lra } A_{>0} \otimes A_{>0}.
$$
Let $m_1: A_{\ast} \lra A_{\ast} \otimes A_{\ast}$, $a \lms 1 \otimes a $. Now set:
$$
m: A_{\ast} \lra A_{\ast} \otimes A_{\ast}, \quad m:= \sum_{n \geq 1}(-1)^{n-1 } m_n. 
$$

Let us define a map $\widetilde m: A_{\ast} \lra A_{\ast} \otimes 1 \hookrightarrow A_{\ast}\otimes A_{\ast}$ by setting
$$
\widetilde m:= \sum_{n \geq 1}^{\infty} (-1)^n \mu^{(n)} \circ \Delta^{(n)}: A_{\ast} \lra A_{\ast} = A_{\ast}\otimes 1 \hookrightarrow A_{\ast}\otimes A_{\ast}, \qquad  \Delta^{(1)} = \mu^{(1)} = {\rm id}. 
$$

We apply   this to the Hopf algebra ${\cal H}_{\ast}$. 
We 
get a map
$$
m + \widetilde m: {\cal H}_{\ast} \lra 
 {\cal H}_{\ast} \otimes {\cal H}_{\ast}. 
$$
The explicit formula for the map ${\rm P}_n'$ just means the following. 
\begin{lemma} \label{qwq}
The big period map ${\rm P}'_n$ is equal to a composition
$$
{\cal H}_{\ast}  \quad \stackrel{ m + \widetilde m}{\lra} ~
{\cal H}_{\ast} \otimes {\cal H}_{\ast} 
~ \stackrel{2 \pi i \cdot p \otimes 2 \pi i \cdot p}{\lra} ~ 
\C \otimes_{\Q} \C.   
$$
\end{lemma}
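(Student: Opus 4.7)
The strategy is a direct, term-by-term comparison after expanding both sides as sums of monomials in the matrix coefficients of the normalized period matrix $\mathcal{M}$ of a splitted framed Hodge--Tate structure $(f^n, H, v_0; s)$.

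For the left-hand side, I would start from the defining formula ${\rm P}'_n = \langle f^n | \mathcal{M} \otimes_\Q \mathcal{M}^{-1} | v_0\rangle$ and write $\mathcal{M} = I + \mathcal{M}_0$. Since $\mathcal{M}_0$ shifts weight strictly, it is nilpotent, so the Neumann series $\mathcal{M}^{-1} = \sum_{l \geq 0}(-1)^l \mathcal{M}_0^l$ is actually a finite sum. Inserting it into the definition produces precisely the expansion (\ref{&&**}): a sum indexed by chains $0 \leq i_1 < \ldots < i_{k-1} \leq n$ in which the left tensor factor is a single matrix coefficient $\langle f^n | \mathcal{M} | v_{i_{k-1}}\rangle$ and the right tensor factor is a signed product of matrix coefficients along the chain (and symmetrically with the roles of $\mathcal{M}$ and $\mathcal{M}^{-1}$ interchanged, giving the $1\otimes(\ldots)$ part and the $(\ldots)\otimes 1$ part).

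For the right-hand side, the key algebraic input is that the period map $p : \mathcal{H}_* \to \C$ is an algebra homomorphism. This follows from the tensor-functoriality of $S_{HT}$ and $S_W$: for splitted framed Hodge--Tate structures $M, M'$ the normalized period matrix of the tensor product is $\mathcal{M}_M \otimes \mathcal{M}_{M'}$, so the matrix coefficients at the tensored frame multiply. Combined with the explicit chain formula
$$
{\Delta'}^{(l)}(f^n, H, v_0) = \sum_{0 < i_1 < \ldots < i_{l-1} < n}(f^n, H, v_{i_{l-1}}) \otimes (f^{i_{l-1}}, H, v_{i_{l-2}}) \otimes \cdots \otimes (f^{i_1}, H, v_0),
$$
obtained by induction from the definition of $\Delta_{k, n-k}$, this reduces $(p \otimes p) \circ m_k(x)$ to a sum of chain monomials in which the single matrix coefficient $\langle f^n|\mathcal{M}|v_{i_{k-1}}\rangle$ sits on one side of the tensor and the product of the remaining matrix coefficients sits on the other. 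In the same way $(p \otimes p) \circ \widetilde{m}(x)$ collapses to $1 \otimes \langle f^n | \mathcal{M}^{-1} | v_0\rangle$: after using multiplicativity of $p$ to pull $\mu^{(k)}$ through $p$, the alternating sum over iterated reduced coproducts becomes exactly the Neumann identity for $\mathcal{M}^{-1}$ applied inside $\C$.

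Finally I would match signs: the factor $(-1)^{k-1}$ in $m = \sum_k(-1)^{k-1} m_k$ and the factor $(-1)^n$ in $\widetilde{m}$ reproduce the alternating signs of the Neumann expansion, so the two sides agree monomial by monomial. The main obstacle is not mathematical but notational: one has to keep straight the orientation of the tensor factors in the definitions of $m_k$ and $\widetilde m$, so that the ``single'' matrix-coefficient factor ends up on the $\mathcal{M}$-side and the ``product'' factor on the $\mathcal{M}^{-1}$-side of the tensor. Once this convention is fixed, the identity is immediate from the three ingredients above (multiplicativity of $p$, the chain formula for the iterated reduced coproduct, and the Neumann series for $\mathcal{M}^{-1}$).
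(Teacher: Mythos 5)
Your proof takes the same route as the paper: the paper's entire justification of Lemma~\ref{qwq} is the one-line remark that the Neumann-series expansion of ${\rm P}'_n$ derived in paragraph~3 \emph{is} the composition $(p\otimes p)\circ(m + \widetilde m)$ term by term, and your argument spells out exactly that comparison (multiplicativity of $p$ on splitted structures, the chain formula for ${\Delta'}^{(k)}$, and the Neumann series for ${\cal M}^{-1}$). Your caution about the orientation of the tensor factors is warranted and not mere pedantry: as literally stated, with $m_1(a)=1\otimes a$, $m_k=(\mu^{(k-1)}\otimes{\rm id})\circ{\Delta'}^{(k)}$, and $\widetilde m$ landing in $A_\ast\otimes 1$, one gets $(p\otimes p)\circ(m+\widetilde m)=\langle f^n|{\cal M}^{-1}\otimes{\cal M}|v_0\rangle$ rather than $\langle f^n|{\cal M}\otimes{\cal M}^{-1}|v_0\rangle$ (this can be checked directly against the weight-three example (\ref{PP'})), and $\widetilde m$ should be built from the \emph{reduced} iterated coproducts ${\Delta'}^{(n)}$ as you use --- so that $\widetilde m$ is the antipode and $p\circ\widetilde m$ returns the $(n,0)$-entry of ${\cal M}^{-1}$ --- not the full $\Delta^{(n)}$ displayed in the paper; mirroring those three conventions makes the identity exact.
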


Then ${\rm P}_n$ is the  composition 
\begin{equation} \label{P1}
{\cal H}_{\ast}  \quad \stackrel{ m }{\lra} \quad 
{\cal H}_{\ast} \otimes  {\cal H}_{\ast} 
\quad \stackrel{ p \otimes  p}{\lra} \quad 
\C \otimes_{\Q} \C \lra  \C \otimes_{\Q} \C^*(n-2).
\end{equation} 
The term (\ref{&&*}) 
corresponding to $ \widetilde m$ disappears after the projection  $\C \otimes_{\Q} \C \lra  \C \otimes_{\Q} \C^*(n-2)$.

\subsection{Construction of period morphisms and proof of Theorem \ref{periods1}} \la{secmorper}

\paragraph{Step 1. The map ${\rm P}_n^{\bullet}$.} 
 Let us define a homomorphism of abelian groups
\be \la{MPNK}
{P}_n^{k}: \otimes^k{  {\cal H}}_{\ast} \lra 
\underbrace{{\cal O}^* \otimes 
\ldots \otimes {\cal O}^*}_{\text{k times}}  \otimes  {\cal O}\otimes \underbrace{2\pi i \otimes \ldots \otimes 2 \pi i}_{\text{n-k-1 times}}.
\ee

First, there is an associative algebra structure on $\otimes^{\bullet -1} {\cal O}$ given by 
$$
(\otimes^{k+1} {\cal O}) * (\otimes^{l+1} {\cal O}) \lra \otimes^{k+l+1}{\cal O},
$$ 
$$
  (a_0 \otimes ... \otimes a_{k})  * 
(b_{0} \otimes ... \otimes b_{ l }) \lms a_0 \otimes ... \otimes a_{k} \cdot b_{0} \otimes ... \otimes b_l.   
$$

Let $H_i \in {  {\cal H}}_{\ast}$. We set 
$$
{{\rm P}'}_n^{k} (H_1 \otimes ...  \otimes H_k) :=  {P}'_n (H_1) * ... *  {P}'_n (H_k)  \in \otimes^{k+1} {\cal O}. 
$$
Next, consider a map 
\begin{equation} \label{EEX}
{\rm Exp}^{(k)}\otimes 2\pi i\cdot {\rm Id}:  ~~\otimes^{k+1} {\cal O} \lra 
\underbrace{{\cal O}^* \otimes 
\ldots \otimes {\cal O}^*}_{\text{k times}}\otimes  {\cal O}.
\end{equation}
$$
{\rm Exp}^{(k)}:=   \underbrace{{\rm exp}
( 2 \pi i\cdot *) \otimes  ... \otimes  {\rm exp}( 2 \pi i\cdot *)}_{\text{k times}}. 
$$
We define the map (\ref{MPNK}) by setting 
\begin{equation} \label{P_n}
{P}_n^{k}:= \underbrace{2\pi i \otimes \ldots \otimes 2 \pi i}_{\text{n-k-1 times}} \otimes 
\left ({\rm Exp}^{(k)}  \otimes (2\pi i\cdot {\rm Id} ) \right) \circ {{\rm P}'}_n^{k}. 
 \end{equation}

 \paragraph{Step 2. The maps $\{(-1)^k{P}_n^{k}\}$ provide a morphism of complexes.} 
Equivalently, we have to show that the following diagram is a bicomplex, 
where $\Delta'$ is the restricted coproduct: 
$$
\begin{array}{ccccc}
{  {\cal H}}_n  & \stackrel{\Delta'}{\lra} & 
({  {\cal H}} \otimes {  {\cal H}} )_n  & \stackrel{\Delta'}{\lra} ... 
\stackrel{\Delta'}{\lra} & \otimes^n {  {\cal H}}_1 \\
&&&&\\
\downarrow {\rm P}_n^{1} &&\downarrow {\rm P}_n^{2}   &&\downarrow {\rm P}_n^{n} \\
&&&&\\
 {\cal O}^*  \otimes {\cal O}(n-2)& \stackrel{d}{\lra} &  {\cal O}^*\otimes   {\cal O}^* \otimes {\cal O}(n-3)& \stackrel{d}{\lra} ... \stackrel{d}{\lra} & \otimes^n  {\cal O}^*\\
\end{array}
$$ 

Let us show  that the left square is anticommutative. 

For the restricted coproduct $\Delta'$, we have the following 
 element  in ${\cal O}  \otimes {\cal O} \otimes {\cal O}$: 
\begin{equation} \label{FORM111}
{{\rm P}'}_n^{2} (\Delta' ({\cal M})) = \sum 
\langle f^n|{\cal M}|v_l\rangle \otimes \langle f^l|{\cal M}^{-1}|v_m\rangle \cdot   \langle f^m|{\cal M}|v_k\rangle\otimes \langle f^k |{\cal M}^{-1}|v_0\rangle .
\end{equation}
The sum is over all basis vectors $v_i$ satisfying the following two conditions
\begin{equation} \label{FORM}
wt(v_n) \leq wt(v_l) \leq wt(v_m) \leq wt(v_k)  \leq 0. 
\end{equation}
\begin{equation} \label{FORM*}
wt(v_n)< wt(v_m) < 0. 
\end{equation}
Condition (\ref{FORM*}) results 
from taking the     restricted coproduct $\Delta'$ rather then the coproduct $\Delta$.  
 
Let us compute  the  image of element (\ref{FORM111}) 
under the map 
\be \la{POPO}
 {\rm Exp}^{(2)}  \otimes 2\pi i: {\cal O}  \otimes {\cal O} \otimes {\cal O}\lra {\cal O}^* \otimes {\cal O}^* 
  \otimes {\cal O},
\ee
$$
a \otimes b \otimes c \lms {\rm exp}(2\pi i a) \otimes {\rm exp}(2\pi i b)\otimes 2\pi i c. 
$$

Observe that since ${\cal M}{\cal M}^{-1} = I$, we have 
 \begin{equation} \label{FORM2}
\sum_{wt(v_l) \leq wt(v_m) \leq wt(v_k) } \langle f_l|{\cal M}^{-1}|v_m\rangle \cdot \langle f^m|{\cal M}|v_k\rangle = \delta_{0,l}.
 \end{equation}

Now there are three cases of the summation.

i) If $wt(v_k)<0$, and $wt(v_n)< wt(v_l)$, then thanks to  conditions 
(\ref{FORM}) - (\ref{FORM*}) and  formula (\ref{FORM2}), the corresponding sum in (\ref{FORM111}) 
collapses to 
\begin{equation} \label{FORM1KK}
\sum\langle f^n|{\cal M}|v_k\rangle\otimes 1 \otimes \langle f^k|{\cal M}^{-1}|v_0\rangle .
\end{equation}

ii) If $ wt(v_k)=0$, then, thanks to (\ref{FORM*})  the corresponding sum in (\ref{FORM111}) is
$$
\sum_{v_l} \sum_{v_m \not = v_0} \langle f^n|{\cal M}|v_l\rangle \otimes 
\langle f^l|{\cal M}^{-1}|v_m \rangle\cdot \langle f^m|{\cal M}|v_0\rangle  \otimes 1  \stackrel{(\ref{FORM2})}{= }
$$
 \begin{equation} \label{FORM3}
 - \sum_{v_l}  \langle f^n |{\cal M}|v_l\rangle\otimes  \langle f^l |{\cal M}^{-1}|v_0\rangle  \otimes 1.
\end{equation} 
Indeed, since $\langle f_0 |{\cal M} |v_0\rangle = 1$, formula (\ref{FORM2}) implies
$$
\sum_{v_m \not = v_0} \langle f^l|{\cal M}^{-1}|v_m \rangle\cdot \langle f^m|{\cal M}|v_0\rangle  = -   
\langle f^l|{\cal M}^{-1}|v_0 \rangle.
$$

iii) If $wt(v_l) = wt(v_n)$, we similarly get 
$$
 \sum_{v_m \not = v_n}1\otimes 
\langle f^n|{\cal M}^{-1}|v_m\rangle \cdot \langle f^m |{\cal M}|v_k\rangle 
\otimes \langle f^k|{\cal M}^{-1}|v_0\rangle .
$$
\begin{equation} \label{FORM4a}
- \sum_{v_k} 1 \otimes   \langle f^n |{\cal M}|v_k\rangle \otimes \langle f^k|{\cal M}^{-1}|v_0\rangle  .
\end{equation} 

Since ${\rm exp}(2\pi i)=1$ is the neutral element in ${\cal O}^*$, 
 (\ref{FORM1KK}) and (\ref{FORM4a}) contributes zero after applying  map (\ref{POPO}).
So applying the map (\ref{POPO}) to the expression (\ref{FORM3}) we get
$$
- \sum_{v_l}   {\rm exp}( 2\pi i \cdot \langle f^n|{\cal M}|v_l\rangle) \otimes 
{\rm exp}(2\pi i \cdot \langle f^l|{\cal M}^{-1}|v_0 \rangle)
\otimes  2\pi i . 
$$

On the other hand, by the definition 
of ${\rm P}_n'$ in (\ref{CCP}), 
$$
{{\rm P}_n'}({\cal M}) = \sum_{v_l}   \langle f^n|{\cal M}|v_l\rangle\otimes \langle f^l|{\cal M}^{-1}|v_0 \rangle.
$$
Therefore thanks to the definition 
of ${\rm P}_n = {\rm P}_n^{1}$ in (\ref{IMPPM}), and the definition of $d$ in 
(\ref{ana1*}),
$$
d\circ {\rm P}_n^{1}({\cal M}) =  
\sum_{v_l} {\rm exp}( 2\pi i \cdot \langle f^n|{\cal M}|v_l\rangle) \otimes   
{\rm exp}(2\pi i \cdot \langle f^l|{\cal M}^{-1}|v_0 \rangle)\otimes 2\pi i .
$$
We conclude that
$$
{\rm P}_n^{2}(\Delta' ({\cal M})) + d\circ {\rm P}_n^{(1)}({\cal M})=0.
$$

In general we have to check that the 
composition of the restricted coproduct   
$$
{\cal M}_1 \otimes ...  \otimes {\cal M}_k \lra \sum(-1)^{i-1} {\cal M}_1 \otimes ... \otimes  
\Delta' ({\cal M}_i) \otimes ...  \otimes {\cal M}_k 
$$
with the map ${\rm P}_n^{k+1} $ is equal to  $-d \circ {\rm P}_n^{k}({\cal M}_1 \otimes ...  \otimes {\cal M}_k)$.  Notice that  
$$
{\rm P}_n^{k+1} ({\cal M}_1 \otimes ... \otimes \Delta' ({\cal M}_i )\otimes ...  \otimes {\cal M}_k 
) = 0 \qquad if \quad  i> 1.
$$
Indeed, ${\cal M}_1 \otimes ... \otimes \Delta' ({\cal M}_i )\otimes ...  \otimes {\cal M}_k$ has three terms, just like 
(\ref{FORM1KK}), (\ref{FORM3}), and (\ref{FORM4a}). Each of them has the $j$-th factor 
$1$, where $j=i+2, i+1, i$. So each of them vanishes when we apply the 
${\rm Exp}^{(k)} \otimes (2\pi i \cdot {\rm Id}) $ map (\ref{EEX}).      
In the case $i=k$ only the very right factor survives, contributing 
 $-d \circ {\rm P}_n^{k}({\cal M}_1 \otimes ...  \otimes {\cal M}_k)$.

 \paragraph{Step 3. The composition $\Omega_n^{k}\circ {\rm P}_n^{k}=0$ for $k < n$.} 
It is enough to check an equivalent claim for 
$d{{\rm P}'_n}^{k}$. For $k=1$, 
Theorem \ref{GRIFF}ii) implies, since $n>1$,  
$$
d{{\rm P}'_n}^{1}({\cal M}) = \sum_k\langle f^n|d {\cal M}|v_k\rangle\otimes \langle f^k|{\cal M}^{-1}|v_0\rangle  =0. 
$$

For $k=2$ we have 
$$
d{{\rm P}'_n}^{2}({\cal M} \otimes {\cal N}) = 
$$ 
$$
\sum\langle f^n|d{\cal N}|v_l\rangle\otimes \langle f^l|{\cal N}^{-1}|v_m\rangle 
\cdot \langle f^m|d {\cal M}|v_k\rangle  \otimes \langle f^k|{\cal M}^{-1}|v_0\rangle  + 
$$
$$
\sum  \langle f^n|d {\cal N}|v_l\rangle \otimes
\langle f^l|d{\cal N}^{-1}|v_m\rangle \cdot \langle f^m|{\cal M}|v_k\rangle  
\otimes \langle f^k|{\cal M}^{-1}|v_0\rangle .
$$
Since $n>k=2$, either $m>1$ or $n-m>1$. Then in the first line is zero since the factor 
of amplitude $>1$ is zero by   Theorem \ref{GRIFF}ii). 
The second line is always zero. 

For general $k$ we proceed just as in the case $k=2$. The expression $d{{\rm P}'_n}^{k}$ consists of 
sums of $k$ factors. If one of them has two differentials, it is  zero.   
Otherwise each has just one differential, and one is of amplitude  $>1$, and so 
vanishes by Theorem \ref{GRIFF}ii). 
Theorem \ref{periods1} is proved.

\subsection{A variant: {\rm Lie}-exponential complexes and {\rm Lie}-period morphisms} \la{Sec2}

Let $X$ be a  manifold, either a real  
or a complex analytic one. 
\begin{definition} \label{ana5}
The   weight $n$   {\rm Lie}-exponential complex  $  \Q_{\cal E}^\bullet(n) $  is 
a complex of sheaves 
on $X$, concentrated in degrees $[0, n]$: 
\begin{equation} \label{ana1}
  {\cal O}(n-1) \lra \Lambda^2 {\cal O}(n-2) \lra ... 
\lra \Lambda^{n}{\cal O}
\stackrel{\wedge^n {\rm exp}}{\lra} \Lambda^{n}{\cal O}^*.
\end{equation} 
The differentials are given by 
$$
(2 \pi i)^{n-k} \otimes a_1 \wedge ... \wedge a_k \lms (2 \pi i)^{n-k-1}  \otimes 2 
\pi i \wedge a_1 \wedge ... \wedge a_k, ~~~~k<n,
$$
$$
a_1 \wedge ... \wedge a_n \lms {\rm exp}(a_1) \wedge ... \wedge {\rm exp}(a_n).
$$
\end{definition}

For example, 
the complex $  \Q_{\cal E}^\bullet(2)$ is
$$
 {\cal O}(1) \stackrel{\delta}{\lra} \Lambda^{2}{\cal O} 
\stackrel{\wedge^2 {\rm exp}}{\lra} \Lambda^{2}{\cal O}^*. 
$$ 

Take the $n$-th symmetric power of the 
complex $\Q(1) \hra {\cal O}$  in degrees $[0,1]$. 
It is augmented by the exponential map to   $\Lambda^{n}{\cal O}^*[-n]$. 
There is an isomorphism of complexes
\be \la{CEC1}
  \Q(n) \lra   \Q_{\cal E}^\bullet(n)  ~~= ~~
{\rm Cone}\Bigl({\rm Sym}^n\Bigl(\Q(1) \hra {\cal O}\Bigr) \stackrel{\wedge^n {\rm exp}}{\lra} \Lambda^{n}{\cal O}^*[-n]\Bigr).
\ee
Therefore  the complex (\ref{ana1}) is a resolution of the constant sheaf $  \Q(n)$.  

\paragraph{Mapping {\rm Lie}-exponential complexes to differential forms.}

Recall the holomorphic de Rham complex   $\Omega^\bullet$ on a complex manifold $X$. 
There is a natural map from the weight $n$ {\rm Lie}-exponential complex to 
 the holomorphic de Rham complex:
$$
\omega_n^\bullet: {  \Q}_{\cal E}^\bullet(n) ~~\lra ~~ \Omega^\bullet. 
$$
Precisely, we have the following Lemma, proved by a  simple check, which is left to a reader. 
\bl \la{FLe} There is 
a canonical morphism of  complexes of sheaves on $X$: 
$$
\begin{array}{ccccccc}
  {\cal O}(n-1) &    \lra &\Lambda^2{\cal O}(n-2) &\lra \ldots 
\lra & \Lambda^{n}   {\cal O} & \lra &\Lambda^{n}   {\cal O}^*\\
&&&&&& \\
\downarrow \omega^{(0)}_n  
& ... &\downarrow \omega^{(1)}_n && \downarrow \omega^{(n-1 )}_n&&\downarrow \omega^{(n )}_n\\
&&&&&& \\
 \Omega^0  &    
 \stackrel{d}{\lra}& \Omega^1  &  \stackrel{d}{\lra} \ldots  \stackrel{d}{\lra} &\Omega^{n-1}  & \lra &  \Omega^{n}_{  cl}
\end{array}
$$ 
Here 
$$
\omega^{(m)}_n\Bigl(  (2 \pi i)^{n-m-1} \otimes (f_0 \wedge f_1 \wedge ... \wedge f_m)\Bigr) := 
$$
$$
  (2 \pi i)^{n-m-1} m! ~
\sum_{j=0}^{m}(-1)^{j} f_j ~df_0  \wedge \ldots \wedge \widehat {d f_j} \wedge \ldots \wedge d f_m,   ~~~~~  0\leq m<n,
$$ 
$$
 \omega^{(n )}_n(f_1 \wedge
... \wedge f_n) := n!~d \log  f_1  \wedge ... \wedge d \log f_n.
$$
\el

\paragraph{{\rm Lie}-period morphisms of complexes.} 
The graded commutative Hopf algebra ${\cal H}_\ast$ 
gives rise to a graded Lie coalgebra $({\cal L}_\ast, \delta)$: 
$$
{\cal L}_\ast:= \frac{{\cal H}_{>0}}{{\cal H}_{>0} \cdot {\cal H}_{>0} }. 
$$
Let ${  {\cal L}  }^{\bullet}(n)$ be the weight $n$ part of the
 standard cochain complex of the graded Lie coalgebra ${\cal L}_\ast$:
$$
{  {\cal L}  }^{\bullet}(n):= ~~~~~~{\cal L}_n \stackrel{\delta}{\lra} (\Lambda^2 {\cal L})_n \stackrel{\delta}{\lra} 
 (\Lambda^3 {\cal L})_n \stackrel{\delta}{\lra}  \ldots 
$$

\begin{conjecture} \label{periods1small}
There exists a  {canonical} morphism of complexes of sheaves on $X$ 
$$
 p^{\bullet}_n:   {\cal L}^{\bullet}(n)  
\lra     \Q_{\cal E}^{\bullet}(n),
$$ 
called the  {{\rm Lie}-period morphism}:
\be \la{4.14.15.1}
\begin{array}{ccccccccc}
&&{  {\cal L}}_n  & \stackrel{\delta}{\to} & 
({  {\cal L}} \wedge {  {\cal L}} )_n  & \stackrel{\delta}{\to} ... 
\stackrel{\delta}{\to} & (\Lambda^{n-1}{  {\cal L}})_n&\stackrel{\delta}{\to}
& \Lambda^n {  {\cal L}}_1\\
&&&&&&&&\\
&&\downarrow p_n^{1} &&\downarrow p_n^{2}   &&\downarrow p_n^{n-1} &&\downarrow p_n^{n} \\
&&&&&&&&\\
{\cal O}(n-1)&\to &\Lambda^2{\cal O}(n-2) & \to & \Lambda^3{\cal O}(n-3) & \to ... \to & \Lambda^n  {\cal O}& \stackrel{\rm exp}{\to} &\Lambda^n  {\cal O}^*\\
\end{array}
\ee
such that 

\begin{enumerate}

\item After the identification  
${  {\cal L}}_1 = {\cal O}^* $ the map $p_n^{n}$ is the identity map.

\item The composition   $\omega_n^{\bullet}\circ  p_n^{\bullet}$ is zero everywhere except on the very right. 
\end{enumerate} 
\end{conjecture}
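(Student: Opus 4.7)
The plan is to build $p_n^\bullet$ by composing the period morphism $P_n^\bullet$ of Theorem \ref{periods1} with antisymmetrization on both source and target. The weight $n$ Chevalley--Eilenberg cochain complex $\mathcal{L}^\bullet(n)$ is quasi-isomorphic to the weight $n$ part of the reduced cobar complex of $\mathcal{H}_*$, via the canonical antisymmetrization chain map
$$
\alpha: (\Lambda^k \mathcal{L})_n \lra (\otimes^k \mathcal{H}_{>0})_n, \qquad \omega_1 \wedge \cdots \wedge \omega_k \lms \sum_{\sigma \in S_k} \mathrm{sgn}(\sigma)\, \widetilde \omega_{\sigma(1)} \otimes \cdots \otimes \widetilde \omega_{\sigma(k)},
$$
where $\widetilde \omega_i \in \mathcal{H}_{>0}$ are arbitrary lifts of $\omega_i \in \mathcal{L}$.

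On the target, I would drop the exponentiation map $\mathrm{Exp}^{(k)}$ used to construct $P_n^k$ in \eqref{P_n} and replace it by the antisymmetrization $\mathrm{Alt}_k : \otimes^k \mathcal{O} \to \Lambda^k \mathcal{O}$, producing
$$
\widetilde p_n^k := \underbrace{2\pi i \otimes \cdots \otimes 2\pi i}_{n-k \text{ times}} \otimes \mathrm{Alt}_k \circ {P'}_n^k : (\otimes^k \mathcal{H}_{>0})_n \lra \Lambda^k \mathcal{O}(n-k), \quad k<n,
$$
and composing with $\wedge^n \mathrm{exp}$ in the final step to handle $p_n^n$. The candidate Lie-period morphism is then $p_n^k := \widetilde p_n^k \circ \alpha$. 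The complex property would follow from a direct calculation parallel to Step 2 in Section \ref{secmorper}: among the three types of terms analogous to \eqref{FORM1KK}, \eqref{FORM3}, and \eqref{FORM4a} that arise in computing $\widetilde p_n^{k+1} \circ \Delta'$, the first and the last cancel after antisymmetrization either by $\mathcal{M} \mathcal{M}^{-1} = I$ or because they contain a repeated factor $2\pi i \wedge 2\pi i = 0$ in $\Lambda^\bullet \mathcal{O}$, while the middle-type terms combine with $d \circ p_n^k$ to give the desired equality $p_n^{k+1} \circ \delta = -d \circ p_n^k$. Properties (1) and (2) of the conjecture would then be immediate: $p_n^n$ restricts on $\Lambda^n \mathcal{L}_1$ to the antisymmetrized identity composed with $\wedge^n \mathrm{exp}$, and the vanishing $\omega_n^k \circ p_n^k = 0$ for $k<n$ would follow by the amplitude-counting argument of Step 3 in Section \ref{secmorper}, using Griffiths transversality (Theorem \ref{GRIFF}).

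The main obstacle I anticipate is showing that the composition $\widetilde p_n^k \circ \alpha$ is well-defined on the quotient $(\Lambda^k \mathcal{L})_n$ rather than only on antisymmetric tensors in $(\otimes^k \mathcal{H}_{>0})_n$ --- in other words, that it vanishes when one of the lifts $\widetilde\omega_i$ is replaced by a product $a \cdot b$ of positive-weight elements of $\mathcal{H}_*$. This should reduce to the multiplicativity of the big period map established in Theorem \ref{periods}(a): the period of a product factors as a product of periods, and after antisymmetrization these produce symmetric tensors killed by $\mathrm{Alt}_k$. Making this precise requires carefully tracking how the iterated reduced coproduct $\Delta'$ interacts with multiplication in $\mathcal{H}_*$ through the expansion underlying \eqref{&&**}, in a Koszul-dual manner. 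This interaction between Hopf algebra structure and antisymmetrization is where the bulk of the work lies, and is presumably why the statement appears as a conjecture rather than a theorem.
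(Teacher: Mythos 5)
The statement you are attempting to prove is stated as a Conjecture in the paper and is left unproved; Section 2.5 constructs only the first component $p_n^1$ of the would-be morphism, and does so by a mechanism essentially different from yours, namely precomposition with the ``logarithm'' $l=\sum_{p\ge1}\tfrac{(-1)^p}{p}\mu^{(p)}\circ\Delta^{(p)}$, not postcomposition with antisymmetrization. The well-definedness issue you correctly flag in your last paragraph is in fact fatal to your construction, and the reduction to multiplicativity you propose does not go through. Take $n\ge 3$, $a\in\mathcal H_1$, $b\in\mathcal H_{n-1}$. By Lemma~\ref{multbpm}, ${\rm P}'_n(ab)={\rm P}'_1(a)\cdot{\rm P}'_{n-1}(b)$, product taken componentwise in $\C\otimes\C$. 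Here ${\rm P}'_1(a)=x_a\otimes 1-1\otimes x_a$ is antisymmetric, but ${\rm P}'_{n-1}(b)$ is generally \emph{not}: already for $n-1=2$, with normalised period matrix entries $x_b,y_1,y_2$ one gets ${\rm P}'_2(b)=x_b\otimes1-y_2\otimes y_1+1\otimes(y_1y_2-x_b)$, whose symmetric part $\tfrac12\bigl(y_1y_2\otimes1+1\otimes y_1y_2-y_1\otimes y_2-y_2\otimes y_1\bigr)$ is nonzero. The componentwise product of an antisymmetric tensor with a tensor having nonzero symmetric part has nonzero antisymmetric part; an explicit computation gives ${\rm Alt}_2\bigl({\rm P}'_1(a){\rm P}'_2(b)\bigr)=-x_ay_2\wedge y_1+x_a\wedge y_1y_2+y_2\wedge x_ay_1+x_ay_1y_2\wedge 1$, which is nonzero for generic $x_a,y_1,y_2$. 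Hence ${\rm Alt}_2\circ{\rm P}'_n$ does not vanish on $(\mathcal H_{>0}\cdot\mathcal H_{>0})_n$ for $n\ge 3$, so your $p_n^1$ depends on the choice of lift and is not a map on $\mathcal L_n$. (For $n=2$ your argument does succeed, because ${\rm P}'_1(a)$ and ${\rm P}'_1(b)$ are both antisymmetric and the product of two antisymmetric tensors under the componentwise product is symmetric; this is a coincidence of low weight.)

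The paper's actual route is to kill products on the source rather than on the target: $l$ vanishes on $\mathcal H_{>0}\cdot\mathcal H_{>0}$, so $\mathcal P_n={\rm P}'_n\circ l$ descends to ${\rm CoLie}_n(\mathcal H_\ast)$. Even there, the paper only asserts, without full argument, that the image of $\mathcal P_n$ lies in $\Lambda^2\C$, and does not extend this single map to a morphism of the whole complex compatible with the differentials; that is exactly what Conjecture~\ref{periods1small} records as open, and what the weight-$\le 4$ formulas of Section~3 address by ad hoc explicit constructions rather than by a general mechanism. So your diagnosis at the end (``this is presumably why the statement appears as a conjecture'') is correct, but the specific patch you propose -- that multiplicativity implies symmetry of the product and hence vanishing under ${\rm Alt}_k$ -- is false. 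A repair would have to replace postcomposition with antisymmetrization by precomposition with $l$, establish that the resulting tensors lie in the exterior power, and then, most substantially, verify commutativity with the differential of the Chevalley--Eilenberg complex; none of these is supplied by Theorem~\ref{periods} alone.
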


The condition 2) just means that the following composition is zero:
$$
\begin{array}{ccccc}
{  {\cal L}}_n  & \stackrel{\delta}{\lra} & 
({  {\cal L}} \wedge {  {\cal L}})_n  & \stackrel{\delta}{\lra} ... 
\stackrel{\delta}{\lra} & (\Lambda^{n-1} {  {\cal L}})_n \\
\downarrow  &&\downarrow    &&\downarrow   \\
 \Lambda^2{\cal O}(n-2) & \lra & \Lambda^3{\cal O}(n-3) & \lra ... \lra &  \Lambda^n{\cal O}\\
\downarrow   &&\downarrow    &&\downarrow   \\
\Omega^1_U & \stackrel{d}{\lra} & \Omega^2_U &  \stackrel{d}{\lra} ... \stackrel{d}{\lra} &  \Omega^{n-1}_U
\end{array}
$$

Let us explain the meaning of  ``canonical'' in Conjecture \ref{periods1small}. 
A canonical map of complexes 
\be \la{CMCOMPL}
{\cal L}^\bullet(n) 
\stackrel{}{\lra}   \R_{\cal D}(n) 
\ee
was defined in \cite{G08}. There we defined, on the level of appropriate complexes,  a product 
$\R_{\cal D}(a) \otimes \R_{\cal D}(b) \lra \R_{\cal D}(a+b)$ making 
$
\oplus_{a=0}^\infty \R_{\cal D}(a)
$ 
into a DG commutative algebra.  The key property of 
the map (\ref{CMCOMPL}) is that its components describe a  map DG commutative algebras
$$
S^\bullet({\cal L}^\bullet[-1]) \lra \oplus_{a=0}^\infty \R_{\cal D}(a).
$$
So this map is completely determined its restriction to the Lie coalgebra ${\cal L}^\bullet$. 
The map $p_n^{\bullet}$, combined with a map $s_n^\bullet$ from
Section  6 
 or its  modification, 
 should deliver   canonical map (\ref{CMCOMPL}).

\subsection{The {\rm Lie}-period map} \la{sec4m} 
Recall the 
graded commutative Hopf algebra  over $\Q$ with a unit 
$A_{\bullet} = \oplus_{k=0}^\infty A_k$, see (\ref{theghaA}), coming with 
a product $\mu:  A_{\bullet} \otimes A_{\bullet} \lra A_{\bullet}$ and
 a coproduct 
$\Delta: A_{\bullet} \lra A_{\bullet} \otimes A_{\bullet}$.

Let $\mu^{(p)}: A_{\bullet}^{\otimes p} \lra A_{\bullet}$ be the product map:
 $a_1 \otimes ...  \otimes a_p \lms a_1 \cdot ... \cdot a_p$. 

Let us consider the iterated coproduct maps
$$
\Delta^{(p)}: A_{\bullet} \lra A_{\bullet}^{\otimes p}. 
$$
They are defined inductively:
$$
\Delta^{(p)}:= (\Delta \otimes {\rm Id}^{(p-2)})\circ \Delta^{(p-1)}, ~~~~
$$
$$
(\Delta \otimes {\rm Id}^{(p-2)})(a_1 \otimes \ldots \otimes a_{p-1}):= \Delta(a_1) \otimes a_2 \otimes \ldots \otimes a_{p-1}.
$$ 
Equivalently, they are dual to the product maps $\mu^{(n)}$ for the dual Hopf algebra.

Let us consider the following map:
$$
l: A_{\bullet} \lra A_{\bullet}, ~~~~l(M):= \sum_{p=1}^\infty \frac{(-1)^p}{p} \mu^{(p)}\circ \Delta^{(p)}(M).
$$
Elaborating this: 
$$
l:M \lms M - \frac{1}{2}\mu^{(2)}\circ \Delta^{(2)}(M) + \frac{1}{3}\mu^{(3)}\circ \Delta^{(3)}(M) -\ldots .
$$

The map $l$ has the following geometric interpretation. 
Denote by $G$ the pro-nilpotent group with the Lie algebra ${\rm Lie}(A_{\bullet})$. 
Then ${\cal O}(G) = A_{\bullet}$ as algebras. Let ${\rm Log}$ be the inverse of the exponential map. 
Then the map $l$ reads as follows: 
$$
l: {\cal O}(G) \lra {\cal O}(G), ~~~~l(F)(g):= \langle dF, {\rm Log}(g)\rangle.  
$$ 
So evidently the map $l$ is zero on $A_{>0}\cdot A_{>0}$. Therefore  we get a canonical map of graded spaces
$$
{\rm CoLie}(A_{\bullet}) \lra A_{\bullet}.
$$

Let us define a map, which we call the {\it {\rm Lie}-period map}:  
$$
{\cal P}_n:  {\rm CoLie}_n({\cal H}_\bullet) \lra \Lambda^2\C.   
$$
Consider the composition of the map $l$ with the big period map ${\rm P}_n'$: 
$$
{\cal P}_n =  {\rm P}'_n \circ l: {\cal H}_n \stackrel{l}{\lra} {\cal H}_n \stackrel{{\rm P}'_n}{\lra} \C \otimes \C. 
$$
\bp
The map ${\cal P}_n$ provides a  map 
$$
{\cal P}_n: {\rm CoLie}_n({\cal H}_\bullet) \lra \Lambda^2\C. 
$$
\ep

\begin{proof} The map ${\cal P}_n$ is a map 
$
{\rm CoLie}_n({\cal H}_\bullet) \to \C\otimes \C. 
$ 
We  need to check that its image lies in $\Lambda^2\C$. 
\end{proof}

\paragraph{Functions ${\rm L}_n(z)$ obtained from classical polylogarithms via the {\rm Lie}-period map.}
Set
$$
\sum_{k \geq 0} \beta_{k}t^k = \frac{t}{e^{t} -1}, ~~~~\beta_{k}:=  \frac{B_k}{k!}.
$$
So $\beta_{2m+1} = 0$ for $m\geq 1$, and  
$
\beta_{0} = 1, \beta_{1} = -\frac{1}{2}, \beta_{2} = \frac{1}{12}, \beta_{4} = -\frac{1}{720},  ...
 $ 
Let us consider a  function
$$
{\rm L}_n(z):= 
\sum_{k = 0}^{n-1}\beta_k {\rm Li}_{n-k}(z)\log^k z. 
$$
The right hand side is defined as follows. Take a path 
$\gamma$ from $a \in (0,1)$ to a point $z \in \C$ and continue analytically 
along this path the functions 
${\rm Li}_1(z), ... , {\rm Li_n}(z)$ using the inductive formula ${\rm Li}_m(z):= \int_{\gamma}
{\rm Li}_{m-1}(t) d\log t$. Then make the sum on the right hand using these brunches. 
So
$$
{\rm L}_2(z) = 
{\rm Li}_2(z) - \frac{1}{2}{\rm Li}_1(z) \log z.
$$%
$$
{\rm L}_3(z) = 
{\rm Li}_3(z) - \frac{1}{2}{\rm Li}_2(z) \log z + \frac{1}{12}{\rm Li}_1(z) \log^2 z.
$$
$$
{\rm L}_4(z) = 
{\rm Li}_4(z) - \frac{1}{2}{\rm Li}_3(z) \log z + \frac{1}{12}{\rm Li}_2(z) \log^2 z.
$$

The real version 
$
\pi_n\Bigl(\sum_{k = 0}^{n-1}\beta_k {\rm Li}_{n-k}(z)\log^k|z|\Bigr)
$ of the function ${\rm L}_n(z)$, 
where $\pi_n(a+ib) = a$ for odd $n$ and $ib$ for even $n$, 
was considered by Zagier in \cite{Za}, who showed that 
it is single valued. Its Hodge-theoretic interpretation  
was given by Beilinson and Deligne in \cite{BD}.

Denote by $\langle {\rm Li}_n(z)\rangle = (f^n, {\rm Li}_n(z), v_0)$ 
the $n$-framed Hodge-Tate structure assigned to the classical $n$-logarithm, whose normalised 
period matrix is given as follows: 
 $$
\left(\begin{array}{cccccc}
1&&&&&\\
-\frac{{\rm Li}_1(z)}{2\pi i}
&1&&&&\\
-\frac{{\rm Li}_2(z)}{(2\pi i)^2}&\frac{\log z}{2\pi i}&1&&&\\
-\frac{{\rm Li}_3(z)}{(2\pi i)^3}&\frac{\log^2 z}{2\cdot (2\pi i)^2}&\frac{\log z}{2\pi i}&1&&\\
-\frac{{\rm Li}_4(z)}{(2\pi i)^4}&\frac{\log^3 z}{3!\cdot (2\pi i)^3}&\frac{\log^2 z}{2\cdot (2\pi i)^2}&\frac{\log z}{2\pi i}&1&\\
-\frac{{\rm Li}_5(z)}{(2\pi i)^5}&\frac{\log^4 z}{4!\cdot (2\pi i)^4}&\frac{\log^3 z}{3!\cdot (2\pi i)^3}&
\frac{\log^2 z}{2\cdot 2\pi i}&\frac{\log z}{2\pi i}&1\\
\end{array}\right)
$$
Notice that in the normalised period matrix all entries are of weight zero. 

\bp The maximal period of $n$-framed Hodge-Tate structure $l\langle {\rm Li}_n(z)\rangle$ is:
$$
\langle f^n~|~ l\langle {\rm Li}_n(z)\rangle~|~ v_0 \rangle = -{\rm L}_n(z). 
$$
\ep

\begin{proof}
Let us do an example first, the 5-logarithm. 
The  calculation gives $(2\pi i)^{-5}$ times  
$$
{\rm Li}_5  - \frac{1}{2}\cdot \Bigl({\rm Li}_4(z) \log z + {\rm Li}_3(z) \frac{\log^2 z}{2} +  
{\rm Li}_2(z) \frac{\log^3 z}{3!} + {\rm Li}_1(z) \frac{\log^4 z}{4!}\Bigr)
$$
$$
+ \frac{1}{3}\cdot \Bigl({\rm Li}_3(z) \log^2 z + (\frac{1}{2}\cdot 1  +  1 \cdot \frac{1}{2})\cdot 
{\rm Li}_2(z) \log^3 z +  
(\frac{1}{3!}\cdot 1  +  1 \cdot \frac{1}{3!} + 
 \frac{1}{2}\cdot \frac{1}{2} )\cdot {\rm Li}_1(z) \log^3 z\Bigr)
$$
$$
- \frac{1}{4}\cdot \Bigl({\rm Li}_2(z) \log^3 z + (\frac{1}{2}\cdot 1 \cdot 1 +  1 \cdot \frac{1}{2}\cdot 1 + 
1 \cdot 1 \cdot \frac{1}{2} )\cdot {\rm Li}_1(z) \log^4 z\Bigr) +
 \frac{1}{5}\cdot {\rm Li}_1(z) \log^4 z.
$$
In general we need to sum the following series in $x$ (where $x=\log z$ in our application):
$$
S(x):= 
1-\frac{1}{2}\cdot (e^x-1) + \frac{1}{3}\cdot (e^x-1)^2 - \frac{1}{4}\cdot (e^x-1)^3 + \frac{1}{5}\cdot (e^x-1)^5 - ...  
$$
One has 
$ 
S(x)(e^x-1) = \log (1 + e^x-1) = x. 
$
Therefore 
$
S(x) = \frac{x}{e^x-1}.
$
\end{proof}

\paragraph{Calculating the monodromy.} 
Let $\gamma_0$ (resp. $\gamma_1$) be a small counterclockwise 
loop around $0$ (resp. $1$).  
Let $T_{\gamma_0}$ (resp. $T_{\gamma_1}$)  be the monodromy operator around loop 
$\gamma_0$ (resp. $\gamma_1$). 

\begin{lemma} \label{anal6} 
One has 
\begin{equation} \label{anal8}
\frac{1}{2\pi i} (T_{\gamma_1} -{\rm Id}): \quad {\rm L}_n(x) \lms -  (-1)^{n-1}\beta_{n-1} \log^{n-1}(x).
\end{equation}
\end{lemma}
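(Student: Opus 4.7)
The plan is to unwind the definition of ${\rm L}_n(x)$ and compute the monodromy term by term, then identify the resulting sum of Bernoulli-type coefficients with $(-1)^{n-1}\beta_{n-1}$ via a generating function identity.

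First I would recall the standard monodromy of the classical polylogarithms along a small counterclockwise loop $\gamma_1$ around $1$: namely $T_{\gamma_1}\log z = \log z$ (since $\gamma_1$ does not wind around $0$), and
\[
T_{\gamma_1}{\rm Li}_m(z) \;=\; {\rm Li}_m(z) \;-\; 2\pi i\cdot\frac{\log^{m-1} z}{(m-1)!}, \qquad m\geq 1.
\]
These follow by induction on $m$ from ${\rm Li}_m(z) = \int_a^z {\rm Li}_{m-1}(t)\,\tfrac{dt}{t}$ together with $T_{\gamma_1}{\rm Li}_1(z) = {\rm Li}_1(z) - 2\pi i$, and the fact that the integration operator $\int_a^z(\cdot)\,\tfrac{dt}{t}$ sends $\log^{m-2}z/(m-2)!$ to $\log^{m-1}z/(m-1)!$.

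Applying $T_{\gamma_1}-{\rm Id}$ to the defining sum ${\rm L}_n(z)=\sum_{k=0}^{n-1}\beta_k{\rm Li}_{n-k}(z)\log^k z$ and using that $\log^k z$ is $T_{\gamma_1}$-invariant, I obtain
\[
\frac{1}{2\pi i}(T_{\gamma_1}-{\rm Id})\,{\rm L}_n(z) \;=\; -\log^{n-1}(z)\cdot \sum_{k=0}^{n-1}\frac{\beta_k}{(n-k-1)!}.
\]
So the lemma reduces to the numerical identity
\[
\sum_{k=0}^{n-1}\frac{\beta_k}{(n-k-1)!} \;=\; (-1)^{n-1}\beta_{n-1}.
\]

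For this, set $f(t)=\sum_{k\geq 0}\beta_k t^k = t/(e^t-1)$. A direct manipulation gives
\[
f(t)\,e^t \;=\; \frac{t\,e^t}{e^t-1} \;=\; \frac{-t}{e^{-t}-1} \;=\; f(-t).
\]
Equating the coefficient of $t^{n-1}$ on both sides of $f(t)e^t = f(-t)$ yields exactly the identity above. The main obstacle, such as it is, is keeping track of the base point and the choice of branches so that the monodromy calculation for each ${\rm Li}_{n-k}$ composes cleanly; but this is automatic since the definition of ${\rm L}_n$ uses analytic continuation along a single common path $\gamma$, and the loop $\gamma_1$ is performed at the endpoint. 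This completes the proof.
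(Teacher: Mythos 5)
Your proof is correct and follows essentially the same route as the paper: compute the monodromy of each $\mathrm{Li}_{n-k}$ around $z=1$, factor out $\log^{n-1}z$, and identify the remaining numerical sum $\sum_{k=0}^{n-1}\beta_k/(n-k-1)!$ by a Bernoulli generating-function identity. The only cosmetic difference is that the paper phrases the final step via the Bernoulli polynomial expansion $B_m(t)=\sum\binom{m}{k}B_k t^{m-k}$ and the value $B_{n-1}(1)$, whereas you use the equivalent but slightly slicker functional equation $f(t)e^t=f(-t)$ for $f(t)=t/(e^t-1)$; both amount to evaluating the coefficient of $t^{n-1}$ in $te^t/(e^t-1)$. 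Your version also handles the sign $(-1)^{n-1}$ cleanly, where the paper's write-up of $B_m(1)$ contains a sign typo.
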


\begin{proof}
One has 
$$
\frac{1}{2\pi i} (T_{\gamma_1} -{\rm Id}): -{\rm Li}_n(z) \lms 
\frac{\log^{n-1} z}{(n-1)!}.
$$
From the definition of  the Bernoulli polynomials, 
$
\frac{x e^{tx}}{e^x -1} = \sum_{k=0}^{\infty} \frac{B_n(t)}{n!}x^n.
$ So 
$$
B_n(t) = \sum_{k=0}^{n} {n\choose k} B_kt^{n-k}, \quad \mbox{and} \quad B_n(1) = (-1)^{n-1}B_n 
\quad \mbox{for $n \geq 1$}.
$$
Therefore
$
\sum_{k=0}^{n-1} \frac{B_k}{k!(n-k-1)!} = \frac{B_{n-1}}{(n-1)!}.
$ 
Using this identity we get  the formula (\ref{anal8}). 
\end{proof}

{\bf Examples}. 
$$
 \frac{1}{2\pi i} (T_{\gamma_0}-{\rm Id}): \quad  \log x \lms 1, \quad {\rm L}_1(x) \lms 0, \quad {\rm L}_2(x) \lms  - 
\frac{1}{2}\cdot {\rm L}_1(x), 
$$
$$
{\rm L}_3(x) \lms - \frac{1}{2}\cdot {\rm L}_2(x)  -
\frac{1}{12}\cdot {\rm L}_1(x) \cdot \log x + \frac{1}{12}\cdot {\rm L}_1(x), 
$$
$$
{\rm L}_4(x) \lms  - \frac{1}{2}\cdot {\rm L}_3(x)  -
\frac{1}{12}\cdot {\rm L}_2(x) \cdot \log x + \frac{1}{12}\cdot {\rm L}_2(x).
$$

$$
\frac{1}{2\pi i} (T_{\gamma_1}-{\rm Id}): \quad  \log x \lms 0, \quad {\rm L}_1(x) \lms -1, \quad {\rm L}_2(x) \lms -
\frac{1}{2}\cdot \log x, \quad 
$$
$$
{\rm L}_3(x) \lms -\frac{1}{12}\cdot \log^2 x, \quad {\rm L}_4(x) \lms 0.
$$

\section{Period morphisms  on polylogarithmic motivic complexes of weights $\leq 4$.}

Given a field $F$, let us recall the inductive 
definition of the groups ${\cal B}_n(F)$ \cite{G91}. One can set ${\cal B}_2(F) = B_2(F)$. 
There is a map 
$$
\Z[F^*-\{1\}] \stackrel{\delta_n}{\lra} {\cal B}_n(F)\otimes F^*, ~~~~\{x\} \lms \{x\}_{n-1}\otimes x, ~~n>2. 
$$
Let us define a subgroup ${\cal A}_n(F) \subset {\rm Ker}~\delta_n$. 
Given an element $\sum_in_i\{f_i(t)\}$ in the kernel of $\delta_n$ for the field $F(t)$, 
the element $\sum_in_i(\{f_i(t_0)\} - \{f_i(t_1)\})$, where 
$t_0, t_1 \in F^* -\{1\}$, lies in ${\cal A}_n(F) $, and the subgroup 
${\cal A}_n(F)$ is generated by such elements.

\paragraph{Our goal.} We are going to construct, for $n\leq 4$, a morphism of complexes
\be \la{9.30.15.1}
\begin{array}{ccccccc}
{\cal B}_n(\C) &\lra &{\cal B}_{n-1}(\C)\otimes \C^* & \lra ... \lra & 
{\cal B}_{2}(\C)\otimes \Lambda^{n-2}\C^* &\lra &\Lambda^{n}\C^*\\
&&&&&&\\
\downarrow l_n^1 && \downarrow l_n^2 &&\downarrow l_n^{n-1}&&\downarrow l_n^n\\
&&&&&&\\
\Lambda^{2}\C (n-2)& \lra &\Lambda^{3}\C (n-3)&\lra ... \lra&\Lambda^{n}\C & \stackrel{\wedge^n{\rm exp}}{\lra} 
&\Lambda^{n}\C^*
\end{array}
\ee
such that its composition with $\omega^\bullet_n$
is zero, and 
the map $l_n^n$ is the identity.

\vskip 2mm
{\bf Remark.} 
If $n=4$ it will not be  the canonical map from Conjecture \ref{periods1small}. 
\vskip 2mm

We start with a few general observations which help to construct 
the map $l_n^\bullet$. 

\bp \la{htplat} Let ${\cal O}:= \C(t)$. Let us suppose 
that we have maps $l_n^1$ and $l_n^2$ such that:  

i) The following diagram commutative:
$$
\begin{array}{ccc}
\Z[{\cal O}^*-\{1\}] &\stackrel{\delta_n}{\lra}
 & {\cal B}_{n-1}({\cal O}) \otimes {\cal O}^*\\
&&\\
\downarrow l_n^1 &&\downarrow l_n^2 \\
&&\\
\Lambda^{2}{\cal O} (n-2)&\lra &\Lambda^{3}{\cal O} (n-3)
\end{array}
$$

ii) The following composition  is zero:
\be \la{comp}
\omega\circ l_n^1: \Z[{\cal O}^*-\{1\}] \lra \Lambda^{2}{\cal O} (n-2) \lra \Omega^1, ~~~~ 
\omega(f\wedge g) = fdg-gdf.
\ee
Then 
\be \la{mainpr}
l_n^1({\cal A}_n({\cal O})) =0.
\ee
\ep

\begin{proof} Consider the following diagram: 
$$
\begin{array}{ccccccc}
{\cal A}_n({\cal O})& \lra &{\rm Ker}~\delta_n& \lra &\Z[{\cal O}] &\stackrel{\delta_n}{\lra}
 & {\cal B}_{n-1}({\cal O}) \otimes {\cal O}^*\\
&&&&&&\\
&&\downarrow&&\downarrow l_n^1 &&\downarrow l_n^2 \\
&&&&&&\\
&&{\cal O} (n-2)& \lra &\Lambda^{2}{\cal O} (n-2)&\lra &\Lambda^{3}{\cal O} (n-3)
\end{array}
$$
Then 
$$
l_n^1({\rm Ker}~\delta_n) \subset {\rm Ker}\Bigl(\Lambda^{2}{\cal O} (n-2) 
\lra \Lambda^{3}{\cal O} (n-3)\Bigr) = {\cal O} (n-2).
$$
The kernel of the restriction of the map 
$\omega: \Lambda^{2}{\cal O} \lra \Omega^1, ~~ f\wedge g \lms fdg=gdf$ to the subgroup 
$2\pi i \wedge {\cal O} \subset \Lambda^{2}{\cal O}$ is $2\pi i \wedge \C$. 
Therefore, since the composition (\ref{comp}) is zero, we have $l_n^1({\rm Ker}~\delta_n) \subset \C$. 
This implies  (\ref{mainpr}). 
\end{proof}

\bl \la{htplat11} Let $U \subset \C^*-\{1\}$ be an open subset. Suppose 
that we have a commutative diagram 
$$
\begin{array}{ccc}
\Z[{U}] &\stackrel{\delta_n}{\lra}
 & {\cal B}_{n-1}({\C}) \otimes {\C}^*\\
&&\\
\downarrow l_n^1 &&\downarrow l_n^2 \\
&&\\
\Lambda^{2}{\C} (n-2)&\lra &\Lambda^{3}{\C} (n-3)
\end{array}
$$
where the maps $l_n^1, l_n^2$ are 
given by products of $\log z, {\rm Li}_k(z)$, and 
the following composition  is zero:
\be \la{comp1}
\omega\circ l_n^1: \Z[{U}] \lra \Lambda^{2}{\C} (n-2) \lra \Omega^1_{\C/\Q}, ~~~~ 
\omega(f\wedge g) = fdg-gdf.
\ee
Then the map $l_n^1$ extends to a well defined map on $\Z[\C]$.
\el

\begin{proof}
Since the map $l_n^1$ is given by polylogarithms, 
it can be analytically continued to a multivalued function on 
$\C^*-\{1\}$ with values in $\Lambda^{2}{\C} (n-2)$. we need to prove that this function is 
single-valued. Take the monodromy around some loop minus the identity map. 
We get a multivalued function on 
$\C^*-\{1\}$ with values in $\Lambda^{2}{\C} (n-2)$ which is annihilated by the map 
$\Lambda^{2}{\C} (n-2)\lra \Lambda^{3}{\C} (n-3)$, and thus takes values in 
$\C(n-1)$. Since it is also killed by $\omega$, it is a constant, and then 
one can easily see that it must be zero. 
\end{proof}



\paragraph{Non-associative $*$-product. }
Any ring $A$ provides a  $\ast$ - product  
$$
\Lambda^{k+1} A \ast \Lambda^{l+1} A \lra \Lambda^{k +l+1} A,
$$  
$$ 
  (a_0 \wedge ... \wedge a_{k})  * (b_{0} \wedge ... \wedge b_{ l }) := \sum (-1)^{k-j+i}
a_0 \wedge ... \wedge \widehat a_i  \wedge ... \wedge a_{ k} \wedge a_i\cdot b_{j} \wedge b_0 \wedge ...  \wedge \widehat b_j  \wedge ... \wedge b_l. 
$$   
For instance 
$$
(a_0 \wedge a_1) * (b_0 \wedge b_1) =    a_0 \wedge a_1   b_0 \wedge b_1 - 
a_1 \wedge a_0   b_0 \wedge b_1 + a_0 \wedge a_1   b_1 \wedge b_0 - a_0 \wedge a_1   b_1 \wedge b_0.
$$

If $A$ is a commutative ring, then $(\Lambda^{\bullet -1}A, \ast)$ 
is a supercommutative non-associative algebra:
$$
(a_0 \wedge ... \wedge a_m) * (b_0 \wedge ... \wedge b_n) = 
(-1)^{mn} (b_0 \wedge ... \wedge b_n) * (a_0 \wedge ... \wedge a_m). 
$$

We  define the $*$-product on $\Lambda^{\bullet -1}{\cal O} $  
using the following algebra structure on   
$ {\cal O}$:
$$
a\ast b:= \frac{1}{2\pi i} ab. 
$$

It is useful to note the following formula: 
$$
(2\pi i \wedge a_1 \wedge ... \wedge a_m)\ast(2\pi i \wedge b_1 \wedge ... \wedge b_n) = 
(m+n+1) \cdot 2\pi i \wedge a_1 \wedge ... \wedge a_m \wedge b_1 \wedge ... \wedge b_n.
$$


\paragraph{Example 1.} Let us define a homomorphism of complexes
$$
\begin{array}{ccc}
{\cal B}_2(\C)& \lra & \Lambda^2\C^* \\
&&\\
\downarrow {\Bbb L}_2&&\downarrow =\\
&&\\
\Lambda^2\C&  \lra &\Lambda^2\C^*
\end{array}
$$ 
$$
{\Bbb L}_2: \frac{1}{2} \cdot\{x\}_2  \lms  
2 \pi i \wedge \frac{1}{2\pi i } ~{\rm L}_2(x)  - \frac{1}{2} {\rm Li}_1(x) \wedge \log x.
$$

\paragraph{Example 2.} 
Set 
\be \la{lnnm1}
l_n^{n-1}: \{x\}_2 \otimes y_1 \wedge ... \wedge y_{n-2} \lms {\Bbb L}_2(x) 
\ast (2 \pi i \wedge \log y_1 \wedge ... \wedge \log y_{n-2}). 
\ee

\bl \la{oooo}
The map (\ref{lnnm1}) gives rise to a group homomorphism 
$$
l_n^{n-1}: {\cal B}_2(\C) \otimes \Lambda^{n-2}\C^* \lra \Lambda^n\C.
$$ 
It makes the following diagram commutative:
\be \la{lSDQ}
\begin{array}{ccc}
{\cal B}_2(\C)\otimes \Lambda^{n-2}\C^*& \lra & \Lambda^n\C^* \\
&&\\
\downarrow l_n^{n-1}&&\downarrow =\\
&&\\
\Lambda^n\C&  \stackrel{\rm exp}{\lra} &\Lambda^n\C^*
\end{array}
\ee 
The following composition is zero:
\be \la{10001}
\omega_n^{n-1}\circ l_n^{n-1}: 
{\cal B}_2(\C) \otimes \Lambda^{n-2}\C^* \lra \Lambda^n\C \lra \Omega^{n-1}_{\C/\Q}.
\ee
\el 

\begin{proof} The maps 
$y_1 \wedge \ldots \wedge y_m \lms 2 \pi i \wedge \log y_1 \wedge ... \wedge \log y_{m}$ and 
${\Bbb L}_2: {\cal B}_2(\C) \lra \Lambda^2\C$ are well defined group homomorphisms.  
 Therefore the map (\ref{lnnm1}) is  a well defined group homomorphism. 
The commutativity is evident. 

Let us check that the composition (\ref{10001}) is zero. 
We write $d\left((a_1\wedge a_2) \ast (b_1 \wedge ... \wedge b_m)\right)$ as a sum
 with certain coefficients $\lambda, \mu$, skewsymmetrising with respect to 
$\{a_1, a_2\}$ as well as  $\{b_1, ... , b_m\}$:
\be
\begin{split}
&\lambda \cdot {\rm Alt}_{(a_1, a_2), (b_1, ..., b_m)} 
\Bigl((b_1a_1)\wedge da_2 - d(b_1a_1)\wedge a_2\Bigr) \wedge db_2 \wedge ... \wedge db_m \\
&+ \mu \cdot {\rm Alt}_{(a_1, a_2), (b_1, ..., b_m)} 
\Bigl(d(b_1a_1)\wedge da_2 \Bigr) \wedge b_1 \wedge db_2 \wedge ... \wedge db_m. \\
\end{split}
\ee
In our case 
$$
a_1 da_2 - a_2 da_1=0, ~~~\mbox{and} ~~~db_1 \wedge db_2 \wedge ... \wedge db_m=0.
$$ 
The first condition 
 implies that the second line is zero. 
The first and second condition  imply that the first line is zero. 
\end{proof}

\paragraph{Example 3.} Let us define a homomorphism of complexes
$$
\begin{array}{ccccc}
{\cal B}_3(\C)& \lra &{\cal B}_2(\C)\otimes \C^* & \lra & \Lambda^3\C^* \\
&&&&\\
\downarrow l_3^1&&\downarrow l_3^2&&\downarrow =\\
&&&&\\
\Lambda^2\C(1)& \lra & \Lambda^3\C& \lra &\Lambda^3\C^*
\end{array}
$$ 

Set 
\be
\begin{split}
&l_3^2: \{x\}_2\otimes y \lms  \frac{1}{2}\cdot {\Bbb L}_2(\{x\}_2) \ast ( 2 \pi i \wedge \log y) = \\
&\Bigl( 2 \pi i \wedge \frac{1}{2\pi i}{\rm L}_2(x) - 
\frac{1}{2}\cdot {\rm L}_1(x) \wedge \log x\Bigr) \ast ( 2 \pi i \wedge \log y) = \\
&3 ~\cdot ~2 \pi i \wedge \frac{1}{2\pi i}{\rm L}_2(x)\wedge \log y -
 {\rm L}_1(x) \wedge \log x \wedge \log y \\
&+ \frac{1}{2}~\cdot~2\pi i \wedge  \Bigl(\frac{1}{2\pi i}\log y ~{\rm L}_1(x) \wedge \log x + 
{\rm L}_1(x) \wedge 
\frac{1}{2\pi i}\log y\log x\Bigr).
\end{split}
\ee

By Lemma \ref{oooo}, the map $l_3^2$ is 
well defined, makes the second square commute, and   
$\omega_3^2 \circ l_3^2 =0$.

Set 
$$
{\Bbb L}_3: -\frac{1}{6}\cdot \{x\}_3 \lms  \quad 2\pi i \wedge 
{\rm L}_3(x)  
-\frac{1}{2}\cdot \frac{1}{2\pi i}{\rm L}_2(x) \wedge \log x  
-  \frac{1}{12}\cdot  \Bigl({\rm L}_1(x) \wedge \log x\Bigr)\ast \log x. 
$$

One checks that $\omega_3^1 \circ l_3^1 =0$ thanks to the differential 
equations for the polylogarithms. 

This map makes the first square commutative. Indeed, we have 
$$
l_3^2: \{x\}_2\otimes x \lms 
2 \pi i \wedge \Bigl( 3 \cdot {\rm L}_2(x) \wedge \log x  + 
 \frac{1}{2}\cdot  \Bigl({\rm L}_1(x) \wedge \log x\Bigr)\ast \log x \Bigr). 
$$

Thanks to Lemma \ref{htplat11} the map ${\Bbb L}_3$ provides a  single-valued map 
$$
{\Bbb L}_3: \C^*-\{1\} \lra \Lambda^2\C(1). 
$$ 

Proposition 
\ref{htplat} implies that it gives rise to a homomorphism
$$
{\Bbb L}_3: {\cal B}_3(\C) \lra \Lambda^2\C(1). 
$$
Therefore we get a well defined  morphism of complexes. 


\paragraph{Example 4.} We define a homomorphism of complexes
$$
\begin{array}{ccccccc}
{\cal B}_4(\C)& \lra &{\cal B}_3(\C)\otimes \C^*  & \lra &{\cal B}_2(\C)\otimes \Lambda^2\C^* & \lra & 
\Lambda^4\C^* \\
&&&&&&\\
\downarrow l_4^1&&\downarrow l_4^2&&\downarrow l_4^3&&\downarrow l_4^4\\
&&&&&&\\
\Lambda^2\C(2)& \lra & \Lambda^3\C(1)& \lra & \Lambda^4\C& \lra &\Lambda^4\C^*
\end{array}
$$

We set
$$
l_4^3(\{x\}_2 \otimes y_1 \wedge y_2):=  
{\Bbb L}_2(x) \ast (2 \pi i \wedge \log y_1 \wedge \log y_2) = 
$$
$$
   \Bigl(2 \pi i \wedge \frac{1}{2\pi i}{\rm L}_2(x) - 
\frac{1}{2 }{\rm L}_1(x) \wedge \log x\Bigr)\ast (2 \pi i \wedge \log y_1 \wedge \log y_2).
$$
By Lemma \ref{oooo} the map $l_4^3$ is 
well defined, makes the last square commute, and $\omega_4^3\circ l_4^3=0$. 

Next, set 
$$
l_4^2(\{x\}_3 \otimes y):= 
$$
\be
\begin{split}
&2 \pi i \wedge \Bigl( -12\cdot \frac{1}{(2\pi i)^2}{\rm L}_3(x) \wedge \log y - 
2\cdot (\frac{1}{2\pi i}{\rm L}_2(x) \wedge \log x) \ast \log y\\
  &   -\frac{1}{2} \cdot \frac{1}{2\pi i} \Bigl({\rm L}_1(x)  \log x\Bigr) \wedge \log x \log y  -
  \frac{1}{2} \cdot \frac{1}{2\pi i} \Bigl({\rm L}_1(x)  \log y\Bigr) \wedge (\log x)^2 \Bigr) \\
&+4\cdot \frac{1}{2\pi i}{\rm L}_2(x) \wedge \log x  \wedge \log y   + 
\frac{1}{2} \cdot \Bigl({\rm L}_1(x) \wedge \log x \Bigr) \ast \Bigl(\log x  \wedge \log y\Bigr).\\  
\end{split}
\ee

Direct check shows that the 
middle square has all the desired properties. 

Finally, we set

$$
{\Bbb L}_4: \frac{1}{24}\cdot \{x\}_4 \lms  
$$
\be
\begin{split}
&2 \pi i \wedge \frac{1}{(2\pi i)^3}{\rm L}_4(x) 
- \frac{1}{2} \cdot \frac{1}{(2\pi i)^2}{\rm L}_3(x) \wedge \log x\\  
&-\frac{1}{12} \cdot \Bigl(\frac{1}{2\pi i}{\rm L}_2(x) \wedge \log x\Bigr) \ast \log x 
- \frac{1}{24} \cdot \frac{1}{2\pi i}\Bigl({\rm L}_1(x) \log x\Bigr) \wedge \log^2x.\\
\end{split}
\ee

One checks that the left square is formally 
commutative. Thanks to Lemma \ref{htplat11} and Proposition 
\ref{htplat} the map $l_4^1:= {\Bbb L}_4$ is a well defined homomorphism of abelian groups. 
Finally, we check that $\omega_4^1 \circ l_4^1=0$ by using the differential equations for the 
classical polylogarithms. 

\paragraph{Example: the regulator map on the weight three motivic complex.} 

Let $X$ be a regular complex projective curve. Then the  
motivic complex $\Z_{\cal M}(X;3)$ is the total of the following complex, where 
${\cal O}_{\cal X}:= \C(X)$ and 
 ${\rm Res}$ stands for the tame symbol on the right and the map 
$\{f\}_2 \otimes g \lms \sum_{x\in X}{\rm val}_x(g) \{f(x)\}_2$ in the middle: 
\be \la{motcom}
\begin{array}{ccccc}
{\cal B}_3({\cal O}_{\cal X}) & \stackrel{\delta}{\lra} &{\cal B}_2({\cal O}_{\cal X}) \otimes {\cal O}_{\cal X}^*&\stackrel{\delta}{\lra} &\Lambda^3{\cal O}_{\cal X}^*\\
&&&&\\
&&\downarrow {\rm Res}&&\downarrow {\rm Res}\\
&&&&\\
&&\coprod_{x\in X}{\cal B}_2(\C)  &\lra &\coprod_{x\in X}\Lambda^2\C^*
\end{array}
\ee
The top line is mapped to the weight three {\rm Lie}-exponential complex at the generic point ${\cal X}$:
$$
\begin{array}{ccccccccc}
&&{\cal B}_3({\cal O}_{\cal X}) & \stackrel{\delta}{\lra} &{\cal B}_2({\cal O}_{\cal X}) \otimes {\cal O}_{\cal X}^*&\stackrel{\delta}{\lra} &
\Lambda^3{\cal O}_{\cal X}^*\\
&&\downarrow &&\downarrow &&\downarrow \\
{\cal O}_{\cal X}(2)& \lra &\Lambda^2{\cal O}_{\cal X}(1)&\lra &\Lambda^3{\cal O}_{\cal X}& \lra & \Lambda^3{\cal O}_{\cal X}^*\\
\end{array}
$$
An important property of the {\rm Lie}-period is that the element  
${\Bbb L}_3(f(x)) \in \Lambda^2{\cal O}_{\cal X}(1)$ is non-singular:
$$
{\Bbb L}_3(f(x)) \in \Lambda^2{\cal O}(1). 
$$
So there is a  map 
$$
{\cal B}_3({\cal O}_{\cal X}) \lra \Lambda^2{\cal O}(1). 
$$

The element $l_3^2(\sum \{f_i(x)\}_2 \otimes g_i(x)) \in \Lambda^3{\cal O}_{\cal X}$ can have singularities 
at the divisors  of the functions $g_i$. 
To guarantee that the singularity at $y\in X$ is absent it is sufficient 
to require that the residue of that element at $y$ is zero. 
So there is a map on the kernel of the residue map: 
$$
{\rm Ker}\Bigl({\cal B}_2({\cal O}_{\cal X}) \otimes {\cal O}_{\cal X}^* 
\stackrel{\rm Res}{\lra} \coprod_{x\in X}{\cal B}_2(\C)\Bigr) \lra \Lambda^3{\cal O}. 
$$
So  we get a map  
\be
\begin{split} &Z^2({\rm B}^\bullet(X; 3)) := {\rm Ker}\Bigl({\cal B}_2({\cal O}_{\cal X}) \otimes {\cal O}_{\cal X}^*
\stackrel{}{\lra} \coprod_{x\in X}{\cal B}_2(\C)\oplus \Lambda^3{\cal O}_{\cal X}^*\Bigr) \\
&\lra {\rm Ker}\Bigl(\Lambda^3 {\cal O} \stackrel{\wedge^3{\rm exp}}{\lra} 
\Lambda^3 {\cal O}^*\Bigr).
\end{split}
\ee

It gives rise to an explicit map
$$
H^2({\rm B}^\bullet(X; 3)) \lra H^2(X, \Gamma_{\cal D}(3)).
$$
It can be described explicitly as follows. Take a cycle $A\in Z^2({\rm B}^\bullet(X; 3)) $.  
Then we have 
$$
l_3^2(A) \in {\rm Ker}\Bigl(\Lambda^3 {\cal O} \stackrel{\wedge^3{\rm exp}}{\lra} 
\Lambda^3 {\cal O}^*\Bigr) = \Lambda^2 {\cal O}(1).  
$$
Pick an open cover $\{U_i\}$ of $X$ by small discs. 
On each cover we get can find an element 
$$
C(U_i) \in \Lambda^2{\cal O}_{U_i}(1): ~~dC(U_i) = A_{|U_i} \in \Lambda^3{\cal O}_{U_i}. 
$$
Then  $d (C(U_i)-C(U_j))=0$ on $U_{ij}$. So we can 
find a $C(U_i, U_j) \in {\cal O}_{U_{ij}}(2)$ such that $dC(U_i, U_j) = C(U_i)-C(U_j)$. 
Similarly we find $C(U_i, U_j, U_k) \in \Q(3)$. Now taking the image of the cocycle 
$(C(U_i), C(U_i, U_j), C(U_i, U_j, U_k))$ in the {\rm Lie}-exponential Deligne complex 
$  \Gamma_{\cal D}(X;3)$ we get a cycle representing the regulator of $A$. 

Unlike the de Rham complex,   the exponential complex is exact in a trivial way:  
finding a primitive does not require  integration. 
So our construction is effective. 

One can generalize the above construction to the case when $X$ is an arbitrary 
regular complex variety. In this case the motivic complex 
we use is the Gersten resolution of the weight three polylogarithmic complex. 
It is obtained by adding to (\ref{motcom}) the contributions of the codimension two and three cycles. 
The construction remains the same. 


\section{A local combinatorial construction of characteristic classes}



 \subsection{A map: decorated flags  complex$~\to~$ Bigrassmannian complex} \label{sec3.2.4}

\paragraph{Configuration complexes.} Let $X$ be a set. Let $G$ be a group acting on $X$.
{\it Configurations} of $m$ elements in $X$ are orbits of the group $G$ acting on $X^m$. 
The complex of configurations $C'_\ast(X)$ is the complex of the $G$-coinvariants of the chain complex of the
simplex with the vertices parametrized by  $X$:
$$
\stackrel{d}{\lra} C'_m(X) \stackrel{d}{\lra} C'_{m-1}(X) \stackrel{d}{\lra}\ldots \stackrel{d}{\lra} C'_1(X). 
$$
So  $C'_m(X)$ 
is the free abelian group generated by configurations. Denote by $(x_1, ..., x_m)$ the generator 
provided by the configuration  corresponding to 
the $G$-orbit of an $m$-tuple  $\{x_1, ..., x_m\}$.   
The differential is 
$$
d: C'_{m+1}(X) \lra C'_m(X), ~~~~
(x_0, ..., x_m) \lms \sum_{i=0}^{i}(-1)^i(x_0, ..., \widehat x_i, ... , x_m).
$$
\vskip 2mm

Let us assume now that $X$ is an algebraic variety over $\Z$, and $\G$  an algebraic group over $\Z$ acting 
on $X$. Then for any field $F$ 
there is a $\G(F)$-set  $X(F)$. So we get complexes of 
configurations of $X(F)$. Abusing notation, we skip the field $F$ from the notation.

Suppose that we have a notion of generic configurations of points 
in $X$, stable under the operation of 
forgetting a point. We assume that generic configurations of $m$ points in 
 $X$ are parametrised by a variety 
${\rm Conf}_m^*(X)$. So forgetting the $i$-th point provides a  map 
$$
f_i: {\rm Conf}_m^*(X) \lra {\rm Conf}_{m-1}^*(X). 
$$
Consider the free abelian group generated by the $F$-points of ${\rm Conf}^*_m(X)$. 
$$
C_m(X):= \Z[{\rm Conf}^*_m(X)(F)].
$$
We get  a subcomplex of the complex $C'_\bullet(X)$, called the {\it complex of  generic configurations}:
$$
C_\bullet(X) :~~~~~~\stackrel{d}{\lra} C_m(X) \stackrel{d}{\lra} C_{m-1}(X) \stackrel{d}{\lra}\ldots \stackrel{d}{\lra} C_1(X). 
$$

\paragraph{An example: Grassmannian complexes \cite{Su84}.} Let ${\rm Conf}^*_m(q)$ be 
the variety of generic configurations of $m$ vectors in a vector space of dimension $q$. 
A configuration  is generic if any $k\leq q$ of the vectors are linearly independent. 
Observe that the configuration spaces assigned to isomorphic vector spaces are {\it canonically} isomorphic. 

The variety ${\rm Conf}^*_m(q)$ is defined over ${\rm Spec}(\Z)$: a collection of generic vectors is given by a 
$q \times m$ matrix with non-zero principal minors. 
So we get abelian groups 
$$
C_m(q):= \Z[{\rm Conf}^*_m(q)(F)]. 
$$
They form  the 
{\it weight $q$ Grassmannian complex}:
$$
\stackrel{d}{\lra} C_m(q) \stackrel{d}{\lra} C_{m-1}(q) \stackrel{d}{\lra}\ldots \stackrel{d}{\lra} C_1(q). 
$$

\paragraph{The Bigrassmannian \cite{G93}.} 
Given a configuration of $(m+1)$ vectors $(l_0, ..., l_m)$ in a $q$-dimensional vector space 
$V_q$, there are two 
ways to get a configuration 
of $m$ vectors:
\begin{enumerate}

\item Forgetting the $i$-th vector $l_i$, we get a map 
$$
f_i: {\rm Conf}^*_{m+1}(q) \lra {\rm Conf}^*_{m}(q), ~~~~(l_0, ..., l_m)\longmapsto 
(l_0, ..., \widehat l_i, ..., l_m).
$$

\item Projecting the vectors $(l_0, \ldots, \widehat l_j, \ldots , l_m)$ to the quotient $V_q/(l_j)$ by the subspace spanned by $l_j$, we get a map 
$$
p_j: {\rm Conf}^*_{m+1}(q) \lra {\rm Conf}^*_{m}(q-1), ~~~~(l_0, ..., l_m)\longmapsto 
(l_j~|~ l_0, ..., \widehat l_i, ...  l_m). 
$$
\end{enumerate}

Denote by ${\bf G}_m(q)$ the Grassmannian 
of $q$-dimensional subspaces in a vector space of dimension $m$ with 
a \underline{given} basis $(e_1, ..., e_m)$, in generic position to the coordinate hyperplanes. 

There is
a canonical isomorphism
$$
{\bf G}_m(q) = {\rm Conf}^*_{m}(q).
$$ 
It assigns to a generic $q$-plane $\pi$ 
 a configuration of vectors in the dual space 
$\pi^*$ given by the restrictions of the linear coordinate functionals $x_i$ 
dual to the basis.  

Using this, we organise the spaces ${\rm Conf}^*_m(q)$ into a single object,  
  the {\it Bigrassmannian}:

\be \label{bigrass}
\begin{array}{ccccccc}
&&&&\ldots &\stackrel{\lra}{\stackrel{\ldots}{\lra}} &{\bf G}_5(4)\\
&&&&\downarrow... \downarrow&&\downarrow... \downarrow \\
&&\ldots &\stackrel{\lra}{\stackrel{\ldots}{\lra}} &{\bf G}_5(3)&\stackrel{\lra}{\stackrel{\ldots}{\lra}} &{\bf G}_4(3)\\
&&\downarrow... \downarrow &&\downarrow... \downarrow &&\downarrow ... \downarrow\\
\ldots &\stackrel{\lra}{\stackrel{\ldots}{\lra}} &{\bf G}_5(2) &\stackrel{\lra}{\stackrel{\ldots}{\lra}}
 &{\bf G}_4(2) &\stackrel{\lra}{\stackrel{\ldots}{\lra}} &{\bf G}_3(2) \\
\downarrow... \downarrow &&\downarrow... \downarrow &&\downarrow ... \downarrow&&\downarrow ... \downarrow\\
{\bf G}_5(1)&\stackrel{\lra}{\stackrel{\ldots}{\lra}} &{\bf G}_4(1)&\stackrel{\lra}{\stackrel{\ldots}{\lra}} &{\bf G}_3(1)& \stackrel{\lra}{\stackrel{\ldots}{\lra}} & {\bf G}_2(1)
\end{array} 
\ee

Applying the functor  $X \to \Z[X(F)]$ to the 
Bigrassmannian we get the {\it Grassmannian bicomplex}:
$$
\begin{array}{ccccccc}
&&&&\ldots &\stackrel{p}{\lra} &C_5(4)\\
&&&&\downarrow p&&\downarrow p\\
&&\ldots &\stackrel{f}{\lra} &C_5(3)&\stackrel{f}{\lra} 
&C_4(3)\\
&&\downarrow p&& \downarrow p&& \downarrow p\\
\ldots &\stackrel{f}{\lra} &C_5(2) &\stackrel{f}{\lra}
 &C_4(2) &\stackrel{f}{\lra} &C_3(2) \\
 \downarrow p&& \downarrow p&& \downarrow p&& \downarrow p\\
C_5(1)&\stackrel{f}{\lra} &C_4(1)&\stackrel{f}{\lra} &C_3(1)& \stackrel{f}{\lra} & C_2(1)
\end{array} 
$$
Here the maps $f$ and $p$ are the alternating sums of the maps $f_j$, and  $p_i$:
$$
f = \sum_{s=0}^m(-1)^sf_s, ~~~~ p  = \sum_{s=0}^m(-1)^sp_s.
$$

Denote by $BC_\ast$ the sum of the groups on the diagonals:
$$
BC_{m}:= \bigoplus_{q=1}^{m-1} C_{m}(q).
$$ 
Changing the signs of the differentials in the bicomplex, we get the {\it Bigrassmannian complex}
$$
\ldots \lra BC_5\lra BC_4 \lra BC_3\lra BC_2.
$$

\paragraph{Decorated flags.} 

\bd A {\it decorated flag} $F_\bullet$ in an $N$-dimensional vector 
space is  a collection of subspaces

\be \label{flag}   F_0 \subset F_1 \subset F_2 \subset\cdots 
\subset F_N\,, ~~~{\rm dim}F_i=i,
\ee   
together with a choice of a non-zero vector $f_i \in F_{i}/F_{i-1}$ for each $i=1, ..., N$. 
 \ed
 
A collection of $m+1$ decorated flags $(F_{0, \bullet}, F_{1, \bullet}, ..., F_{m, \bullet})$ in 
an $N$-dimensional vector 
space $V_N$ is  {\it generic}, if 
for any integers $a_0, ..., a_m$ which sum to $N$ one has an isomorphism
$$
F_{0, a_0} \oplus ... \oplus F_{m, a_m} = V_N. 
$$

Denote by ${\cal A}_N$ the variety of all decorated flags in $V_N$, and by 
${\rm Conf}^*_{m}({\cal A}_N)$ the variety of generic configurations  of $m$ decorated flags. 
It is defined over ${\rm Spec}(\Z)$. 
So for any field $F$ there is  the complex of generic configurations of decorated flags
$$
\ldots \lra C_m({\cal A}_N) \lra \ldots \lra C_2({\cal A}_N)  \lra C_1({\cal A}_N).
$$

\paragraph{From configurations of decorated flags to configurations of vectors.} 
We start with a collection of $m+1$ generic decorated flags in an $N$-dimensional vector space $V_N$: 
\be \label{flags}
(F_{0, \bullet}, ..., F_{m, \bullet}).
\ee

Given a partition
\be \label{pcf}
{\bf a} = \{a_0, \ldots , a_m\}, \quad a_0 + \ldots + a_m=N-(q+1), \quad  a_i\geq 0,
\ee 
consider a codimension $q+1$  linear subspace 
of $V_N$ given by the sum of the flag subspaces $F_{i, a_i}$: 
\be 
F_{0, a_0}\oplus F_{1, a_1}\oplus \ldots \oplus F_{m, a_m} \subset V_N.
\ee
Take the quotient  by this subspace
\be Q_{\bf a} = \frac{V_N }{ F_{0, a_0}\oplus F_{1, a_1}\oplus \ldots \oplus F_{m, a_m}}.
\ee

We use the decorations to produce a configuration of $(m+1)$ vectors in the quotient $Q_{\bf a}$. 
Namely, the ``next'' decoration vector $f_{a_i +1} \in F_{i, a_i+1}/ F_{i, a_i}$ in the decorated flag $F_i$ 
provides a vector in the quotient, denoted by $l_i$.
The vectors $\{l_0, ..., l_m\}$ in the space $ Q_{\bf a}$ provide a configuration 
$(l_0, \ldots , l_m)$. 
So we put
$$
\pi_{\bf a}(F_{0, \bullet}\ldots, F_{m, \bullet}):= (l_0, \ldots, l_m)\in {\rm Conf}^*_{m+1}(q+1).
$$

So a partition ${\bf a}$ gives rise to a  projection
$$
\pi_{\bf a}: {\rm Conf}^*_{m+1}({\cal A}_N) \lra {\rm Conf}^*_{m+1}({q+1}). 
$$

\paragraph{The main construction \cite[Section 2]{G93}.}

\begin{itemize}

\item 
{\it Given a configuration $(F_{0, \bullet}, F_{1, \bullet}, ..., F_{m, \bullet})$ of decorated flags in $V_N$, we 
assign to {\it every} partition 
${\bf a}$ as in (\ref{pcf}) the configuration  
of vectors $\pi_{\bf a}(F_{0, \bullet}, F_{1, \bullet}, ..., F_{m, \bullet})$ in a $(q+1)$-dimensional vector space, 
and take the} {\bf sum over all $q$ and all partitions ${\bf a}$}:
$$
c_m: (F_{0, \bullet}, F_{1, \bullet}, ..., F_{m, \bullet}) \lra \sum_{{\bf a}}\pi_{\bf a}(F_{0, \bullet}, F_{1, \bullet}, ..., F_{m, \bullet})\in BC_{m+1}.
$$
{\it We extend the map to a homomorphism of abelian groups} 
$$
c_m: C_{m+1}({\cal A}_N)\lra BC_{m+1}. 
$$
\end{itemize}

The following crucial result was proved in Lemma 2.1 from \cite{G93}. 
\begin{theorem} \la{MTSG}
The collection of maps $c_n$ gives rise to a homomorphism of complexes
\be \label{dfltobigr}
\begin{array}{cccccccc}
\lra &C_5({\cal A}_N)&\lra &C_4({\cal A}_N)& \lra &C_3({\cal A}_N)&\lra &C_2({\cal A}_N)\\
&\downarrow c_4&&\downarrow c_3 &&\downarrow c_2&&\downarrow c_1\\
\lra &BC_5&\lra &BC_4&\lra &BC_3&\lra &BC_2
\end{array}
\ee
\end{theorem}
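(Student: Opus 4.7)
The plan is to check that each square in (\ref{dfltobigr}) commutes by a telescoping argument, with one clean geometric identity at its heart. Fix a generic configuration $(F_{0,\bullet}, \ldots, F_{m,\bullet})$ of decorated flags in $V_N$ and an index $i \in \{0, \ldots, m\}$. Writing $\mathbf{a}+e_i$ for the partition obtained from $\mathbf{a}=(a_0,\ldots,a_m)$ by replacing $a_i$ with $a_i+1$, the key identity to establish is
\[
p_i\bigl(\pi_{\mathbf{a}}(F_{0,\bullet},\ldots,F_{m,\bullet})\bigr) \;=\; f_i\bigl(\pi_{\mathbf{a}+e_i}(F_{0,\bullet},\ldots,F_{m,\bullet})\bigr),
\]
valid whenever both sides are defined. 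This is immediate from the construction: $\pi_{\mathbf{a}}$ lives in the quotient $V_N / \bigoplus_j F_{j,a_j}$ with $l_i$ the image of the decoration vector $f_{i,a_i+1}$, so projecting along $l_i$ enlarges the summand $F_{i,a_i}$ to $F_{i,a_i+1}$, producing exactly the quotient used for $\pi_{\mathbf{a}+e_i}$, while the surviving vectors $l_j$ for $j\neq i$ coincide on both sides.

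Two boundary observations complete the geometric input. When $a_i=0$, one has $F_{i,0}=0$, so $V_N/\bigoplus_j F_{j,a_j}=V_N/\bigoplus_{j\neq i} F_{j,a_j}$ and a direct check yields
\[
f_i\bigl(\pi_{\mathbf{a}}(F_{0,\bullet},\ldots,F_{m,\bullet})\bigr) \;=\; \pi_{\mathbf{b}}(F_{0,\bullet},\ldots,\widehat{F_{i,\bullet}},\ldots,F_{m,\bullet}),
\]
where $\mathbf{b}=(a_j)_{j\neq i}$ --- precisely the kind of term produced by $c_{m-1}\circ d$. At the opposite extreme, when $a_i$ is large enough that $\pi_{\mathbf{a}+e_i}$ would land in quotient dimension zero, $p_i\pi_{\mathbf{a}}$ falls outside the $q\geq 1$ range of $BC_\bullet$ and is set to zero, so the telescoping closes cleanly at its upper end.

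Assembling: with the total differential on $BC_\bullet$ built from appropriately signed combinations of $f=\sum_s(-1)^s f_s$ and $p=\sum_s(-1)^s p_s$, for each fixed $i$ and each fixed $\mathbf{b}=(a_j)_{j\neq i}$ the sum $\sum_{a_i}\bigl(f_i\pi_{\mathbf{a}} \pm p_i\pi_{\mathbf{a}}\bigr)$ telescopes via the core identity to the single surviving term at $a_i=0$, which by the boundary observation equals $\pi_{\mathbf{b}}(F_{0,\bullet},\ldots,\widehat{F_{i,\bullet}},\ldots,F_{m,\bullet})$. Multiplying by $(-1)^i$ and summing over $i$ and over $\mathbf{b}$ reproduces $c_{m-1}\bigl(d(F_{0,\bullet},\ldots,F_{m,\bullet})\bigr)$ exactly, which is the desired commutativity.

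The main obstacle is sign bookkeeping: choosing the signs of the horizontal and vertical differentials of the Grassmannian bicomplex (and thus of the total differential $\partial$ on $BC_\bullet$) consistently so that the telescoping signs match those of $d=\sum_i(-1)^i f_i$ acting on configurations of decorated flags, and correctly handling partitions $\mathbf{a}$ near the boundary of the admissible range for the dimension $q+1$. Once those signs are aligned and the tautological identity $p_i\pi_{\mathbf{a}}=f_i\pi_{\mathbf{a}+e_i}$ is in place, the proof reduces to a single telescoping computation.
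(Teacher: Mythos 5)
Your proof is correct, and it is essentially the algebraic form of the argument given in Section 4.2 of the paper. There the map is read geometrically via the hypersimplicial $N$-decomposition of the simplex $\Delta^m_{(N)}$: the term $\pi_{\bf a}$ is labeled by a hypersimplex $\Delta^{p,q}_{\bf a}$, the face maps $f_i$ and $p_i$ correspond to its two kinds of codimension-one faces, and commutativity of (\ref{dfltobigr}) becomes the observation that summing the boundaries of all the hypersimplices in the decomposition yields the boundary of $\Delta^m_{(N)}$, since shared interior faces cancel in pairs. Your core identity $p_i\pi_{\bf a} = f_i\pi_{{\bf a}+e_i}$ is precisely the algebraic content of this interior-face matching, and your telescoping over $a_i$ for fixed $(a_j)_{j\neq i}$ is its direct computational realization; the two boundary observations you record (the surviving term at $a_i=0$ producing $c_{m-1}\circ d$, and the chain terminating when the quotient dimension drops below one) correspond to the faces of the outer simplex and the bottom edge of the Bigrassmannian, respectively. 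The paper's hypersimplicial framing makes the cancellation structure visually evident and ties the construction to the combinatorics and toric geometry of Grassmannians; your version is self-contained and avoids the geometric language, at the cost of aligning the sign conventions on the total differential of $BC_\bullet$ with $d$ on decorated-flag configurations by hand. Both presentations leave that bookkeeping largely implicit --- the paper delegates the full verification to Lemma 2.1 of \cite{G93} --- so the obstacle you flag is a genuine loose end, but one the paper treats the same way.
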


Our next goal is to give an interpretation of this map via hypersimiplicial decompositions. 

\subsection{Hypersimplicial decompositions of simplices and a  proof of Theorem \ref{MTSG}} 

\paragraph{Hypersimplices \cite{GGL}.} Let $p,q \geq 0$ be a pair of non-negative integers.  
Set $p+q=m-1$. A hypersimplex $\Delta^{p,q}$ is a hyperplane section of 
the $(m+1)$-dimensional unit cube:
$$
\Delta^{p,q}:= \{(x_0, ..., x_m) \in [0,1]^{m+1} ~|~ \sum_{i=0}^m  x_i = q+1\}, ~~~~p+q=m-1.
$$ 

The hypersimplex $\Delta^{p,q}$ is a  convex polyhedron 
isomorphic to the convex hull of the centers of $q$-dimensional faces 
of an $m$-dimensional simplex. 
\vskip 2mm

The hypersimplices $\Delta^{p, 0}$ and $\Delta^{0, q}$ are just simplices. 
The hypersimplex $\Delta^{1,1}$ 
is the octahedron. 
It is the convex hull of the centers of the edges of a tetrahedron. 
\vskip 2mm

The boundary of a hypersimplex $\Delta^{p,q}$ is a union of $m+1$ hypersimplices 
$\Delta^{p-1,q}$ and $m+1$ hypersimplices $\Delta^{p,q-1}$. 
They are given by the intersections with the hyperplanes $x_i=1$ and $x_i=0$ of the unit cube. 
For example, the boundary of the octahedron 
$\Delta^{1,1}$ consists of four $\Delta^{1, 0}$-triangles  and four $\Delta^{0, 1}$-triangles.

\paragraph{Hypersimplicial $N$-decomposition of a simplex \cite[Section 10.4]{FG1}.} 
It is a canonical decomposition of an $m$-dimensional simplex 
into hypersimplices which depend on an additional natural number $N$. 

Consider the standard coordinate space $\R^{m+1}$. It contains the integral lattice $\Z^{m+1}$. 
The integral 
hyperplanes  $x_i =s$, $s\in \Z$, cut the space into unit cubes with vertices at  integral points. 
Take an $m$-dimensional 
simplex  given by the intersection of the 
hyperplane $\sum x_i =N$ with the positive octant:
$$
\Delta_{(N)}^m = \{(x_0, ..., x_m) ~|~ x_i\geq 0, ~\sum_{i=0}^m x_i =N\}.
$$

The integral hyperplanes $x_i =s$ cut this simplex 
into a union of hypersimplices. 
Indeed, the hyperplane 
$\sum x_i =N$ intersects each of the standard unit  lattice cubes either by an empty set, or by a 
hypersimplex. We call it 
{\it a hypersimplicial $N$-decomposition of an $m$-dimensional simplex}. 
A hypersimplicial $N$-decomposition of a simplex induces a hypersimplicial $N$-decomposition of 
each face of the simplex.

\bl The $\Delta^{p,q}$-hypersimplices 
 of the hypersimplicial $N$-decomposition of an $m$-simplex match 
partitians ${\bf a} = (a_0, ..., a_m)$, where 
\be \label{a-equation}
a_0 + \ldots + a_m=N-(q+1), ~~~ a_i \in \Z_{\geq 0}.
\ee
\el

\begin{proof} The standard hypersimplex $\Delta^{p,q}$ consists of the points of the unit cube 
with coordinates 
$(x_0, ..., x_m)$ satisfying $x_0+ ... + x_m = q+1$. So any partition ${\bf a}$ provides a hypersimplex 
$$
(a_0, ..., a_m) + (x_0, ..., x_m) \subset \Delta^m_{(N)}. 
$$
So we parametrise hypersimplices in $\Delta_{(N)}^m$  by the coordinates   
$(a_0, ..,., a_m)$ of their ``lowest'' vertices. 
\end{proof}
\noindent
Let $\Delta_{\bf a}^{p,q}$ be the hypersimplex of a hypersimplicial $N$-decomposition 
assigned to a partition ${\bf a}$.

\paragraph{Examples.} 1. The $N$-decomposition of a segment $\Delta^1$ is a decomposition into $N$ little 
$\Delta^{0,0}$-segments. They match partitians $a_0+a_1 = N-1$. 

2. The $N$-decomposition of a triangle $\Delta^2$ is a decomposition into 
triangles of two types, $\Delta^{1, 0}$ and $\Delta^{0,1}$. The $\Delta^{1,0}$-triangles 
 match partitians $a_0+a_1+a_2 = N-1$. The $\Delta^{0,1}$-triangles 
match partitians $a_0+a_1+a_2 = N-2$. 

3. The $N$-decomposition of a tetrahedron $\Delta^3$ has tetrahedrons of two types and octahedron. 
The $\Delta^{2, 0}$-tetrahedrons 
 match partitians $a_0+a_1+a_2+a_3 = N-1$. The $\Delta^{1,1}$-octahedrons 
match partitians $a_0+a_1+a_2+a_3 = N-2$. The $\Delta^{0,2}$-tetrahedrons 
 match partitians $a_0+a_1+a_2+a_3 = N-3$. 

\vskip 3mm

Recall that a hypersimplex $\Delta^{p,q}$ has $2(m+1)$ codimension one faces:  
$(m+1)$ of them are hypersimplices of type $\Delta^{p-1,q}$, and the other $(m+1)$ are
hypersimplices of type  $\Delta^{p, q-1}$. 

Each hypersimplex $\Delta^{p,q}_{\bf a}$ 
is surrounded by $m+1$ hypersimplices $\Delta^{p+1,q-1}_{\bf b}$, 
sharing with it a codimension one face of type $\Delta^{p, q-1}$. 
The ${\bf b}$'s are obtained from ${\bf a}$ by adding $1$ to one of the coordinates 
$(a_0, ..., a_m)$. So the collection of ${\bf b}$'s is
$$
(a_0+1, a_1, a_2, ..., a_m), ~~(a_0, a_1+1, a_2, ..., a_m), ~~\ldots ~~, 
(a_0, a_1, a_2, ..., a_m+1).
$$

Each hypersimplex $\Delta^{p,q}_{\bf a}$ is also surrounded by $m+1$ hypersimplices $\Delta^{p-1,q+1}_{\bf c}$, 
sharing with it a codimension one face of type $\Delta^{p-1,q}$.  
The ${\bf c}$'s are 
obtained from ${\bf a}$ by subtracting $1$ from one of the coordinates 
$(a_0, ..., a_m)$. So the collection of ${\bf c}$'s is 
$$
(a_0-1, a_1, a_2, ..., a_m), ~~(a_0, a_1-1, a_2, ..., a_m), ~~\ldots ~~, 
(a_0, a_1, a_2, ..., a_m-1).
$$

\vskip 3mm
The combinatorics of hypersimplices is related  \cite{GM82} to the geometry of the 
Grassmannians. 
$$\mbox{\it 
The Grassmannian ${\bf G}_{p+q+2}(q+1)$ matches the hypersimplex $\Delta^{p,q}$.}
$$ 
Precisely, consider the action of the coordinate torus ${\rm T}_{p+q+2} = {\Bbb G}^{p+q+2}_m$ 
on the Grassmannian ${\bf G}_{p+q+2}(q+1)$. Then the closure of each of the generic ${\rm T}_{p+q+2}$-orbits 
is a $(p+q+1)$-dimensional toric variety, and combinatorics of its boundary strata coincides with the 
structure of the hypersimplex $\Delta^{p,q}$. Alternatively, it follows from the general 
Convexity Theorem of Atiyah \cite{A82} that the image of ${\bf G}_{p+q+2}(q+1)$ under the moment map 
assigned to the torus action 
is the hypersimplex $\Delta^{p,q}$. 

\paragraph{A  proof of Theorem \ref{MTSG}.} 
Our key construction provides a map 
\be \label{hdflbc}
\mbox{\it Complex of decorated flags} ~\lra ~ 
\mbox{\it Bigrassmannian 
complex}.
\ee
To see that it commutes with differentials, we rephrase it as a correspondence from the 
variety ${\rm Conf}_{m+1}^*({\cal A}_N)$ to the Bigrassmannian: 

\begin{itemize}

\item 
{\it Given a generic configuration of $(m+1)$ decorated flags
 $(F_{0, \bullet}, F_{1, \bullet}, ..., F_{m, \bullet})$  in $V_N$, we define a collection of points in the 
Grassmannians ${\bf G}_{m+1}(\ast)$. 
These points are parametrised by the hypersimplices 
of the hypersimplicial $N$-decomposition of an $m$-dimensional simplex: 

Each hypersimplex $\Delta^{p,q}_{\bf a} \subset \Delta^m_{(N)}$ 
gives rise to a point of the Grassmannian ${\bf G}_{m+1}(q+1)$:}
\be \la{4.19.15.1}
\pi_{\bf a}(F_{0, \bullet}, F_{1, \bullet}, ..., F_{m, \bullet}) \in {\bf G}_{m+1}(q+1).
\ee
\end{itemize}

 Furthermore, the $2(m+1)$ elements provided by the boundary of the element (\ref{4.19.15.1}) 
match the ones assigned to the boundaries of the hypersimplex $\Delta^{p,q}_{\bf a} \subset \Delta^m_{(N)}$. 
The sum of the boundaries of all these hypersimplices is, of course, the boundary 
of the simplex $\Delta^m_{(N)}$ presented as a sum of its own hypersimplices. 
This just means that we get a homomorphism of complexes.

\vskip 3mm
We defined   homomorphisms of complexes (\ref{dfltobigr}):
\be \label{hdflblc1}
 \mbox{\it Complexes of generic decorated flags in $V_N$}~\lra ~ \mbox{\it the Bigrassmannian 
complex}. 
\ee 

We will review in Section \ref{sec3.2.4n}  homomorphisms of complexes,
\be \label{hdflblc2}
\mbox{\it the Bigrassmannian 
complex} ~\lra ~ 
\mbox{\it weight $n$ motivic complex}, ~~~~n\leq 4.
\ee

Finally, we defined in Section \ref{sec4m} for $n \leq 4$ maps 
\be \label{hdflblc3}
\mbox{\it Weight $n$ polylogarithmic  
complex} ~\lra ~
\mbox{\it weight $n$ {\rm Lie}-exponential complex}, ~~n\leq 4. 
\ee 

Combining these three maps, we get explicit cocycles for the Chern classes 
with values in the Deligne cohomology for $n\leq 3$. The  
$n=4$ case needs a  more general map (\ref{hdflblc3}), since 
the weight four motivic complex in (\ref{hdflblc2}) 
is no longer the polylogarithmic complex, it rather, see \cite{G00}:
\be \la{100}
G_4(F) \lra {\cal B}_3(F) \otimes F^* \lra {\cal B}_2(F) \otimes \Lambda^2 F^* \lra \Lambda^4 F^*.
\ee
However, using the big period map on the  ${\cal H}_4$, one can  
extend  (\ref{hdflblc3}) to this case.

\subsection{Maps Bigrassmannian complex $~\to~$ motivic complexes} \label{sec3.2.4n}

\paragraph{1. Bigrassmannian complex $~\lra~$ Bloch complex.}
We construct a  map of complexes 
\begin{equation} \label{bctobloch2}
\begin{array}{ccccccc}
{BC}_5&
 \lra &{BC}_4&\lra & {BC}_3 & \lra & {BC}_2\\
\downarrow  &&\downarrow &&\downarrow &&\downarrow \\
0 & \lra &{\cal B}_2(F)& \stackrel{}{\lra} &\Lambda^2F^*&\lra &0
\end{array} 
\ee
It is defined at the Grassmannian bicomplex, raw by raw. 
The bottom raw goes to zero. 
The map on the second raw amounts to the following
 map of complexes, defined  
in (\ref{BCMAP}), Section 1:
\be \label{rs2}
\begin{array}{ccccc}
{C}_5(2) &\lra
 &{C}_4(2) &\lra &{C}_3(2) \\
\downarrow  &&\downarrow l_1 &&\downarrow l_2\\
0 &\lra&{\cal B}_2(F) & \stackrel{}{\lra} &\Lambda^2F^*
\end{array} 
\ee

Combining the homomorphism (\ref{dfltobigr})= (\ref{hdflblc1}) 
 with the homomorphism  (\ref{bctobloch2}), we arrive at 
  a homomorphism from the complex of decorated flags in $V_N$ to the Bloch complex: 

\be \label{dftobc1}
\begin{array}{ccccccc}
\ldots &\lra &C _5({\cal A}_N) &\lra
 &C _4({\cal A}_N) &\lra &C _3({\cal A}_N) \\
&&\downarrow 0 &&\downarrow  &&\downarrow \\
\ldots &\lra &0& \lra &{\cal B}_2(F) & \stackrel{}{\lra} &\Lambda^2F^*
\end{array} 
\ee
It is the main ingredient 
of the cocycle for the second motivic Chern class in \cite{G93}:
\be \label{mschc}
C_2^{\cal M} \in H^4(BGL_N, \Z_{\cal M}(2)).
\ee

\paragraph{2. Bigrassmannian complex $~\to~$ weight three motivic complex} \label{sec3.2.4n}

Let us construct a  map of complexes
\begin{equation} \label{bctobloch}
\begin{array}{ccccccccc}
{BC}_7 &\lra &{BC}_6 &\lra
 &{BC}_5 &\lra &{BC}_4&\lra &\ldots \\
\downarrow &&\downarrow  &&\downarrow  &&\downarrow &&\downarrow \\
0 &\lra &{\cal B}_3(F)& \lra &{\cal B}_2(F)\otimes F^*& \stackrel{}{\lra} &\Lambda^3F^*& \stackrel{}{\lra} &0
\end{array} 
\ee

We define it by looking at the Grassmannian bicomplex, and defining the map raw by raw. 

We send the bottom two raws to zero. 
The map on the third raw amounts to a construction of the following map of complexes:
\be \label{rs}
\begin{array}{ccccccc}
{C}_7(3) &\lra &{C}_6(3) &\lra
 &{C}_5(3) &\lra &{C}_4(3) \\
\downarrow &&\downarrow  &&\downarrow  &&\downarrow \\
0 &\lra &{\cal B}_3(F)& \lra &{\cal B}_2(F) \otimes F^*& \stackrel{}{\lra} &\Lambda^3F^*
\end{array} \ee
This has been done 
in Section 3.2 in \cite{G95a}), see some additions in Section 5 in \cite{G95b}.

Combining  homomorphism (\ref{dfltobigr}) 
 with the homomorphism  (\ref{bctobloch}) from the Bigrassmannian complex to the 
weight three motivic complex, we arrive at 
  a homomorphism of complexes

\be \label{dftobc1}
\begin{array}{ccccccc}
\ldots &\lra &C_6({\cal A}_N) &\lra
 &C_5({\cal A}_N) &\lra &C_4({\cal A}_N)  \\
&&\downarrow  &&\downarrow  &&\downarrow \\
\ldots &\lra &{\cal B}_3(F) & \lra &{\cal B}_2(F) \otimes F^*& \stackrel{}{\lra} &\Lambda^3F^*
\end{array} 
\ee
It is the main ingredient 
of the  cocycle for the third motivic Chern class in \cite{G93}: 
\be \label{mschc3}
C_3^{\cal M} \in H^6(BGL_N, \Z_{\cal M}(3)).
\ee
\paragraph{Bigrassmannian complex $~\lra~$ weight four motivic complex.} 
We will treat it in a different place, since it requires an elaborate exposition.

\paragraph{Remark.} Motivic Chern classes 
$C_n^{\cal M} \in H^{2n}(BGL_N, \Z_{\cal M}(n))$ are defined for $n \leq 4$ 
on Milnor's simplicial model of the classifying space $BGL_N$, and take values in 
the motivic complexes there. We construct  
cocycles representing these classes at the generic point of $BGL_N$.
It is a key property of the construction that these cocycles extend to  cocycles 
on the whole space $BGL_N$ with the values in the motivic complex defined 
using the Gersten resolution, see details in \cite{G93} for the weights 2 and 3, 
and even more details in Section 4 of \cite{G95a} for the weight 3.

Contrary to this,  our  construction of cocycles representing 
the  Deligne cohomology classes 
$$
C_n^{\cal D} \in H^{2n}(BGL^*_N, \Z_{\cal D}(n))
$$
works at the generic point only. This is sufficient 
for the goal, since $BGL^*_N$ is a model of the classifying space for the $GL_N$. 
 And this is sufficient to get explicit formulas for the Chern classes of vector bundles. 
Yet it is desired to extend the construction to $BGL_N$.

\section{Appendix: a map to the real Deligne complex}  

\paragraph{An outline.} 
Let $(S^{\bullet}, d)$ be the de Rham complex of {\it smooth} real valued forms on a manifold $X$. 
Recall that we constructed a map of complexes
$$
\omega_n^{(\bullet)}: \Q_{\cal E}^\bullet(n) ~~\lra~~ \Omega^\bullet.  
$$
Consider the canonical projection:
$$
\pi_n: \C \lra \C/\R(n) = \R(n-1); ~~~~\pi_n(a+ib):= \begin{cases} 
a & n~ \mbox{odd},\\
 ib & n ~\mbox{even}.
\end{cases}
$$
The map $\pi_n$ induces a projection 
of the de Rham complex of complex valued smooth forms to the de Rham complex of 
$\R(n-1)$-valued forms: 
$$
\pi_n: \Omega^\bullet ~~\lra ~~ S^\bullet(n-1). 
$$
So we get a canonical map from the exponential complex:
\be \la{MAPPHI}
\varphi_n^{(\bullet)}:= \pi_n\circ \omega_n^{(\bullet)}:   \Q_{\cal E}^\bullet(n) \lra S^\bullet(n-1).  
\ee
We will show that the map $\varphi_n^{(\bullet)}$ is canonically homotopic to zero 
by  constructing 
a homotopy 
\be \la{MAPS}
s_n^{(\bullet)}:   \Q_{\cal E}^\bullet(n) \lra S^\bullet(n-1)[-1], ~~~~d\circ s_n^{(\bullet)} +s_n^{(\bullet)} \circ d =  
\varphi_n^{(\bullet)}.
\ee

Let us assume that we have a map, conjectured in 
Conjecture \ref{periods1small}, from the weight $n$ part ${\cal L}^\bullet(n)$ of 
cochain complex of the 
$\Q$-Hodge-Tate Lie coalgebra ${\cal L}$ to the {\rm Lie}-exponential complex: 
$$
p^{(\bullet)}_n: {\cal L}^\bullet(n) \stackrel{}{\lra}   \Q_{\cal E}^\bullet(n). 
$$
Recall an important feature of the map (\ref{MAPPHI}):
$$
\mbox{the composition}~~ \varphi_n^{(\bullet)}\circ p^{(\bullet)}_n: {\cal L}^\bullet(n) \stackrel{}{\lra}   \Q_{\cal E}^\bullet(n) 
\stackrel{}{\lra} S^\bullet(n-1)~~\mbox{is zero}.
$$
Therefore the composition $s_n^{(\bullet)}\circ p^{(\bullet)}_n$ is  a map of complexes:
\be \la{10.17.2015.1}
s_n^{(\bullet)}\circ p^{(\bullet)}_n: {\cal L}^\bullet(n) \stackrel{}{\lra}   \Q_{\cal E}^\bullet(n)
\stackrel{}{\lra} S^\bullet(n-1)[-1].
\ee

Recall that the weight $n$ real Deligne complex is given by the cone
$$
  \R_{\cal D}(n) = ~~{\rm Cone}\Bigl(\pi_n: F^n\Omega^\bullet \lra S^\bullet(n-1)[-1]\Bigr).
$$
\bl
The map (\ref{10.17.2015.1}) gives rise to a morphism to the weight $n$ real Deligne complex:
\be \la{10.17.2015.2}
(s_n^{(\bullet)}\circ p^{(\bullet)}_n, \omega_n^{(n)}\circ p_n^{(n)}): {\cal L}^\bullet(n) 
\stackrel{}{\lra}   \R_{\cal D}(n).
\ee
\el
\begin{proof}
The map (\ref{10.17.2015.1}) gives the component of the map (\ref{10.17.2015.2}) 
 in $(S^0 \to ... \to S^{n-1})(n-1)[-1]$. The only other non-trivial component 
is the standard map $\omega_n^{(n)}\circ p_n^{(n)}: \Lambda^n{\cal L}_1 \lra \Omega^n$. 
\end{proof}

In particular, combining this with a regulator map from the motivic complex to 
${\cal L}^\bullet(n)$ we would get a homomorphism from the motivic complex to the weight 
$n$ real Deligne complex.

\paragraph{The morphism $\varphi_n^{(\bullet)}$.} 
It is  a morphism of complexes which looks as follows:
\be \la{HOMD}
\begin{array}{ccccccccccc}
{\cal O}(n-1) &\to&\Lambda^2 {\cal O}(n-2) & \stackrel{\delta}{\to}&...
&\stackrel{\delta}{\to}&\Lambda^n  {\cal O}&\stackrel{\rm exp}{\to}&\Lambda^n  {\cal O}^*\\
&&&&&&&& \\
\downarrow \varphi_n^{(0)}&&\downarrow \varphi_n^{(1)}& ...&...&...
&\downarrow \varphi_n^{(n-1)}&&\downarrow \varphi_n^{(n)}\\
&&&&&& &\\
S^0(n-1)&\to &S^1(n-1)&   \stackrel{d}{\to}& ... &
\stackrel{d}{\lra}&S^{n-1}(n-1)&\stackrel{d}{\to}&S^{n}(n-1) 
\end{array}
\ee

Namely, 
$$
\varphi_n^{(n)}: \Lambda^{n} {\cal O}^*\lms S^{n}(n-1),~~~~F_1\wedge ... \wedge F_n \lms \pi_n(d\log F_1 
\wedge ... \wedge d\log F_n),
$$
and for $k=1, ..., n$, 
$$
\varphi_n^{(k-1)}: \Lambda^{k } {\cal O}(n-k )\lms S^{k-1}(n-1), 
$$
\be \la{FWKMOF}
\begin{split}
&(2 \pi i)^{n-k } \cdot f_1 \wedge ... \wedge  f_{k } \lms 
  \pi_{n} \circ 
d^{-1} \Bigl((2 \pi i)^{n-k } \cdot df_1 \wedge ... \wedge df_{k }\Bigr) := \\
&(k-1)!~\pi_{n}\Bigl( (2 \pi i)^{n-k }\cdot \sum_{i=1}^{k } (-1)^i f_i ~
df_1 \wedge ...  \wedge {\widehat {df_i}} \wedge ... \wedge df_{k }\Bigr).
\end{split}
\ee

\paragraph{A homotopy $s_n^{(\bullet)}$.} 
For example for $n=2$ we are going to get a diagram of maps
 \be
\begin{array}{ccccccc}
&&{\cal O}(1)&\stackrel{\delta}{\lra}&\Lambda^2 {\cal O}& 
\stackrel{\rm exp}{\lra}&\Lambda^2 {\cal O}^*\\
&&&&&&\\
&\swarrow s_2^{(0)}&\downarrow \varphi_2^{(0)}&\swarrow s_2^{(1)}&\downarrow \varphi_2^{(1)}&\swarrow s_2^{(2)} &\downarrow \varphi_2^{(2)}\\
&&&&&&\\
0& \lra&S^0(1) & \stackrel{d}{\lra} &S^1(1) & \stackrel{d}{\lra} &S^2(1)
\end{array}
\ee

Let   ${\rm
Alt}_n F(x_1,...,x_n):= \sum_{\sigma \in S_n}(-1)^{|\sigma|}F(x_{\sigma
(1)},...,x_{\sigma (n)})$, and ${\rm Im}(x+iy) := iy$. Let us set
$$
\widetilde r_{n-1}: \Lambda^n  {\cal O}  \lra S^{n-1}(n-1), \quad f_1 \wedge ... \wedge f_n \lms  
d^{-1} \circ \pi_n(d  f_1  \wedge ... \wedge  d f_n):=
$$
$$
 {\rm Alt}_n \sum_{j\geq 0} c_{j,n}{\rm Re}f_1 ~d{\rm Re}f_2 
\wedge ... \wedge d{\rm Re}f_{2j+1} \wedge d{\rm Im} f_{2j+2}\wedge ... \wedge
d{\rm Im} f_{n}.
$$
Here 
$c_{j,n}:= \frac{1}{(2j+1)!(n-2j-1)!}$. 
For example, 
$$
\widetilde r_{1}: \Lambda^2  {\cal O}  \lra S^{1}(1), \quad f_1 \wedge f_2 \lms  
d^{-1} \circ \pi_2(d  f_1  \wedge  d f_2):= {\rm Re}f_1 ~d{\rm Im} f_{2} -{\rm Re}f_2 ~d{\rm Im} f_{1}.  
$$
A primitive $d^{-1} \circ \pi_n(d  f_1  \wedge ... \wedge  d f_n)$ is not uniquely defined. 
Our choise has a  property that 
\be \la{CHOISE}
r_{k-1}( 2 \pi i  \wedge  f_2 \wedge ... \wedge f_k) = 0. 
\ee

Set $s_n^{(n)}:= \frac{1}{n!} r_{n-1}$.  Let us define   maps 
$$
s_n^{(k )}: \Lambda^{k+1 } {\cal O}(n-k -1) \lra S^{k-1}(n-1)\qquad  1 \leq k \leq n-1
$$ 
by setting   
$$
 s_n^{(k )}:  (2 \pi i)^{n-k-1 } \otimes f_0 \wedge ... \wedge f_k ~\lms~
\frac{ (2 \pi i)^{n-k-1}}{n!} {\rm Alt}_{k+1}\Bigl({\rm Im}f_0 \cdot \widetilde r_{k-1}(f_1\wedge ... \wedge f_k)\Bigr).
$$

\begin{theorem} The map $s_n^{(\bullet)}$ is a homotopy 
between the map $\varphi_n^{(\bullet)}$ and zero:
$$
s_n^{(k+1)}\circ \delta  + d \circ s_n^{(k)} = \varphi_n^{(k)} \quad  
\mbox{for} \quad  1 \leq k \leq n-1
$$  
\end{theorem}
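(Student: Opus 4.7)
The plan is to verify the homotopy identity
$$
s_n^{(k+1)}\circ \delta + d\circ s_n^{(k)} = \varphi_n^{(k)}
$$
pointwise on a typical generator $\alpha = (2\pi i)^{n-k-1}\otimes f_0 \wedge \ldots \wedge f_k$ of $\Lambda^{k+1}{\cal O}(n-k-1)$, and then argue the general case by linearity. Since $\varphi_n^{(k)}$ is built from a primitive $d^{-1}\circ \pi_n$ of a top form and $s_n^{(k)}$ is built from exactly the same primitive $\widetilde r_{k-1}$ with an extra $\mathrm{Im}\,f_0$ factor, the identity should reduce to a straightforward Leibniz computation once the combinatorial factors are tracked.

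First I would compute $d\circ s_n^{(k)}(\alpha)$. Applying Leibniz inside the $\mathrm{Alt}_{k+1}$, the differential splits into two pieces: one where $d$ hits $\mathrm{Im}\,f_0$, producing $d\mathrm{Im}\,f_0 \wedge \widetilde r_{k-1}(f_1\wedge\ldots\wedge f_k)$; and one where $d$ hits $\widetilde r_{k-1}$, producing by the very definition of $\widetilde r_{k-1}$ as $d^{-1}\pi_n(df_1\wedge\ldots\wedge df_k)$ the term $\mathrm{Im}\,f_0 \cdot \pi_n(df_1\wedge\ldots\wedge df_k)$. After alternation these two pieces are precisely the two ``halves'' of $\varphi_n^{(k)}(\alpha)$ that correspond to $d^{-1}(df_0\wedge\ldots\wedge df_k)$ being split by the index on which the $f_i$ (rather than $df_i$) appears.

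Next I would compute $s_n^{(k+1)}\circ \delta(\alpha) = s_n^{(k+1)}\bigl((2\pi i)^{n-k-2}\otimes 2\pi i \wedge f_0\wedge\ldots\wedge f_k\bigr)$. Here the crucial observation is property~(\ref{CHOISE}): since $\widetilde r_k$ vanishes on any wedge containing a $2\pi i$ factor (a consequence of (\ref{CHOISE}) combined with the alternating nature of the construction of $\widetilde r_k$), the only surviving terms in $\mathrm{Alt}_{k+2}$ are those placing $2\pi i$ in the ``$\mathrm{Im}\,f_0$'' slot. This collapses the sum to a clean single contribution of the form $2\pi \cdot \widetilde r_k(f_0\wedge\ldots\wedge f_k)$ with the correct combinatorial factor. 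Equivalently this term supplies precisely the ``missing'' piece of $\varphi_n^{(k)}(\alpha)$ coming from the index $i=0$ in $\sum_{i=0}^k(-1)^i f_i\, df_0\wedge\ldots\widehat{df_i}\ldots\wedge df_k$, which is absent in the Leibniz expansion of $d\circ s_n^{(k)}$ because $\mathrm{Im}\,f_0$ appears only as a scalar factor there.

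The main obstacle will be the bookkeeping of the coefficients $c_{j,n}=1/((2j+1)!(n-2j-1)!)$ in $\widetilde r_{n-1}$, the symmetrization factors from $\mathrm{Alt}_{k+1}$ and $\mathrm{Alt}_{k+2}$, and the overall $1/n!$ normalization. The verification reduces to a Pascal-type identity among these coefficients, reflecting the elementary fact that, when one projects $df_0\wedge\ldots\wedge df_k$ to $\R(n-1)$ via $\pi_n$, the primitive operation $d^{-1}$ distributes over the decomposition $f_i = \mathrm{Re}\,f_i + i\,\mathrm{Im}\,f_i$ in exactly the way the Leibniz expansion of $\mathrm{Im}\,f_0 \cdot \widetilde r_{k-1}$ together with the single $2\pi i$-clean term predict. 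Our choice of primitive in the definition of $\widetilde r_{n-1}$ was engineered precisely so that (\ref{CHOISE}) holds and these coefficient identities close up, with no leftover boundary contributions in the range $1\leq k\leq n-1$.
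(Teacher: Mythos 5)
Your high‑level strategy—kill most of $s_n^{(k+1)}\circ\delta$ with property~(\ref{CHOISE}) so only the ``$2\pi i$ in the $\mathrm{Im}f_0$ slot'' terms survive, and expand $d\circ s_n^{(k)}$ by Leibniz—is exactly what the paper does. But the proposal has two concrete problems, and the second one is where essentially all the actual work of the proof lives.

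First, a factual error: you take $d\,\widetilde r_{k-1}(f_1\wedge\ldots\wedge f_k)=\pi_n(df_1\wedge\ldots\wedge df_k)$, but by the definition $\widetilde r_{m-1}=d^{-1}\circ\pi_m$ on $m$ inputs, the correct identity is $d\,\widetilde r_{k-1}(f_1\wedge\ldots\wedge f_k)=\pi_k(df_1\wedge\ldots\wedge df_k)$. The index on $\pi$ tracks the \emph{number of inputs}, not the global weight $n$; the passage between $\pi_n$ and $\pi_k$ is mediated by the factor $(2\pi i)^{n-k}$ in front, which is real or purely imaginary according to the parity of $n-k$. Glossing over this parity bookkeeping makes your claim that the two Leibniz pieces are ``precisely the two halves of $\varphi_n^{(k)}(\alpha)$'' unjustified: one of your pieces is $\mathrm{Im}\,f_0\cdot\pi_k(df_1\wedge\ldots\wedge df_k)$, which involves only $\mathrm{Im}\,f_0$ and the wrong projection, and is not in any naive sense a subterm of $\pi_n\bigl(\sum_i\pm f_i\,df_0\wedge\ldots\widehat{df_i}\ldots\wedge df_k\bigr)$.

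Second, and more seriously, after normalising out $(2\pi i)^{n-k}$ and the factorials the identity you must still prove is exactly the paper's ``basic identity''
$$
d\Bigl(\sum_{i}(-1)^{i-1}\mathrm{Im}\,f_i\cdot\widetilde r_{k-2}(f_1\wedge\ldots\widehat{f_i}\ldots\wedge f_k)\Bigr)
+k\cdot\widetilde r_{k-1}(f_1\wedge\ldots\wedge f_k)
=\pi_k\Bigl(\sum_i(-1)^{i-1}f_i\,df_1\wedge\ldots\widehat{df_i}\ldots\wedge df_k\Bigr),
$$
equivalently $k\bigl(\pi_k\circ d^{-1}-d^{-1}\circ\pi_k\bigr)=d\bigl(\sum_i(-1)^{i-1}\mathrm{Im}\,f_i\cdot\widetilde r_{k-2}(\ldots)\bigr)$. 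This is not a ``straightforward Leibniz computation'' or a one‑line Pascal identity; the paper proves it by an induction on $k$ which separately matches the $d\,\mathrm{Im}f_k$‑containing and $d\,\mathrm{Im}f_k$‑free parts of both sides, and the base case $k=2$ and the inductive step each require a genuine calculation with the explicit coefficients $c_{j,m}$. Your proposal asserts that ``the coefficient identities close up'' because the primitive was ``engineered'' for that purpose, but this is precisely the statement to be proved, not a consequence of the setup. Without the induction, the proof is incomplete.
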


\begin{proof}
Let us prove the statement for the   diagram 
$$
\begin{array}{ccccc}
&&\Lambda^k {\cal O}(n-k)& \stackrel{\delta}{\lra}&\Lambda^{k+1} {\cal O}(n-k-1)\\
&&&&\\
&\swarrow s_n^{(k-1)}&\downarrow \varphi_n^{(k-1)}&\swarrow s_n^{(k)}&\\
&&&&\\
S^{ k-2 }(n-1)& \stackrel{d}{\lra}&S^{ k-1}(n-1)&&
\end{array}
$$

Thank to (\ref{CHOISE}),  for $k \leq n-1$  
one has 
\be\la{WHOKNW1}
 \begin{split}
&s_n^{(k )}\circ  \delta \Bigl( (2 \pi i)^{n-k} \otimes f_1 \wedge ... \wedge f_k\Bigr)= 
\\&s_n^{(k)} \Bigl( (2 \pi i)^{n-k-1} \otimes 2 \pi i \wedge f_1 \wedge ... \wedge f_k\Bigr)= 
\\&k!~\frac{(2 \pi i)^{n-k}}{n!}\widetilde r_{k-1}(f_1 \wedge ... \wedge f_k).
\end{split}
\ee
It is easy to see that the same result is valid also for $k=n$. 
On the other hand 
\be \la{WHOKNW}
 \begin{split}
 &d\circ s_n^{(k )}   \Bigl( (2 \pi i)^{n-k} \otimes f_1 \wedge ... \wedge f_k\Bigr)=\\
&d\frac{  (2 \pi i)^{n-k} }{n! }  {\rm Alt}_{k }\Bigl({\rm Im}f_1 \cdot \widetilde r_{k-2}(f_2 \wedge ... \wedge f_k)\Bigr) = \\
 &(k-1)!~\frac{  (2 \pi i)^{n-k} }{n!}  \cdot d\Bigl( \sum_{i=1}^k(-1)^{i-1}{\rm Im}f_i \cdot \widetilde r_{k-2}(f_1\wedge ... \wedge \widehat f_i \wedge ... \wedge f_k)\Bigr).
\end{split}
\ee

Putting together (\ref{FWKMOF}), (\ref{WHOKNW1}) and (\ref{WHOKNW}),  
and dividing by $(k-1)!$, the statement reduces to the following  
basic identity:
\be
 \begin{split}
&d\Bigl( \sum_{i=1}^k(-1)^{i-1}{\rm Im}f_i \cdot \widetilde r_{k-2}(f_1\wedge ... \wedge \widehat f_i \wedge ... \wedge f_k)\Bigr) +
k \cdot \widetilde r_{k-1}(f_1\wedge ... \wedge f_k) =\\
&\pi_k\Bigl( \sum_{i=1}^k(-1)^{i-1} f_i \cdot df_1\wedge ... 
\wedge\widehat df_i \wedge ... \wedge df_k\Bigr).
\end{split}
\ee
We can rewrite it in its natural form:
\be
\begin{split}
&k \cdot \Bigl(\pi_k \circ d^{-1} - d^{-1} \circ \pi_k\Bigr) (f_1\wedge ... \wedge f_k) =\\ 
&d \Bigl( \sum_{i=1}^k(-1)^{i-1}{\rm Im}f_i \cdot \widetilde r_{k-2}(f_1\wedge ... \wedge \widehat f_i \wedge ... \wedge f_k \Bigr).
\end{split}
\ee

{\it Proof of the basic identity}. We  need the following simple observation:
$$
\widetilde  r_{k-1 }(f_1\wedge ...   \wedge f_k) =    
 \widetilde  r_{k-2 }(f_1\wedge   ... \wedge  f_{k-1}) \wedge d{\rm Im}f_k ~~+ ~~ \mbox{terms without $d {\rm Im} f_k$}.
$$
 
We prove the basic identity by induction. 
Let $k=2$. Then it boils down to  
$$
d\Bigl( {\rm Im} f_1 {\rm Re} f_2 -  {\rm Im} f_2 {\rm Re} f_1 \Bigr) + 
2 \Bigl({\rm Re} f_1 d {\rm Im} f_2 -  {\rm Re} f_2 d {\rm Im} f_1\Bigr) 
$$
$$
{\rm Re} f_1 d {\rm Im} f_2 -  {\rm Re} f_2 d {\rm Im} f_1 + {\rm Im} f_1 d {\rm Re} f_2 -  {\rm Im} f_2 d {\rm Re} f_1,
$$
which is easy to check. 

Let us assume that the identity was already proved for $k-1$. 
We compute  first the parts of each 
of 
the  sides containing the term $d {\rm Im} f_k$.  The contribution of the right hand side is
$$
\pi_{k-1}\Bigl(  \sum_{i=1}^{k-1}(-1)^{i-1} f_i \cdot df_1\wedge ... \wedge \widehat df_i \wedge ... \wedge df_k \Bigr) \wedge d {\rm Im} f_k.
$$
By the induction assumption this is equal to
$$
\Bigl(d  \sum_{i=1}^{k-1}(-1)^{i-1}  {\rm Im}f_i \cdot \widetilde r_{k-3}(f_1\wedge ... \wedge \widehat f_i \wedge ... \wedge  f_{k-1})  + (k-1)  \widetilde r_{k-2}(f_1\wedge   ... \wedge  f_{k-1})\Bigr)  \wedge d {\rm Im} f_k.
$$
We have to show that this expression is equal to the $d {\rm Im} f_k
$-content of the left hand side of the basic equality, i.e. to 
$$
 - \widetilde r_{k-2}(f_1\wedge   ... \wedge  f_{k-1})  \wedge d {\rm Im} f_k
+ k   \widetilde r_{k-2}(f_1\wedge   ... \wedge  f_{k-1})  \wedge d {\rm Im} f_k+
$$
$$
\Bigl( \sum_{i=1}^{k-1}(-1)^{i-1} d\widetilde r_{k-2}(f_1\wedge ... \wedge \widehat f_i \wedge ... \wedge  f_{k -1})  {\rm Im} f_i\Bigr) \wedge d {\rm Im} f_k.
$$
This is obvious. 
It remains to check that the $d {\rm Im} f_k$-free parts of the basic equality also coincide. The right hand side gives us
\begin{equation} \label{dwa}
 \sum_{i=1}^k(-1)^{i-1} \pi_k(f_i) (d{\rm Re}f_1\wedge ... \wedge \widehat d{\rm Re}f_i \wedge ... \wedge  d{\rm Re}f_{k }).
 \end{equation}
Let us assume first that $k$ is odd. Then the  left hand side is 
$$
\sum_{i=1}^k(-1)^{i-1} {\rm Re} f_i  (d{\rm Re}f_1\wedge ... \wedge \widehat d{\rm Re}f_i \wedge ... \wedge  d{\rm Re}f_{k }), 
$$
which coincides with (\ref{dwa}) since $ \pi_k(f_i) = {\rm Re} f_i$ if $k$ is odd. If $k$ is even the first term contributes
$$
\sum_{i=1}^k(-1)^{i-1} {\rm Im} f_i \pi_{k-1}(f_1\wedge ... \wedge \widehat f_i \wedge ... \wedge  f_{k }),
$$
which coincides with (\ref{dwa}) since ${\rm Im} f_i = \pi_k(f_i)$ in this case. \end{proof}


\begin{thebibliography}{BGSV}

\bibitem[A82]{A82} Atiyah M.: {\it Convexity and commuting hamiltonians.} 1982. 
Michale Atiyah Collected papers, Vol 5. 

\bibitem[B84]{B84} Beilinson A.A.: {\it Higher regulators and values of L-functions}. 
VINITI 24, (1984). 181-238. J. Soviet Math. 30 (1985) 2036-2070. 

\bibitem[B87]{B87} Beilinson A.A.: {\it Height pairings between algebraic cycles} Lect. Notes in Math 1289, 
(1987), 1-26. 

\bibitem[BD]{BD} Beilinson A.A., Deligne P.: {\it Interpretation motivique de la conjecture de Zagier reliant polylogarithms et regulateurs.} Proc. Proc. of Symposia in Pure Math., Volume 55, Part 2. 97-122.

\bibitem[BMS]{BMS} Beilinson A.A., MacPherson R., Schechtman, 
V.V: {\it Notes on motivic cohomology}, Duke Math.\ J.\ 54 (1987), 
679--710. 

 \bibitem[BGSV]{BGSV} Beilinson A.A., Goncharov A.B., Schechtman, 
V.V, Varchenko A.N.: {\it Aomoto dilogarithms, mixed Hodge structures and motivic cohomology of pairs of triangles on the plane}.  
The Grothendieck Festschrift, Vol. I,  135--172, Progr. Math., 86, Birkhäuser Boston, Boston, MA, 1990. 

\bibitem[B74]{B74} Bloch S.: {\it $K_2$ and algebraic cycles.} Annals of Math. 99 (1974) 349-379

\bibitem[B77]{B77} Bloch S.: {\it Applications of the dilogarithm function in algebraic K-theory and algebraic geometry.} 
Proc. Int. Symposium on Algebraic Geometry, Kyoto, 1977. Kinokuniyo Book Store Ltd., Tokyo, Japan. 

\bibitem[B78]{B78} Bloch S.: {\it Higher regulators, Algebraic K-theory, and Zeta functions of elliptic curves.} 
Irvine lecture notes. CRM Monograph series, AMS 2000. Original preprint of 1978.  

\bibitem[BK]{BK} Bloch S., Kriz I.: {Mixed Tate motives.} Ann of Math. 140, (1994), 557-605.

\bibitem[DS82]{DS82} Dupont J., Sah S-H.: {\it Scissor congruences II}. J. Pure Appl. Algebra 25 (1982), 159-195.

\bibitem[FG1]{FG1} Fock V.V., Goncharov A.B.: 
{\it Moduli spaces of local systems and Higher Teichmuller theory.} Publ. Math. 
IHES, n. 103 (2006) 1-212. ~ArXiv math.AG/0311149. 


\bibitem[FG2]{FG2} Fock V.V., Goncharov A.B.: 
{\it The quantum dilogarithm and representations of 
quantum cluster varieties.} 175, 
Inventiones Math. (2009) 223-286. arXiv:math/0702397.  



\bibitem[GGL]{GGL} Gabrielov A., Gelfand I.M., Losik M.: 
{\it Combinatorial computation of characteristic classes} I, II,
Funct. Anal. i ego pril. 9 (1975) 12-28, ibid 9 (1975) 5-26. MR0410758 (53 14504a). 



\bibitem[GM82]{GM82} Gelfand I.M., MacPherson R: {\it Geometry in 
Grassmannians and a generalisation of the dilogarithm}, Adv. 
in Math., 44 (1982) 279--312. MR0658730 (84b:57014).


\bibitem[G91]{G91} Goncharov A.B.: {\it Geometry of configurations, polylogarithms, and motivic cohomology}.
Adv. Math. 114 (1995), no. 2, 197â318. 

\bibitem[G93]{G93} Goncharov, A. B. {\it Explicit construction of characteristic classes}. 
{Advances in Soviet Mathematics,} 16, v.\ 1, 
    Special volume dedicated to 
    I.M.Gelfand's 80th birthday, 169 - 210, 1993.

\bibitem[G95]{G95} Goncharov A.B.: {\it Chow polylogarithms and
regulators}. Math. Res. Letters, 2, (1995), 99-114. MR1312980 (96b:19007). 

\bibitem[G95a]{G95a} Goncharov A.B.: {\it Polylogarithms and motivic Galois groups}. 
Proc. of Symposia in Pure Math., Volume 55, Part 2. pp. 43-96.


\bibitem[G95b]{G95b} Goncharov A.B.: {\it Deninger's conjecture on $L$-function of elliptic curve at $s=3$}. ArXiv:alg-geom/9512016.

\bibitem[G96]{G96} Goncharov, A. B. {\it Volumes of hyperbolic manifolds and mixed Tate motives} 
JAMS vol. 12, N2, (1999), 569-618. arXiv:alg-geom/9601021.

\bibitem[G00]{G00} Goncharov, A. B. {\it Geometry of the trilogarithm and the motivic Lie algebra of a field}.  Regulators in analysis, geometry and number theory,  127--165, 
Progr. Math., 171, Birkhäuser Boston, Boston, MA, 2000.

\bibitem[G02]{G02} Goncharov, A. B. 
{\it Polylogarithms, regulators and Arakelov motivic complexes.} JAMS  
J. Amer. Math. Soc.  18  (2005),  no. 1, 1--60. arXiv:math/0207036.


\bibitem[G08]{G08} Goncharov, A. B. {\it Hodge correlators} ArXive:math/0803.0297.

\bibitem[G11]{G11} Goncharov, A. B. {\it A simple construction of Grassmannian polylogarithms.}
Adv. in Math 2014. arXive 

\bibitem[HM90]{HM90} Hain R, MacPherson R: {\it Higher Logarithms}, 
Ill. J.\ of Math,, vol. 34, (1990) N2,  392--475. MR1046570 (92c:14016). 


\bibitem[HM93]{HM93} Hanamura M, MacPherson R: {\it Geometric construction
of polylogarithms}, Duke Math. J. 70 ( 1993)481-516. MR1224097 (94h:19005). 




\bibitem[HM96]{HM96} Hanamura M,  MacPherson R.: 
{\it Geometric construction of polylogarithms, II}, 
Progress in Math. vol. 132 (1996) 215-282. MR1389020 (97g:19007). 


\bibitem[M76]{M76} MacPherson R.: {\it Gabrielov, Gelfand, Losic
combinatorial formula for the first Pontryagin class} Seminar Bourbaki 1976. MR0521763 (81a:57022).

\bibitem[Mi71]{Mi71} Milnor J.: {\it Introduction to algebraic K-theory}. 
Annals of mathematical studies 72, Princeton University Press. 1971.

\bibitem[Su82]{Su82} Suslin A.A: {\it K-theory of a field and the Bloch group}, 
Proc. Steklov Inst. Math. 4 (1991) 217 239. 


\bibitem[Su84]{Su84} Suslin A.A: {\it Homology of $GL_{n}$,
characteristic classes and Milnor's $K$-theory.} Trudy Mat. Inst. Steklov, 
165 (1984), 188-204;


\bibitem[Yu81]{Yu81} Yusin, Boris V.: {\it  Sur les formes $S\sp{p,q}$ apparaissant dans le calcul combinatoire de la deuxi\'eme classe de Pontriaguine par la m\'ethode de 
Gabrielov, Gelfand et Losik}. (French)  C. R. Acad. Sci. Paris Sér. I Math.  292  (1981), no. 13, 641--644. 57R20

\bibitem[Za]{Za} Zagier D.: {\it Polylogarithms, zeta-functions, and algebraic K-theory of fields.} 
Progress Math Vol 89. Birkhauser, Boston, MA, 1991, pp. 392-430. 
\end{thebibliography}
\end{document}